\theoremstyle{plain}
\numberwithin{equation}{section}
\theoremstyle{plain}
\newtheorem{theorem}{Theorem}[section]
\newtheorem{corollary}[theorem]{Corollary}
\newtheorem{lemma}[theorem]{Lemma}
\newtheorem{proposition}[theorem]{Proposition}
\theoremstyle{definition}
\newtheorem{example}[theorem]{Example}
\newtheorem{remark}[theorem]{Remark}
\newcommand{\ands}{\quad\mbox{and}\quad}
\newcommand{\kernel}{{\mathrm{Ker}}}
\newcommand{\Dom}{{\mathrm{Dom}}}
\newcommand{\codim}{{\mathrm{codim}\, }}
\newcommand{\Index}{{\mathrm{Index}}}
\newcommand{\Rat}{{\mathrm{Rat}}}
\newcommand{\Ran}{{\mathrm{Ran}}}
\newcommand{\cP}{{\mathcal P}}
\newcommand{\cQ}{{\mathcal Q}}
\newcommand{\cW}{{\mathcal W}}
\newcommand{\cR}{{\mathcal R}}
\newcommand{\BC}{{\mathbb C}}
\newcommand{\BT}{{\mathbb T}}
\newcommand{\BD}{{\mathbb D}}
\newcommand{\BP}{{\mathbb P}}
\newcommand{\BZ}{{\mathbb Z}}
\newcommand{\wtil}[1]{{\widetilde{#1}}}
\newcommand{\what}[1]{{\widehat{#1}}}
\newcommand{\al}{\alpha}
\newcommand{\om}{\omega}
\begin{document}

\title[A Toeplitz-like operator with rational symbol having poles on the unit circle I]{A Toeplitz-like operator with rational symbol having poles on the unit circle I: Fredholm properties}

\author[G.J. Groenewald]{G.J. Groenewald}
\address{G.J. Groenewald, Department of Mathematics, Unit for BMI, North-West
University,
Potchefstroom, 2531 South Africa}
\email{Gilbert.Groenewald@nwu.ac.za}

\author[S. ter Horst]{S. ter Horst}
\address{S. ter Horst, Department of Mathematics, Unit for BMI, North-West
University, Potchefstroom, 2531 South Africa}
\email{Sanne.TerHorst@nwu.ac.za}

\author[J. Jaftha]{J. Jaftha}
\address{J. Jaftha, Numeracy Center, University of Cape Town, Rondebosch 7701; Cape Town; South Africa}
\email{Jacob.Jaftha@uct.ac.za}

\author[A.C.M. Ran]{A.C.M. Ran}
\address{A.C.M. Ran, Department of Mathematics, Faculty of Science, VU Amsterdam, De Boelelaan 1081a, 1081 HV Amsterdam, The Netherlands and Unit for BMI, North-West~University, Potchefstroom, South Africa}
\email{a.c.m.ran@vu.nl}

\thanks{This work is based on the research supported in part by the National Research Foundation of South Africa (Grant Number 90670 and 93406).\\
Part of the research was done during a sabbatical of the third author, in which time several research visits to VU Amsterdam and North-West University were made. Support from University of Cape Town and the Department of Mathematics, VU Amsterdam is gratefully acknowledged.
}

\begin{abstract}
In this paper a definition is given for an unbounded Toeplitz-like operator with rational symbol which has poles on the unit circle. It is shown that the operator is Fredholm if and only if the symbol has no zeroes on the unit circle, and a formula for the index is given as well. Finally, a matrix representation of the operator is discussed.
\end{abstract}

\subjclass[2010]{Primary 47B35, 47A53; Secondary 47A68}

\keywords{Toeplitz operators, unbounded operators, Fredholm properties}

\maketitle


\section{Introduction}\label{S:Intro}


The Toeplitz operator $T_\om$ on $H^p=H^p(\BD)$, $1 < p<\infty$, over the unit disc $\BD$ with rational symbol $\om$ having no poles on the unit circle $\BT$ is the bounded linear operator defined by
\[
T_\omega:H^p\to H^p,\quad T_\omega f = \BP \om f \ (f\in H^p),
\]
with $\BP$ the Riesz projection of $L^p=L^p(\BT)$ onto $H^p$. This operator, and many of its variations, has been extensively studied in the literature,  cf., \cite{BS06,DG02,GGK90,V08} and the references given there.

In this paper the case where
$\om$ is allowed to have poles on the unit circle is considered. Let $\Rat$ denote the space of rational complex functions, and $\Rat_0$ the subspace of strictly proper rational complex functions. We will also need the subspaces $\Rat(\mathbb{T})$ and $\Rat_0(\mathbb{T})$ of $\Rat$ consisting of the rational functions in $\Rat$ with all poles on $\BT$ and the strictly proper rational functions in $\Rat$ with all poles on $\BT$, respectively. For $\om \in \Rat$, possibly having poles on $\BT$, we define a Toeplitz-like operator $T_\omega (H^p \rightarrow H^p)$, for $1 < p<\infty$, as follows:
\begin{equation}\label{Toeplitz}
\Dom(T_\omega)\!=\! \left\{ g\in H^p \! \mid \! \omega g = f + \rho \mbox{ with } f\!\in\! L^p\!\!,\, \rho \!\in\!\textup{Rat}_0(\mathbb{T})\right\},\
T_\omega g = \mathbb{P}f.
\end{equation}
Note that in case $\om$ has no poles on $\BT$, then $\om\in L^\infty$ and the Toeplitz-like operator $T_\om$ defined above coincides with the classical Toeplitz operator $T_\om$ on $H^p$. In general, for $\om\in\Rat$, the operator $T_\om$ is a well-defined, closed, densely defined linear operator. By the Euclidean division algorithm, one easily verifies that all polynomials are contained in $\Dom(T_\om)$.  Moreover, it can be verified that  $\Dom(T_\om)$ is invariant under the forward shift operator $T_z$ and that the following classical result holds:
 $$T_{z^{-1}}T_\om T_z f = T_\om f, \qquad f\in\Dom(T_\om).$$
These basic properties are derived in Section \ref{S:Basic}.

This definition is somewhat different from earlier definitions of unbounded Toeplitz-like operators, as discussed in more detail in a separate part, later in this introduction. The fact that all polynomials are contained in $\Dom(T_\om)$, which is not the case in several of the definitions in earlier publications, enables us to determine a matrix representation with respect to the standard basis of $H^p$ and derive results on the convergence behaviour of the matrix entries; see Theorem \ref{T:Mainthm2} below.

In this paper we are specifically interested in the Fredholm properties of $T_\om$. For the case that $\om$ has no poles on $\BT$, when $T_\om$ is a classical Toeplitz operator, the operator $T_\om$ is Fredholm if and only if $\om$ has no zeroes on $\BT$, a result of R. Douglas; cf., Theorem 2.65 in \cite{BS06} and Theorem 10 in \cite{V03}. This result remains true in case $\om\in\Rat$.
We use the standard definitions of Fredholmness and Fredholm index for an unbounded operator, as
given in \cite{G66}, Section IV.2: a closed linear operator which has a finite dimensional kernel
and for which the range has a finite dimensional complement is called a Fredholm operator, and the index is defined by the difference of the dimension of the kernel and the dimension of the complement of the range.  Note that a closed Fredholm operator in a Banach space necessarily has a closed range (\cite{G66}, Corollary IV.1.13).The main results on unbounded Fredholm operators can be found in \cite{G66}, Chapters IV and V.

\begin{theorem}\label{T:Mainthm1}
Let $\omega\in\Rat$. Then $T_\omega$ is Fredholm if and only if $\omega$ has no zeroes on $\mathbb{T}$.
Moreover, in that case the index of $T_\om$ is given by
\[
\Index(T_\om)=
\sharp \left\{\begin{array}{l}\!\!\!
 \textrm{poles of } \om \textrm{ in } \overline{\BD} \textrm{ multi.}\!\!\! \\
\!\!\!\textrm{taken into account}\!\!\!
\end{array}\right\}  -
\sharp \left\{\begin{array}{l}\!\!\! \textrm{zeroes of } \om \textrm{ in } \BD  \textrm{ multi.}\!\!\! \\
\!\!\!\textrm{taken into account}\!\!\!
\end{array}\right\} .
\]
\end{theorem}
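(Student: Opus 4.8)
The plan is to reduce the general case $\om\in\Rat$ to a coprime factorization and then analyze the two pieces separately. First I would write $\om = s/q$ with $s,q$ polynomials having no common zeros, and further split $q = q_- q_0 q_+$, where $q_-$ has all its zeros inside $\BD$, $q_0$ has all its zeros on $\BT$, and $q_+$ has all its zeros outside $\overline{\BD}$; similarly factor the numerator $s = s_- s_0 s_+$ according to the location of its zeros. The claim "$T_\om$ Fredholm $\iff$ $\om$ has no zeros on $\BT$" splits into two implications. For the ``only if'' direction, I would show that if $s_0$ is nonconstant, then $T_\om$ fails to be Fredholm; the natural way is to exhibit either an infinite-dimensional kernel or a non-closed range, using that a zero of $\om$ on $\BT$ forces the kind of degeneracy one sees already for the classical unbounded operator $T_{z-\la}$ with $|\la|=1$. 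For the ``if'' direction, assuming $s$ has no zeros on $\BT$, I would prove Fredholmness and compute the index.

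The key structural step is a \emph{factorization of the operator}. Since $\Dom(T_\om)$ is invariant under $T_z$ and intertwines as $T_{z^{-1}}T_\om T_z = T_\om$ on the domain (stated in the excerpt), I expect an analogous multiplicativity of the form $T_{\om_1 \om_2} = T_{\om_1} T_{\om_2}$ on appropriate domains when, say, $\om_2$ is analytic and bounded (so $T_{\om_2}$ is an honest bounded Toeplitz operator) or when $\om_1 \in H^\infty$. Using this I would like to peel off the ``good'' bounded/invertible factors: the part of $\om$ with poles and zeros strictly outside $\overline{\BD}$ contributes an invertible bounded Toeplitz operator (index $0$), the part with zeros inside $\BD$ and no poles contributes a classical Fredholm Toeplitz operator of index $-\sharp\{\text{zeros in }\BD\}$, and the genuinely unbounded part, carrying the poles in $\overline{\BD}$, must be shown to be Fredholm with index $\sharp\{\text{poles in }\overline{\BD}\}$. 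For the model unbounded piece $T_{1/q_-}$ (poles inside $\BD$) one expects the range to be all of $H^p$ and the kernel trivial after accounting for the polynomial freedom, while for $T_{1/q_0}$ (poles on $\BT$) a direct computation with $\Dom(T_{1/q_0})$ should show the index equals $\deg q_0$, the co-restriction $\rho\in\Rat_0(\BT)$ in the definition being exactly what absorbs the boundary poles.

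Concretely, for the index count I would reduce to elementary factors $z-\la$ and $(z-\la)^{-1}$ by the multiplicativity above, reducing the computation of $\Index(T_\om)$ to a sum over the zeros and poles of $\om$ of the indices of these elementary operators: $\Index(T_{z-\la}) = 0$ for $|\la|<1$ wait --- rather $\Index(T_{z-\la})=-1$ for $|\la|<1$, $=0$ for $|\la|>1$, and $\Index(T_{(z-\la)^{-1}})=+1$ for $|\la|\le 1$, $=0$ for $|\la|>1$; summing gives exactly the stated formula $\sharp\{\text{poles in }\overline{\BD}\} - \sharp\{\text{zeros in }\BD\}$. The main obstacle I anticipate is making the operator multiplicativity rigorous across the unbounded/bounded divide: one must be careful about the domains, check that the composite domains coincide with $\Dom(T_\om)$ for the relevant factorizations (this is where the precise role of $\Rat_0(\BT)$ in \eqref{Toeplitz} enters), and verify that the intermediate ``partial'' operators are themselves closed. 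A secondary but real difficulty is the boundary case $|\la|=1$: proving that $T_{z-\la}$ has closed range of finite codimension (rather than dense non-closed range) and that $T_{(z-\la)^{-1}}$ has the complementary behavior requires an explicit description of these ranges, and this is the technical heart on which the ``only if'' direction and the index at boundary poles both rest.
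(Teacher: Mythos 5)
Your plan follows essentially the same route as the paper's proof: a Wiener--Hopf type factorization of the symbol according to the location of its zeroes and poles, an operator identity $T_\om = T_{\om_-}T_{z^\kappa\om_0}T_{\om_+}$ peeling off boundedly invertible factors, reduction to the factor carrying the poles and zeroes on $\BT$, and an explicit description of kernel and range (hence index) for that factor via division with remainder in $H^p$. Your elementary index counts and the resulting bookkeeping agree with the paper's formula, and you correctly locate the technical heart where the paper puts it: showing that a zero on $\BT$ forces a non-closed range, and that a pole of order $m$ on $\BT$ contributes $+m$ to the index.

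One step in your outline is wrong as stated and would need repair. You invoke $T_{\om_1\om_2}=T_{\om_1}T_{\om_2}$ ``when $\om_2$ is analytic and bounded \dots\ or when $\om_1\in H^\infty$.'' The second condition is false: the classical multiplicativity $T_{abc}=T_aT_cT_b$ requires $\overline{a}\in H^\infty$ (left factor co-analytic) and $b\in H^\infty$ (right factor analytic); already $T_zT_{\bar z}\neq T_1$. In particular you cannot peel the factor $s_-$ (zeroes inside $\BD$) off on the \emph{left} as an $H^\infty$ symbol. The paper's Lemma \ref{L:factor} is arranged precisely to avoid this: the zeroes and poles inside $\BD$ are combined into $z^\kappa\om_-(z)$ with $\om_-$ a rational function of $z^{-1}$, analytic and invertible outside $\overline{\BD}$ including infinity, so that $\overline{\om_-}\in H^\infty$ and $\om_-$ legitimately sits on the left, while $\om_+$ sits on the right and the monomial $z^\kappa$ stays with the boundary factor; for $\kappa<0$ an additional argument (Lemma \ref{L:KappaNeg}) shows $T_{z^\kappa\om_0}=T_{z^\kappa}T_{\om_0}$ and that $T_{z^\kappa}T_{\om_0}-T_{\om_0}T_{z^\kappa}$ is bounded of finite rank, which is what allows the index to add. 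A fallback for your finer elementary-factor decomposition would be to show that each failed multiplicativity differs from the true product by a finite-rank perturbation, but that is extra work your outline does not supply.
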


It should be noted that when we talk of poles and zeroes of $\om$ these do not include the poles or zeroes at infinity.

The result of Theorem \ref{T:Mainthm1} may also be expressed in terms of the winding number as follows: $\Index(T_\om)=-\lim_{r\downarrow 1} {\rm wind\, } (\om|r\BT)$. In the case where $\om$ is continuous on the unit circle and has no zeroes there, it is well-known that the index of the
Fredholm operator $T_\om$ is given by the negative of the winding number of the curve $\om(\BT)$ with respect to zero (see, e.g., \cite{BS06}, or  \cite{GGK02}, Theorem XVI.2.4). However, if $\om$ has poles on the unit circle, the limit
$\lim_{r\downarrow 1}$ cannot be replaced by either $\lim_{r\to 1}$ or $\lim_{r\uparrow 1}$ in this formula.

The proof of Theorem \ref{T:Mainthm1} is given in Section \ref{S:Fredholm2}. It relies heavily on the following analogue of Wiener-Hopf factorization given in Lemma \ref{L:factor}:
for $\om\in\Rat$ we can write
$\om(z)=\om_-(z)(z^\kappa \om_0(z))\om_+(z)$ where $\kappa$ is the difference between the number of zeroes of $\om$ in $\BD$ and the number of poles of $\om$ in $\BD$, $\om_-$ has no poles or zeroes outside $\BD$, $\om_+$ has no poles or zeroes inside $\overline{\BD}$ and
$\om_0$ has all its poles and zeroes on $\BT$. Based on the choice of the domain as in \eqref{Toeplitz} it can then be shown that $T_\om=T_{\om_-}T_{z^\kappa\om_0}T_{\om_+}$.
This factorization eventually allows to reduce the proof of Theorem \ref{T:Mainthm1} to the case where $\om$ has only poles on $\BT$. It also allows us to characterize invertibility of $T_\om$ and to give a formula for the inverse of $T_\om$
in case it exists.

If $\om$ has only poles on $\BT$, i.e., $\om\in\Rat(\BT)$, then we have a more complete description of $T_\om$ in case it is a Fredholm operator. Here and in the remainder of the paper, we let $\cP$ denote the space of complex polynomials in $z$, i.e., $\cP=\BC[z]$, and $\cP_n\subset\cP$ the subspace of polynomials of degree at most $n$.

\begin{theorem}\label{T:Mainthm1a}
Let $\omega\in\Rat(\BT)$, say $\om=s/q$ with $s,q\in\cP$ co-prime. Then $T_\om$ is Fredholm if and only if $\om$ has no zeroes on $\BT$. Assume $s$ has no roots on $\BT$ and factor $s$ as $s=s_- s_+$ with $s_-$ and $s_+$ having roots only inside and outside $\BT$, respectively. Then
\begin{equation}\label{DRK}
\begin{aligned}
& \Dom(T_\om) = qH^p+\cP_{\deg(q)-1},\quad \Ran(T_\om)=  s H^p+\wtil\cP,\\
&\kernel (T_\omega) = \left\{\frac{r_0}{s_+} \mid \deg(r_0) < \deg(q) - \deg(s_-) \right\}.
\end{aligned}
\end{equation}
Here $\wtil\cP$ is the subspace of $\cP_{\deg(s)-1}$ given by
\[
\wtil\cP = \{ r\in\cP \mid r q = r_1 s + r_2 \mbox{ for } r_1,r_2\in\mathcal{P}_{\deg(q)-1}\}.
\]
Moreover, a complement of $\Ran(T_\om)$ in $H^p$ is given by $\cP_{\deg(s_-)-\deg(q)-1}$ \textup{(}to be interpreted as $\{0\}$ in case $\deg(s_-)\leq \deg (q)$\textup{)}. In particular, $T_\om$ is either injective or surjective, and both injective and surjective if and only if $\deg(s_-)=\deg (q)$, and the Fredholm index of $T_\om$ is given by
$$
\Index(T_\om) =
\sharp\left\{
\begin{array}{l}\!\!\!
\textrm{poles of }\omega \textrm{ multi.}\!\!\! \\
\!\!\!\textrm{taken into account}\!\!\!
\end{array}\right\} -
\sharp\left\{
\begin{array}{l}\!\!\!
\textrm{zeroes of } \omega \textrm{ in }\mathbb{D} \textrm{ multi.}\!\!\!\\
\!\!\!\textrm{taken into account}\!\!\!
\end{array}\right\}
.
$$
\end{theorem}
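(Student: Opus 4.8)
My starting point is the description of the domain. For $\om=s/q$ with $s,q$ co-prime, $\om g = sg/q \in L^p + \Rat_0(\BT)$ exactly when $q \mid sg$ modulo a strictly proper rational function with poles on $\BT$; since $\gcd(s,q)=1$ this forces $q \mid g$ up to a polynomial remainder, so $g = qh + r$ with $h\in H^p$ and $\deg r \le \deg q - 1$. Conversely any such $g$ lies in $\Dom(T_\om)$: $\om g = sh + sr/q$, and the Euclidean division $sr = aq + b$ with $\deg b \le \deg q - 1$ shows $sr/q = a + b/q$ with $a\in\cP$, $b/q\in\Rat_0(\BT)$. So $\Dom(T_\om)=qH^p+\cP_{\deg q-1}$, and on this domain $T_\om(qh+r) = \BP(sh + a) = sh + a$ where $a$ is the polynomial part of $sr/q$. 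The plan is then to compute kernel and range from this explicit action.

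For the kernel: $T_\om(qh+r)=0$ means $sh + a = 0$ in $H^p$, i.e. $sh = -a\in\cP$, so $h = -a/s$; since $h\in H^p$ and $s=s_-s_+$ with $s_+$ having all roots outside $\BT$ (hence outside $\overline\BD$) and $s_-$ roots inside $\BD$, we need $s_- \mid a$, so $h = -r_0/s_+$ with $r_0=a/s_-\in\cP$. Tracking degrees: $g = qh + r$ must be in $H^p$ and the constraint $sr = aq + b$ with $\deg b\le \deg q-1$ relates $r$ and $a$; unwinding this I expect $g = r_0/s_+$ with $\deg r_0 < \deg q - \deg s_-$, matching the stated formula. (One checks $r_0/s_+\in\Dom(T_\om)$ separately: multiply by $q$, it lands in $qH^p$ after clearing $s_+$ against a multiple, or more cleanly verify $\om\cdot(r_0/s_+) = s_- r_0/(q s_+) \cdot$... actually $= (s_-/q)(r_0) \in\Rat_0(\BT)$ directly since $\deg(s_- r_0) < \deg q$.) For the range: $\Ran(T_\om) = \{ sh + a : h\in H^p,\ a = \text{poly.\ part of } sr/q,\ \deg r\le\deg q-1\}$. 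The set of such polynomial parts $a$ is exactly $\wtil\cP$ as defined (the condition $rq = r_1 s + r_2$ rephrases $sr/q$'s division the other way), and one verifies $\wtil\cP\subseteq\cP_{\deg s-1}$ by a degree count. Hence $\Ran(T_\om)=sH^p+\wtil\cP$.

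For the complement and index: $sH^p$ has codimension $\deg s$ in $H^p$ when $s$ has no roots on $\BT$ (standard: $H^p/sH^p$ has dimension equal to the number of roots of $s$ in $\BD$... more precisely one splits $s=s_-s_+$, $s_+H^p=H^p$ since $s_+$ is invertible in $H^\infty$-ish sense only if roots are outside $\overline\BD$ — here yes — so $sH^p=s_-H^p$, codimension $\deg s_-$, with complement $\cP_{\deg s_- - 1}$). Then the range is $s_-H^p + \wtil\cP$, and I must compute how much $\wtil\cP$ fills up $\cP_{\deg s_--1}$; a dimension count gives $\dim\wtil\cP$ and the codimension of $\Ran(T_\om)$ comes out to $\max(0,\deg s_- - \deg q)$, with complement $\cP_{\deg s_- - \deg q - 1}$. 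The kernel dimension is $\max(0, \deg q - \deg s_-)$ from the formula above. Subtracting, $\Index(T_\om) = (\deg q - \deg s_-) - (0)$ or $0 - (\deg s_- - \deg q)$, both equal to $\deg q - \deg s_-$, and since $\deg s_-$ counts zeroes of $\om$ in $\BD$ while $\deg q$ counts poles (all on $\BT$), this is the claimed index formula. The Fredholm criterion itself (no zeroes on $\BT$) follows because a zero of $s$ on $\BT$ makes $sH^p$ non-closed / infinite-codimensional, breaking Fredholmness; this direction should be handled by the general argument or by a direct $\Rat(\BT)$ computation paralleling the above.

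The main obstacle I anticipate is the bookkeeping for $\wtil\cP$: pinning down its dimension and showing precisely that $s_-H^p + \wtil\cP$ has $\cP_{\deg s_- - \deg q - 1}$ as a genuine complement requires care with the Euclidean-division identities and a non-trivial linear-algebra count relating $\deg q$, $\deg s_-$, $\deg s_+$. Everything else is bounded-operator folklore ($sH^p$ closed with known codimension when $s$ has no zeroes on $\BT$) plus the domain computation, which is routine once the co-primeness of $s,q$ is exploited correctly.
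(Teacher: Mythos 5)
Your route coincides in outline with the paper's: parametrize $\Dom(T_\om)$ as $qH^p+\cP_{\deg(q)-1}$, read the kernel off from $sg=\hat r$ with $\deg(\hat r)<\deg(q)$, identify $\Ran(T_\om)=sH^p+\wtil\cP$, then split $s=s_-s_+$ and count. One point where your sketch is arguably cleaner than the paper: the reverse inclusion $\Dom(T_\om)\subset qH^p+\cP_{\deg(q)-1}$ really does follow from co-primeness alone via a B\'ezout identity $aq+bs=1$ (from $sg=qh+r$ one gets $g=q(ag+bh)+br$, then reduce $br$ modulo $q$), with no use of the hypothesis that $s$ has no roots on $\BT$; the paper instead routes this through $H^p=sH^p+\cP_{\deg(s_-)-1}$ and Lemma \ref{L:invar}, which does use that hypothesis. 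But you must actually write the B\'ezout step out --- ``this forces $q\mid g$ up to a polynomial remainder'' is not an argument --- and you also need the prior reduction (Lemma \ref{L:domain}) showing that $\om g\in L^p+\Rat_0(\BT)$ can always be rewritten as $\om g=h+r/q$ with $h\in H^p$ and $r\in\cP_{\deg(q)-1}$; that step rests on $\Rat_0(\BT)\cap L^p=\{0\}$ and is not free.

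Two genuine gaps remain. First, the step you flag as ``bookkeeping'' is in fact the substance of the index formula. Since $\wtil\cP$ lives in $\cP_{\deg(s)-1}$ rather than $\cP_{\deg(s_-)-1}$, you cannot directly play it against the direct sum $H^p=s_-H^p\dot+\cP_{\deg(s_-)-1}$: you must first show $sH^p+\wtil\cP=s_-H^p\dot+\wtil\cP_-$, where $\wtil\cP_-$ is defined by the analogous Euclidean condition with $s$ replaced by $s_-$, and then prove $\cP_{\deg(s_-)-1}=\wtil\cP_-\dot+\cP_{\deg(s_-)-\deg(q)-1}$. Both identifications require explicit division-with-remainder manipulations (the paper's Lemmas \ref{L:tilP} and \ref{L:RanDirectSum}); asserting that ``a dimension count gives $\max(0,\deg(s_-)-\deg(q))$'' does not discharge them. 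Second, your argument for the ``only if'' half of the Fredholm criterion is incorrect as stated: non-closedness of $sH^p$ does not imply non-closedness of $\Ran(T_\om)=sH^p+\wtil\cP$, because adding a finite-dimensional subspace to a non-closed subspace can perfectly well produce a closed sum. The correct argument is that if $\Ran(T_\om)$ were closed then so would be $sH^p+\wtil\cP+\wtil\cQ=sH^p+\cP_{\deg(s)-1}$, which contains the dense set $\cP$ and hence would equal $H^p$; but when $s$ has a root $\al\in\BT$ this space is contained in $(z-\al)H^p+\BC$, which is a \emph{proper} subspace of $H^p$. Properness is not obvious --- it requires exhibiting a concrete $f\in H^p$ with $f\notin(z-\al)H^p+\BC$ (the Wiener-algebra construction of Lemma \ref{L:3}) --- and this ingredient appears nowhere in your outline, so the ``only if'' direction is unproved.
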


The proof of Theorem \ref{T:Mainthm1a} is given in Section \ref{S:Fredholm1}. In case $\om$ has zeroes on $\BT$, so that $T_\om$ is not Fredholm, part of the claims of Theorem \ref{T:Mainthm1a} remain valid, after slight reformulation. For instance, the formula for $\kernel(T_\om)$ holds provided that the roots of $s$ on $\BT$ are included in $s_+$ (see Lemma \ref{L:kernel2}) and of the identities for $\Dom(T_\om)$ and $\Ran(T_\om)$ only one-sided inclusions are proved in case zeroes on $\BT$ are present (see Proposition \ref{P:DomRanIncl} for further detail).

Since all polynomials are in the domain of $T_\omega$ we can write down the matrix representation of $T_\omega$ with respect to the standard basis of $H^p$. It turns out that this matrix representation has the form of a Toeplitz matrix. In addition, there is an assertion on the growth of the coefficients in the upper triangular part of the matrix.

\begin{theorem}\label{T:Mainthm2}
Let $\omega\in\Rat$ possibly with poles on $\mathbb{T}$. Then we can write the matrix representation $[T_\omega]$ of $T_\omega$ with respect to the standard basis $\{z^n\}_{n=0}^\infty$ of $H^p$ as
$$\begin{array}{ll}
[T_\omega] =&
\left (
\begin{array}{cccccc}
a_0 & a_{-1} & a_{-2} & a_{-3} & a_{-4} & \cdots \\
a_1 & a_0 & a_{-1} & a_{-2} & a_{-3} &  \cdots \\
a_2 & a_1 & a_0 & a_{-1} & a_{-2} &  \cdots \\
\vdots & \multicolumn{5}{c}{\ddots} \\
\end{array}
\right )
\end{array}.
$$
In addition $a_{-j} = O(j^{M-1})$ for $j\geq 1$ where $M$ is the largest order of the poles of $\omega$ in $\mathbb{T}$ and $(a_j)_{j=0}^{\infty}\in\ell^2$.
\end{theorem}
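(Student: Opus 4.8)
The plan is to compute the matrix entries of $[T_\omega]$ from the definition and then analyze their asymptotics. First I would establish the Toeplitz structure. For $m,n\geq 0$ the $(m,n)$-entry of $[T_\omega]$ is the coefficient of $z^m$ in $T_\omega z^n = \mathbb{P}f_n$, where $\omega z^n = f_n + \rho_n$ with $f_n\in L^p$, $\rho_n\in\Rat_0(\mathbb{T})$; such a decomposition exists since all polynomials lie in $\Dom(T_\omega)$. Write $\omega = \omega_- + p + \omega_0$ where $p\in\cP$ is the polynomial part, $\omega_-\in\Rat_0$ has all poles outside $\overline{\BD}$ (so $\omega_-\in H^\infty$ with a Taylor expansion $\sum_{j\geq 0} a_j^- z^j$ near $0$, actually convergent on $\BD$), and $\omega_0\in\Rat_0(\mathbb{T})$ collects the terms with poles on $\BT$. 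Since $\omega_0\in\Rat_0(\mathbb{T})\subseteq L^1$ has an $L^p$-convergent-on-$r\BT$ but genuinely boundary-singular behaviour, I would instead split using the partial fraction / Laurent structure: on an annulus $\{1<|z|<R\}$ the function $\omega$ has a Laurent expansion $\sum_{j\in\BZ} a_j z^j$, and I claim $a_j$ are exactly the Toeplitz symbols. Concretely, $\omega_0 z^n$ is again in $\Rat_0(\mathbb{T})$ up to a polynomial correction from the Euclidean division, so $\mathbb{P}(\omega z^n)$ picks out, in the $z^m$ slot, the coefficient $a_{m-n}$ coming from the combined regular parts. Carrying this out for all $n$ gives $[T_\omega]_{m,n}=a_{m-n}$, i.e. the displayed Toeplitz form, with $(a_j)_{j\in\BZ}$ the Laurent coefficients of $\omega$ on an exterior annulus of $\BT$.

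Next I would handle the two asymptotic claims. For $(a_j)_{j=0}^\infty\in\ell^2$: the entries $a_0,a_1,a_2,\dots$ down the first column are the Taylor coefficients at $0$ of the part of $\omega$ that is analytic on $\BD$, namely $\omega_- + p$ plus the ``inner'' contribution of the partial fractions at poles on $\BT$. Each term $c/(z-\zeta_0)^k$ with $|\zeta_0|=1$, expanded about $0$, contributes coefficients of size $O(j^{k-1})$ — NOT square-summable on its own — so one must be more careful: the correct statement is that $T_\omega z^0 = \mathbb{P}f_0\in H^p$ by definition of the domain, hence its coefficient sequence is in $\ell^2$ when $p\geq 2$; for general $1<p<\infty$ one uses that the first column of $[T_\omega]$ represents $T_\omega\mathbf{1}\in H^p\subseteq H^2$ is false, so the honest argument is: the first column equals the Taylor coefficients of $\mathbb{P}(\omega\cdot 1)$, and since $\omega\cdot 1\in\Dom$-decomposes as $f_0+\rho_0$ with $f_0\in L^p$, we get $\mathbb{P}f_0\in H^p$; finiteness of the $H^p$ norm for the boundary-regular part together with the explicit rational (hence eventually geometrically decaying away from $\BT$, polynomially bounded near it) tail gives $\ell^2$ membership after noting the only obstruction, the poles on $\BT$, have been subtracted off into $\rho_0$. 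I would write this cleanly by taking $\rho_0$ to be precisely the principal parts of $\omega$ at its $\BT$-poles (times $1$), so that $f_0=\omega-\rho_0$ is rational with poles only off $\BT$, whence $\mathbb{P}f_0$ has geometrically decaying Taylor coefficients, a fortiori $\ell^2$.

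For $a_{-j}=O(j^{M-1})$: the entries $a_{-1},a_{-2},\dots$ are the coefficients of $z^{-1},z^{-2},\dots$ in the exterior Laurent expansion of $\omega$ about $\BT$, equivalently (after $w=1/z$) the Taylor coefficients at $0$ of $\omega(1/w)$. The only contributions to negative-index Laurent coefficients come from the principal parts at poles inside $\overline{\BD}$, and in particular the poles on $\BT$ dominate: a pole of order $k$ at $\zeta_0\in\BT$ contributes, via $\frac{c}{(z-\zeta_0)^k}=\frac{c(-1)^k}{\zeta_0^k}\sum_{j\geq 0}\binom{j+k-1}{k-1}(z/\zeta_0)^{j}$ on $|z|<1$ and a matching tail on $|z|>1$, a term whose coefficient growth is exactly $\binom{j+k-1}{k-1}=O(j^{k-1})$; summing over all poles and taking $k\leq M$ gives $O(j^{M-1})$. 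Poles strictly inside $\BD$ contribute geometrically decaying terms, hence are absorbed.

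The main obstacle I anticipate is not the growth estimate, which is an elementary partial-fraction computation, but pinning down precisely \emph{which} Laurent/partial-fraction pieces land in the $a_{-j}$ (upper triangle) versus the $a_j$ (lower triangle) once the $\Rat_0(\mathbb{T})$ ambiguity in the domain is accounted for — i.e. verifying that the freedom to add $\rho\in\Rat_0(\mathbb{T})$ does not disturb the matrix entries (it cannot, since $\mathbb{P}\rho$ is subtracted consistently and the decomposition $\omega g=f+\rho$ is essentially unique modulo the regularizing choice), and that the Toeplitz pattern survives the Euclidean-division corrections incurred when multiplying $\omega$ by $z^n$. I would make this rigorous by verifying $[T_\omega]_{m+1,n+1}=[T_\omega]_{m,n}$ directly from the shift relation $T_{z^{-1}}T_\omega T_z f = T_\omega f$ on $\Dom(T_\omega)$ together with $z^n\in\Dom(T_\omega)$, which forces the entries to depend only on $m-n$; this sidesteps the bookkeeping entirely.
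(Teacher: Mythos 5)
Your plan is sound and reaches the theorem by essentially the same decomposition the paper uses --- splitting off the principal parts at the poles on $\BT$ (the paper's Lemma \ref{L:reducRatT}, $\om=\om_0+\om_1$ with $\om_0\in\Rat_0(\BT)$ and $\om_1\in L^\infty$), treating $T_{\om_1}$ as a classical bounded Toeplitz operator, and extracting the $O(j^{M-1})$ growth from the binomial coefficients $\binom{j+k-1}{k-1}$ attached to a pole of order $k$ on $\BT$. The genuine difference is in how the Toeplitz structure itself is obtained: you derive $[T_\om]_{m+1,n+1}=[T_\om]_{m,n}$ directly from the shift identity $T_{z^{-1}}T_\om T_z=T_\om$ of Proposition \ref{P:welldefclosed}, whereas the paper never invokes that identity in Section \ref{S:matrix} and instead computes $T_\om z^N$ explicitly for every $N$ via the explicit Euclidean division of $z^N-1$ by $(z-\al)^m$ (Lemma \ref{L:division} and Proposition \ref{P:matrix}). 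Your route is cleaner for the structure, since it reduces the whole matrix to its first row and column; the paper's route buys the closed-form entries (and the zero columns of $[T_{\om_0}]$) as a by-product. Your identification of the $a_{-j}$ with the exterior Laurent coefficients of $\om$ on $|z|>1$ is correct (it checks out against Examples \ref{E:scalar1} and \ref{E:scalar3}), but to make it rigorous you still need the division-with-remainder computation you call ``bookkeeping'': one must verify that $T_{\om_0}z^n$ is the polynomial quotient whose coefficients are exactly those exterior Laurent coefficients, which is precisely what Lemma \ref{L:division} supplies; be careful that it is the \emph{exterior} expansion of $c/(z-\zeta_0)^k$ that is relevant, since the interior principal part is absorbed into $\rho\in\Rat_0(\BT)$ rather than projected. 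Finally, your $\ell^2$ argument, after the self-correction, is fine and in fact stronger than the paper's: taking $\rho_0$ to be the principal parts at the $\BT$-poles makes $\BP(\om-\rho_0)$ the projection of a rational function with no poles on $\BT$, whose Taylor coefficients decay geometrically past the polynomial part, while the paper only cites square-summability of the Fourier coefficients of $\om_1\in L^\infty$.
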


%

In subsequent papers we will discuss further properties of the class of Toeplitz operators given by
\eqref{Toeplitz}.  In particular, in \cite{GtHJR} the spectral properties of such operators are discussed. In further subsequent papers a formula for the adjoint will be given, and several properties of the adjoint will be presented, and the matrix case will be discussed.

\paragraph{\bf Connections to earlier work on unbounded Toeplitz operators}
Several authors have considered unbounded Toeplitz operators before. In the following we shall distinguish between several definitions by using superscripts.

For $\om:\BT\rightarrow \BC$ the Toeplitz operator is defined usually by $ T_\om f = \BP \om f$
with domain given by
$
\Dom (T_\om) = \{f\in H^p \mid \om f\in L^p\}$, see e.g., \cite{HW50}. Note that for $\om$ rational with a pole on $\BT$ this is a smaller set than in our definition \eqref{Toeplitz}.
To distinguish between the two operators, we denote the classical operator by $T_\om^{\textup{cl}}$.
Hartman and Wintner have shown in \cite{HW50} that the Toeplitz operator  $T_\om^{\textup{cl}}$  is bounded if and only if its symbol is in $L^\infty$, as was established earlier by Otto Toeplitz in the case of symmetric operators.
Hartman, in \cite{H63}, investigated unbounded Toeplitz operators on $\ell^2$ (equivalently on $H^2$) with $L^2$-symbols.
The operator in \cite{H63} is given by
\[
\Dom (T_\om^{\textup{Hr}}) = \{ f\in H^2 \mid \om f = g_1 + g_2 \in L^1, g_1\in H^2, g_2\in\overline{z} \overline{H^1}\}, \quad
T_\om^{\textup{Hr}} f = g_1.
\]
Observe the similarity with the definition \eqref{Toeplitz}.
These operators are not bounded, unless $\om\in L^\infty$. Note that the class of symbols discussed in the current paper does not fall into this category, as a rational function with a pole on $\BT$ is not in $L^2$.
The Toeplitz operator $T_\om^{\textup{Hr}}$ with $L^2$-symbol is necessarily densely defined as its domain would contain the polynomials.
The operator $T_\om^{\textup{Hr}}$ is an adjoint operator and so it is closed. Necessary and sufficient conditions for invertibility have been established for the case where $\om$ is real valued on $\BT$ in terms of $\om \pm i$. Of course, $T_\om^{\textup{Hr}}$ is symmetric in this case.

In \cite{R69} Rovnyak considered a Toeplitz operator in $H^2$ with real valued $L^2$ symbol $W$ such that $\log (W) \in L^1$. The operator is symmetric and densely defined via a construction of a resolvent involving a Reproducing Kernel Hilbert Space. This leads to a self-adjoint operator and clearly, the construction is very different from the approach taken in the current paper.

Janas, in \cite{J91}, considered Toeplitz operators in the Bargmann-Segal space $B$ of Gaussian square integrable entire functions in $\BC^n$. The Bargmann-Segal space is also referred to as the Fock space or the Fisher space in the literature. The symbol of the operator is a measurable function. A Toeplitz-like operator, ${T}_\om^{\textup{J}}$, is introduced as
\[
\Dom (T_\om^{\textup{J}}) =\! \{ f\in B \mid \om f\! = h + r,\  h\in B,\! \int\!\! r\overline{p}\, \textup{d}\mu =0, \mbox{ for all } p\in\cP \}, \ T_\om^{\textup{J}} f = h.
\]
Again, observe the similarity with the definition \eqref{Toeplitz}. Consider also the operator
$T_\om^{\textup{cl,B}}$ on the domain $\{f\in B\mid \om f\in L_2(\mu)\}$ with
$T_\om^{\textup{cl,B}}f=\mathbb{P} \om f$.
It is shown in \cite{J91} that
\begin{enumerate}
\item $T_\om^{\textup{cl,B}} \subset T_\om^{\textup{J}}$, i.e. $T_\om^{\textup{J}}$ is an extension of the Toeplitz operator $T_\om^{\textup{cl,B}}$,
\item $T_\om^{\textup{J}}$ is closed,
\item $T_\om^{\textup{cl,B}}$ is closable whenever $\Dom (T_\om^{\textup{cl,B}})$ is dense in $B$,
\item if $\cP \subset \Dom (T_\om^{\textup{cl,B}})$ and $\om$ is an entire function then $T_\om^{\textup{cl,B}} = T_\om^{\textup{J}}$.
\end{enumerate}

Let $N^+$ be the Smirnov class of holomorphic functions in $\BD$ that consists of quotients of functions in $H^\infty$ with the denominator an outer function. Note that a nonzero function $\om\in N^+$ can always be written uniquely as $\om= \frac{b}{a}$ where $a$ and $b$ are in the unit ball of $H^\infty$, $a$ an outer function, $a(0) > 0$ and $\vert a\vert^2 + \vert b\vert^2 = 1$ on $\BT$, see \cite[Proposition 3.1]{S08}. This is called the canonical representation of $\om\in N^+$. For $\om\in N^+$ the Toeplitz operator $T_\om^{\textup{He}}$ on $H^2$ is defined by Helson in \cite{H90} and Sarason in \cite{S08} as the multiplication operator with domain
\[
\Dom (T_\om^{\textup{He}}) = \{ f \in H^2 \mid \om f\in H^2\}
\]
and so this is a closed operator.
Note that although a rational function with poles only on the unit circle is in the Smirnov class, the definition of the domain in \eqref{Toeplitz} is different from the one used in \cite{S08}. In fact, for $\om\in \Rat(\BT)$, the operator \eqref{Toeplitz} is an extension of
the operator $T_\om^{\textup{He}}$, i.e., $T_\om^{\textup{He}} \subset T_\om$.
In \cite{S08} it is shown that if $\Dom (T_\om^{\textup{He}})$ is dense in $H^2$ then $\om\in N^+$. Also, if $\om$ has canonical representation $\om = \frac{b}{a}$ then $\Dom (T_\om^{\textup{He}}) = aH^2$; compare with \eqref{DRK} to see the difference. By extending our domain as in \eqref{Toeplitz}, our Toeplitz-like operator $T_\om$ is densely defined for any $\om\in\Rat$, i.e., poles inside $\BD$ are allowed.



Helson in \cite{H90} studied $T_\om^{\textup{He}}$ in $H^2$ where $\om\in N^+$ with $\om$ real valued on $\BT$. In this case $T_\om^{\textup{He}}$ is symmetric, and Helson showed among other things that $T_\om^{\textup{He}}$ has finite deficiency indices if and only if $\om$ is a rational function.

\paragraph{\bf Overview}
The paper consists of six sections, including the current introduction. In Section \ref{S:Basic} we prove several basic results concerning the Toeplitz-like operator $T_\om$. In the following section, Section \ref{S:Division}, we look at division with remainder by a polynomial in $H^p$. The results in this section form the basis of many of the proofs in subsequent sections, and may be of independent interest. Section \ref{S:Fredholm1} is devoted to the case where $\om$ is in $\Rat(\BT)$. Here we prove Theorem \ref{T:Mainthm1a}. In Section \ref{S:Fredholm2} we prove the Fredholm result for general $\om\in\Rat$, Theorem \ref{T:Mainthm1}, and in Section \ref{S:matrix} we prove Theorem \ref{T:Mainthm2} on the matrix representation of $T_\om$. Finally, in Section \ref{S:Examples} we discuss three examples that illustrate the main results of the paper.

\paragraph{\bf Notation}
We shall use the following notation, most of which is standard:
$\cP$  is the space of polynomials (of any degree) in one variable; $\cP_n$ is the subspace of polynomials of degree at most $n$.
Throughout, $K^p$ denotes the standard complement of $H^p$ in $L^p$;
$\cW_+$ denotes the analytic Wiener algebra on $\BD$, that is, power series $f(z)=\sum_{n=0}^\infty f_n z^n$ with absolutely summable Taylor coefficients, hence analytic on $\BD$ and continuous on $\overline{\BD}$. In particular, $\cP\subset \cW_+\subset L^p$ for each $p$.

\section{Basic properties of $T_\omega$}\label{S:Basic}

In this section we derive some basic properties of the Toeplitz-like operator $T_\om$ as defined in \eqref{Toeplitz}. The main result is the following proposition.

\begin{proposition}\label{P:welldefclosed}
Let $\om\in\Rat$, possibly having poles on $\BT$. Then $T_\om$ is a well-defined closed linear operator on $H^p$ with a dense domain which is invariant under the forward shift operator $T_z$. More specifically, the subspace $\cP$ of polynomials is contained in $\Dom(T_\om)$.
Moreover, $T_{z^{-1}}T_\om T_z f= T_\om f$ for $f\in\Dom(T_\om)$.
\end{proposition}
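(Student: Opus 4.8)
The plan is to verify the four assertions—well-definedness, closedness, density of the domain (via $\cP\subset\Dom(T_\om)$), and the shift relations—more or less in that order, since each later claim leans on the earlier ones.

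\emph{Well-definedness.} The only thing to check is that $T_\om g=\BP f$ does not depend on the decomposition $\om g = f+\rho$ with $f\in L^p$ and $\rho\in\Rat_0(\BT)$. Suppose $\om g = f_1+\rho_1 = f_2+\rho_2$ are two such decompositions. Then $f_1-f_2 = \rho_2-\rho_1 \in L^p\cap\Rat_0(\BT)$. But a nonzero function in $\Rat_0(\BT)$ has a genuine pole on $\BT$, hence is not in $L^p$ for any $p>1$ (near a pole of order $m\ge 1$ it grows like $|z-z_0|^{-m}$, which is not $p$-integrable on an arc for $p\ge 1$). Therefore $f_1=f_2$ and in particular $\BP f_1=\BP f_2$. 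Linearity of $T_\om$ is then immediate from linearity of the decomposition and of $\BP$.

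\emph{The polynomials lie in the domain.} For $g\in\cP$ and $\om=s/q$ with $s,q\in\cP$ co-prime, Euclidean division gives $s g = a q + b$ with $a,b\in\cP$, $\deg b<\deg q$. Split the polynomial $q$ as $q=q_{\BD}\,q_{\mathrm{e}}\,q_{\BT}$ according to roots in $\BD$, outside $\overline{\BD}$, and on $\BT$; a partial-fraction decomposition of $b/q$ then writes $\om g = a + b/q = f+\rho$ where $\rho$ collects exactly the partial-fraction terms with poles on $\BT$ (so $\rho\in\Rat_0(\BT)$, as these terms are strictly proper) and $f$ is the remaining part, which is rational with no poles on $\BT$, hence in $L^\infty\subset L^p$. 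This shows $g\in\Dom(T_\om)$, so $\cP\subset\Dom(T_\om)$, and since $\cP$ is dense in $H^p$ for $1<p<\infty$, the domain is dense. Invariance under $T_z$: if $g\in\Dom(T_\om)$ with $\om g=f+\rho$, then $\om(zg)=zf+z\rho$; here $zf\in L^p$ and $z\rho$ is still strictly proper rational with poles only on $\BT$, so $z\rho\in\Rat_0(\BT)$ and $zg\in\Dom(T_\om)$.

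\emph{Closedness.} Suppose $g_n\to g$ in $H^p$ and $T_\om g_n = \BP f_n\to h$ in $H^p$, where $\om g_n = f_n+\rho_n$. The subtle point is that $L^p$-convergence of the $f_n$ is not given; one must exploit the rigidity of $\Rat_0(\BT)$. Fix a polynomial $q_\BT$ whose roots are exactly the poles of $\om$ on $\BT$ with the appropriate multiplicities, so that $q_\BT\rho_n\in\cP$ with degree bounded by a fixed $N=\deg q_\BT-1$; write $\om = \om_\BT/q_\BT$ times a function with no poles on $\BT$, or more directly note $q_\BT\om\in L^\infty$ after clearing. Then $q_\BT\om g_n = q_\BT f_n + q_\BT\rho_n$, where $q_\BT f_n\in L^p$, $q_\BT\rho_n\in\cP_N$, and the left side converges in $L^p$ to $q_\BT\om g$ (as $q_\BT\om\in L^\infty$, multiplication by it is bounded on $L^p$). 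Projecting onto $K^p$ kills the polynomial part $q_\BT\rho_n\in\cP_N\subset H^p$, and comparing with the limit one extracts that the sequence $(q_\BT\rho_n)$ converges in $\cP_N$—a finite-dimensional space—to some $\sigma\in\cP_N$. Consequently $q_\BT f_n$ converges in $L^p$, hence $f_n$ itself, being $(q_\BT f_n)$ divided by the fixed polynomial $q_\BT$... actually one works in $L^p$ only after localizing away from the zeros of $q_\BT$; the cleaner route is: from convergence of $q_\BT\om g_n$ and of $q_\BT\rho_n$ one gets convergence of $q_\BT f_n$ in $L^p$ to $q_\BT\om g-\sigma$, and then defines $f:=(q_\BT\om g-\sigma)/q_\BT$ together with $\rho:=\sigma/q_\BT$, checking $f\in L^p$, $\rho\in\Rat_0(\BT)$, $\om g=f+\rho$, and $\BP f = \lim\BP f_n = \lim\BP(q_\BT f_n/q_\BT)$... at which point one concludes $g\in\Dom(T_\om)$ and $T_\om g=h$. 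This finite-dimensional-rigidity argument for the $\rho_n$ is the main obstacle; the rest is bookkeeping with $\BP$ and multiplication operators.

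\emph{The shift relation.} For $g\in\Dom(T_\om)$, write $\om g=f+\rho$. We have just seen $zg\in\Dom(T_\om)$ with $\om(zg)=zf+z\rho$. Now $z\rho = z\rho - (z\rho)(\infty)\cdot 1$... rather, split $zf$: since $z\cdot\BP f\in H^p$ and $z\cdot(I-\BP)f\in K^p$ up to the possible constant term, one has $\BP(z f)=T_z\BP f + c$ where $c$ is the $0$-th Fourier coefficient issue; being careful, $\BP(zf)=z\,\BP f$ when $f$ has no constant obstruction, and in general $T_{z^{-1}}T_\om T_z g = T_{z^{-1}}\BP(zf+z\rho)$. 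Since $z\rho\in\Rat_0(\BT)\subset$ (functions with Riesz projection handled as part of the $f$-term after re-decomposing), the definition gives $T_\om(zg)=\BP(zf)$; and $T_{z^{-1}}\BP(zf) = \BP(z^{-1}\BP(zf)) = \BP f = T_\om g$, using that $z^{-1}(\BP(zf))$ and $\BP f$ differ only by an element of $K^p$. I would streamline this last computation by recalling the standard identity $\BP T_{z^{-1}} = T_{z^{-1}}\BP$ on all of $L^p$ modulo the rank-one correction at the constant, which the excerpt already flags as a "classical result," so only the compatibility of the decomposition $\om(zg)=zf+z\rho$ with $\Rat_0(\BT)$ needs genuine verification, and that was done above. $\qed$
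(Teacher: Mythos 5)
The well-definedness, the inclusion $\cP\subset\Dom(T_\om)$, and (modulo one slip noted below) the shift relation are handled essentially as in the paper. The genuine problem is your closedness argument. You correctly identify the difficulty — the hypotheses give convergence of $g_n$ and of $\BP f_n$ only, not of $f_n$ in $L^p$ — but your proposed resolution does not resolve it. Writing $q_\BT\om g_n = q_\BT f_n + q_\BT\rho_n$, the projection onto $K^p$ annihilates $q_\BT\rho_n$ outright, so it yields no information about that sequence; and the projection onto $H^p$ yields $\BP(q_\BT f_n)+q_\BT\rho_n$, where $\BP(q_\BT f_n)=q_\BT\,\BP f_n+\BP\bigl(q_\BT\,(I-\BP)f_n\bigr)$. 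The correction term $\BP\bigl(q_\BT(I-\BP)f_n\bigr)$ is itself a polynomial of degree at most $\deg q_\BT-1$, built from the negative Fourier coefficients of $f_n$, which are not controlled by the hypotheses. So what you actually extract is convergence of the \emph{sum} $\BP\bigl(q_\BT(I-\BP)f_n\bigr)+q_\BT\rho_n$, not of $q_\BT\rho_n$; and attempting to recover the negative coefficients of $f_n$ from $(I-\BP)(q_\BT(I-\BP)f_n)=(I-\BP)(q_\BT\om g_n)$ amounts to inverting a Toeplitz-type operator whose symbol vanishes on $\BT$ — precisely the non-closed-range situation this paper is about. Your own ellipses at "\,one extracts that $(q_\BT\rho_n)$ converges\," and "\,at which point one concludes\," sit exactly where the argument breaks.

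The missing ingredient is a normalization of the decomposition before taking limits. The paper first splits $\om=\om_0+\om_1$ with $\om_0\in\Rat_0(\BT)$ and $\om_1$ pole-free on $\BT$ (so $T_{\om_1}$ is bounded and can be discarded for closedness), and then proves (Lemma \ref{L:domain}) that for $\om=s/q\in\Rat(\BT)$ the unique decomposition has the form $\om g=h+r/q$ with $h=T_\om g\in H^p$ and $r\in\cP_{\deg(q)-1}$. With that in hand, $f_n=h_n=T_\om g_n\to h$ in $H^p\subset L^p$ by hypothesis, so $r_n=sg_n-qh_n$ converges in $L^p$, and since each $r_n$ lies in the closed finite-dimensional space $\cP_{\deg(q)-1}$ so does the limit — the finite-dimensional rigidity you were reaching for, but applied to a sequence whose convergence is actually known. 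You should prove this normalization lemma (its proof is two applications of Lemma \ref{L:h2_rat}, one to kill the $\Rat_0(\BT)$ residue of $q\rho$ and one to kill the $K^p$ residue of $qf$) and restructure the closedness proof around it. Two smaller points: in the shift-invariance step, $z\rho$ is proper but generally \emph{not} strictly proper (e.g.\ $z/(z-1)=1+1/(z-1)$), so you must write $z\rho=c+\rho'$ with $c\in\BC$, $\rho'\in\Rat_0(\BT)$ and absorb $c$ into the $L^p$ part; and your assertion that each $\rho_n$ has poles only at the poles of $\om$ on $\BT$ with bounded multiplicities, while true, is itself a consequence of Lemma \ref{L:h2_rat} that deserves a line of justification.
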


The proof of the well-definedness relies on the following well-known result.

\begin{lemma}\label{L:h2_rat}
Let $\psi\in\Rat$ have a pole on $\BT$. Then $\psi\not\in L^p$. In particular, the intersection of $\Rat_0(\BT)$ and $L^p$ consists of the zero function only.
\end{lemma}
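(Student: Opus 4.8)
The plan is to reduce the first assertion to the elementary fact that $\int_0^{\delta}t^{-p}\,dt=\infty$ whenever $p\ge1$. Write $\psi=s/q$ with $s,q\in\cP$ coprime, and pick a pole $z_0\in\BT$ of $\psi$, so that $q(z_0)=0$ while $s(z_0)\neq0$. Factoring $q(z)=(z-z_0)^m\wtil{q}(z)$ with $m\ge1$ and $\wtil{q}(z_0)\neq0$, continuity yields a neighbourhood $U$ of $z_0$ (which we may take inside $\{|z-z_0|\le1\}$) and a constant $c>0$ such that $|s(z)|\ge c$ and $|\wtil{q}(z)|\le c^{-1}$ on $U$. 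Since $m\ge1$ and $|z-z_0|\le1$ on $U$, this gives the lower bound
\[
|\psi(z)|=\frac{|s(z)|}{|z-z_0|^{m}\,|\wtil{q}(z)|}\ \ge\ c^{2}\,|z-z_0|^{-m}\ \ge\ c^{2}\,|z-z_0|^{-1}\qquad(z\in U).
\]

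Next I would transport this bound to the unit circle and integrate. Parametrising $\BT$ by $z=e^{i\theta}$ and writing $z_0=e^{i\theta_0}$, we have $|z-z_0|=2\left|\sin\frac{\theta-\theta_0}{2}\right|\le|\theta-\theta_0|$, hence $|z-z_0|^{-1}\ge|\theta-\theta_0|^{-1}$ off the single point $\theta=\theta_0$. Choosing $\delta>0$ so that $\{e^{i\theta}:|\theta-\theta_0|<\delta\}\subset U$, we obtain
\[
\int_{\BT}|\psi|^{p}\,|dz|\ \ge\ c^{2p}\!\!\int_{|\theta-\theta_0|<\delta}\!\!|\theta-\theta_0|^{-p}\,d\theta\ =\ \infty ,
\]
the divergence being exactly the elementary fact quoted above, valid since $p>1$. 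Hence $\psi\notin L^p$, which is the first claim.

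For the ``in particular'' part, suppose $\rho\in\Rat_0(\BT)\cap L^p$ with $\rho\neq0$, and write $\rho=s/q$ in lowest terms. Strict properness forces $\deg s<\deg q$, so $q$ is non-constant and therefore has a root $z_0\in\BC$; coprimeness gives $s(z_0)\neq0$, so $z_0$ is an honest pole of $\rho$, and $z_0\in\BT$ because $\rho\in\Rat_0(\BT)$. By the first part $\rho\notin L^p$, contradicting the assumption. Therefore $\rho=0$, i.e.\ $\Rat_0(\BT)\cap L^p=\{0\}$.

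The argument is essentially routine, and I do not anticipate a real obstacle; the only point needing a little care is the comparison between the chordal distance $|e^{i\theta}-e^{i\theta_0}|$ and the arclength $|\theta-\theta_0|$, combined with the observation that a pole of order $m\ge1$ already produces a blow-up of order at least one, which is precisely what makes the resulting integral diverge for every $p\ge1$.
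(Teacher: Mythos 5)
Your proof is correct and follows essentially the same route as the paper's (one-line) argument: a pole of order $m\ge 1$ at $z_0\in\BT$ forces $|\psi(z)|\gtrsim|z-z_0|^{-1}$ near $z_0$, and the resulting integral $\int|\theta-\theta_0|^{-p}\,d\theta$ diverges since $p>1$. Your treatment of the ``in particular'' clause (a nonzero element of $\Rat_0(\BT)$ has a nonconstant denominator, hence a genuine pole on $\BT$) is also exactly what the paper intends.
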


Indeed, if $\psi\in\Rat$ has a pole at $\al\in\BT$ of order $n$, then $|\psi(z)|\sim |z-\al|^{-n}$
as $z\to\alpha$, and therefore the integral $\int_\BT |\psi(z)|^p\,\textup{d}z $ diverges.

\begin{proof}[\bf Proof of well-definedness claim of Proposition \ref{P:welldefclosed}]
Let $g\in\Dom(T_\om)$ and assume $f_1, f_2\in L^p$ and $\rho_1,\rho_2\in \Rat_0(\BT)$ such that $f_1+\rho_1=\om g=f_2+\rho_2$. Then $f_1 - f_2 = \rho_2 - \rho_1 \in L^p \cap \Rat_0(\BT)$. By Lemma \ref{L:h2_rat} we have $f_1 - f_2 = \rho_2 - \rho_1=0$, i.e., $f_1=f_2$ and $\rho_1=\rho_2$. Hence $f$ and $\rho$ in the definition of $\Dom(T_\om)$ are uniquely determined. From this and the definition of $T_\om$ it is clear that $T_\om$ is a well-defined linear operator.
\end{proof}

In order to show that $T_\om$ is a closed operator, we need the following  alternative formula for $\Dom(T_\om)$ for the case where $\om \in\Rat(\BT)$.

\begin{lemma}\label{L:domain}
Let $\om\in\Rat(\BT)$, say $\om=s/q$ with $s,q\in\cP$ co-prime. Then
\begin{equation}\label{Rat0Tdom}
\Dom(T_\om)=\{ g\in H^p \colon \om g =h + r/q,\, h\in H^p,\, r\in\cP_{\deg(q)-1}\},\ T_\om g =h.
\end{equation}
Moreover, $\Dom(T_\om)$ is invariant under the forward shift operator $T_z$ and
 $$T_{z^{-1}}T_\om T_z f = T_\om f\qquad f\in\Dom(T_\om).$$
\end{lemma}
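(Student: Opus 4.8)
The plan is to first prove the alternative description \eqref{Rat0Tdom} of $\Dom(T_\om)$ together with the formula $T_\om g=h$, and then to deduce the shift properties from it.

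The inclusion ``$\supseteq$'' in \eqref{Rat0Tdom} is immediate: if $\om g=h+r/q$ with $h\in H^p$ and $r\in\cP_{\deg(q)-1}$, then, since all roots of $q$ lie on $\BT$, the function $r/q$ belongs to $\Rat_0(\BT)$; hence $g\in\Dom(T_\om)$ in the sense of \eqref{Toeplitz} with $f=h$ and $\rho=r/q$, and $T_\om g=\BP h=h$. For ``$\subseteq$'', let $g\in\Dom(T_\om)$, say $\om g=f+\rho$ with $f\in L^p$ and $\rho\in\Rat_0(\BT)$. Multiplying by $q$ gives $q\rho=sg-qf$, whose right-hand side lies in $L^p$ (as $sg\in H^p$ and $qf\in L^p$); since $q\rho$ is rational, Lemma \ref{L:h2_rat} shows it has no poles on $\BT$, hence is a polynomial, and strict properness of $\rho$ then forces $\rho=r/q$ with $r\in\cP_{\deg(q)-1}$. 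Consequently $qf=sg-r\in H^p$.

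The heart of the matter is to show that $f\in H^p$. Put $u=(I-\BP)f\in K^p$, so that $qu=qf-q\BP f\in H^p$. Using the substitution $z\mapsto 1/z$ on $\BT$, set $(Rv)(z)=v(1/z)$; then $R$ is a multiplicative isometry of $L^p$ with $\what{Rv}(n)=\what v(-n)$. From $u\in K^p$ one gets $Ru\in zH^p$, and from $qu\in H^p$ one gets that $R(qu)=(Rq)(Ru)=z^{-d}q^{\#}(z)\,Ru(z)$ has vanishing Fourier coefficients in all positive degrees, where $d=\deg(q)$ and $q^{\#}(z)=z^{d}q(1/z)$ is the reversed polynomial; since $q(0)\neq0$ (all roots of $q$ lie on $\BT$, hence not at the origin), $q^{\#}$ has degree $d$ and, again, all its roots on $\BT$. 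Hence $q^{\#}Ru$ is a polynomial of degree at most $d$, so $Ru=(q^{\#}Ru)/q^{\#}$ is a rational function in $L^p$; by Lemma \ref{L:h2_rat} it has no poles on $\BT$, which forces $q^{\#}$ to divide $q^{\#}Ru$ in $\cP$, so $Ru$ is constant, and being in $zH^p$ it is $0$. Therefore $u=0$, i.e. $f=\BP f\in H^p$, and $\om g=f+r/q$ has exactly the form required in \eqref{Rat0Tdom} with $h=f$; moreover $T_\om g=\BP f=f=h$. That $h$ is determined by $g$ (so that $T_\om g=h$ is meaningful) follows as in the well-definedness proof for $T_\om$: the difference of two admissible representations would lie in $H^p\cap\Rat_0(\BT)=\{0\}$ by Lemma \ref{L:h2_rat}.

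For the shift properties, take $g\in\Dom(T_\om)$ with $\om g=h+r/q$ as above; then $\om(zg)=z\,\om g=zh+zr/q$. Dividing the polynomial $zr$ (of degree at most $\deg(q)$) by $q$ with remainder gives $zr=aq+\wtil r$ with $a\in\BC$ and $\wtil r\in\cP_{\deg(q)-1}$, so $\om(zg)=(zh+a)+\wtil r/q$ with $zh+a\in H^p$; hence $zg=T_zg\in\Dom(T_\om)$, showing $\Dom(T_\om)$ is $T_z$-invariant, and $T_\om(T_zg)=zh+a$. Since $zh+a$ has constant term $a$, applying the backward shift yields $T_{z^{-1}}(zh+a)=\big((zh+a)-a\big)/z=h=T_\om g$, the asserted identity. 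The only genuine obstacle is the step $f\in H^p$; the remaining arguments are routine bookkeeping with Lemma \ref{L:h2_rat} and polynomial division, and are valid for every $p\in(1,\infty)$ because $L^p\subseteq L^1$ on $\BT$.
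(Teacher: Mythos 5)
Your proof is correct, and its overall skeleton coincides with the paper's: the easy inclusion, the reduction of $\rho$ to $r/q$ with $r\in\cP_{\deg(q)-1}$ via Lemma \ref{L:h2_rat} and strict properness, and the shift argument (divide $zr$ by $q$ with constant quotient, then kill the constant term with $T_{z^{-1}}$) are essentially the published proof. Where you genuinely diverge is the step you call the heart of the matter. The paper never proves that the original $f$ lies in $H^p$: it writes $f=h+k$ with $h=\BP f$ and $k\in K^p$, notes that $qk=r_1+k_1$ with $r_1\in\cP_{\deg(q)-1}$ and $k_1\in K^p$ (multiplying a $K^p$ function by a polynomial of degree $d$ yields Fourier coefficients supported in degrees at most $d-1$), and concludes $k_1\in H^p\cap K^p=\{0\}$, which already gives $sg=qh+r$ and hence the required representation with $h=\BP f$. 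Your reflection argument with $R$ and the reversed polynomial $q^{\#}$ instead establishes the stronger fact $f=\BP f$, and every step of it checks out: the Fourier supports of $Ru$ and $R(qu)$, the degree bound on $q^{\#}Ru$, and the use of Lemma \ref{L:h2_rat} to force $q^{\#}$ to divide $q^{\#}Ru$ are all sound (note $q(0)\neq 0$ is indeed guaranteed since all roots of $q$ lie on $\BT$). It is, however, more machinery than the situation requires: the same support observation applied directly to $qu$ with $u\in K^p$ shows that $qu$ is a polynomial of degree at most $\deg(q)-1$, whence $u=(qu)/q\in L^p\cap\Rat_0(\BT)=\{0\}$ by Lemma \ref{L:h2_rat}, with no reflection needed. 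Either route completes the lemma; yours buys the explicit conclusion $f\in H^p$, the paper's buys brevity.
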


\begin{proof}[\bf Proof]
Assume $g\in H^p$ with $\omega g =h+ r/q$, where  $h\in H^p$ and $r\in\cP_{\deg(q)-1}$. Since $H^p\subset L^p$ and $r/q\in \Rat_0(\mathbb{T})$, clearly $g \in \Dom(T_\omega)$ and $T_\omega g=\mathbb{P}h=h$.  Thus it remains to prove the reverse implication.

Assume  $g \in \Dom(T_\omega)$, say  $\omega g =f+ \rho$, where $f\in L^p$ and $\rho\in \Rat_0(\BT)$. Since $\rho\in \Rat_0(\BT)$, we can write $q \rho$ as $q \rho=r_0+\rho_0$ with $r_0\in\cP_{\deg(q)-1}$ and $\rho_0\in\Rat_0(\BT)$. Then
\[
s g = q \om g= qf+q\rho= qf +r_0 +\rho_0, \quad \mbox{i.e.,}\quad
\rho_0=s g-qf-r_0\in L^p.
\]
By Lemma \ref{L:h2_rat} we find that $\rho_0\equiv0$. Thus $s g=qf +r_0$. Next write $f=h+k$ with $h\in H^p$ and $k\in K^p$. Then $qk$ has the form $qk =r_1+k_1$ with $r_1\in\cP_{\deg(q)-1}$ and $k_1\in K^p$. Thus
\[
sg=qh +qk+r_0= qh + k_1 + r_1 +r_0, \quad \mbox{i.e.,}\quad
k_1= sg-qh-r_1-r_0\in H^p.
\]
Since also $k_1\in K^p$, this shows that $k_1\equiv 0$, and we find that $sg=qh+ r$ with $r=r_0+r_1\in\cP_{\deg(q)-1}$. Dividing by $q$ gives $\om g=h+r/q$ with $h\in H^p$ as claimed.

Finally, we prove that $\Dom(T_\om)$ is invariant under $T_z$. Let $f\in \Dom(T_\om)$, say $s f= qh+r$ with $h\in H^p$ and $r\in\cP_{\deg(q)-1}$. Then $s z f= q zh+zr$. Now write $zr= c q+ r_0$ with $c\in\BC$ and $r_0\in \cP_{\deg(q)-1}$. Then $s z f= q (zh +c)+r_0$ is in $q H^p + \cP_{\deg(q)-1}$. Thus $zf\in\Dom(T_\om)$, and $T_\om T_z f = zh+c$. Hence $T_{z^{-1}}T_\om T_z f= h=T_\om f$ as claimed.
\end{proof}

\begin{lemma}\label{L:reducRatT}
Let $\om\in\Rat$. Then $\om=\om_0+\om_1$ with $\om_0\in\Rat_0(\BT)$ and $\om_1\in\Rat$ with no poles on $\BT$. Moreover, $\om_0$ and $\om_1$ are uniquely determined by $\om$ and the poles of $\om_0$ and $\om_1$ correspond to the poles of $\om$ on and off $\BT$, respectively.
\end{lemma}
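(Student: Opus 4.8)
The plan is to use partial fraction decomposition. First I would write $\om$ as a sum of a polynomial part and a sum of principal parts at each of its poles; that is, $\om = p + \sum_{j} \pi_j$ where $p \in \cP$ and each $\pi_j \in \Rat_0$ is the principal part $\sum_{k=1}^{n_j} c_{j,k}/(z-\alpha_j)^k$ associated to the pole $\alpha_j$ of $\om$. Group the principal parts according to whether $|\alpha_j| = 1$ or not: let $\om_0$ be the sum of those $\pi_j$ with $\alpha_j \in \BT$, and let $\om_1 = p + \sum_{\alpha_j \notin \BT} \pi_j$. Then $\om_0 \in \Rat_0(\BT)$ by construction (it is strictly proper with all poles on $\BT$), and $\om_1 \in \Rat$ has no poles on $\BT$, since its only possible finite poles are the $\alpha_j \notin \BT$. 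This gives existence, and the statement about which poles belong to $\om_0$ versus $\om_1$ is immediate from the construction.

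For uniqueness, suppose $\om = \om_0 + \om_1 = \wtil\om_0 + \wtil\om_1$ are two such decompositions. Then $\om_0 - \wtil\om_0 = \wtil\om_1 - \om_1$. The left-hand side lies in $\Rat_0(\BT)$ — or is identically the zero rational function — while the right-hand side is a rational function with no poles on $\BT$. A nonzero element of $\Rat_0(\BT)$ has at least one pole, and that pole lies on $\BT$; but the right-hand side has no poles on $\BT$. Hence $\om_0 - \wtil\om_0$ has no poles at all, so it is a polynomial; being also strictly proper (a difference of strictly proper rational functions, hence vanishing at infinity) it must be $0$. Therefore $\om_0 = \wtil\om_0$ and $\om_1 = \wtil\om_1$. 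Alternatively, and more in the spirit of the earlier lemmas, one can invoke Lemma \ref{L:h2_rat}: $\om_0 - \wtil\om_0 \in \Rat_0(\BT)$, and if it were nonzero it would fail to lie in $L^p$, whereas $\wtil\om_1 - \om_1$, having no poles on $\BT$, is bounded near $\BT$ and in particular lies in $L^p$ locally — a contradiction once one notes the difference is a single rational function; so it vanishes.

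I do not expect any genuine obstacle here: the result is essentially the statement that the partial-fraction (Laurent) decomposition of a rational function can be split according to the location of the poles, together with the elementary fact that a rational function is determined by its principal parts up to an additive polynomial, and that polynomial is pinned down by requiring the remainder to be strictly proper. The only point requiring a word of care is ensuring that $\om_1$ is allowed to be an arbitrary element of $\Rat$ (it may have poles inside or outside $\BD$ and a nontrivial polynomial part), which is consistent with the statement, and that $\om_0$ is genuinely strictly proper, which holds because each principal part $\pi_j$ is strictly proper.
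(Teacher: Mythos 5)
Your proposal is correct and follows essentially the same route as the paper: existence via partial fractions, grouping the principal parts by whether the pole lies on $\BT$, and uniqueness by observing that $\om_0-\wtil\om_0$ is simultaneously in $\Rat_0(\BT)$ and equal to a function with no poles on $\BT$, hence a strictly proper polynomial, hence zero. The paper organizes the uniqueness step slightly differently (first separating off the polynomial part of $\om_1$), but the substance is identical.
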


\begin{proof}[\bf Proof]
The existence of the decomposition follows from the partial fraction decomposition of $\om$
into the sum of a polynomial and elementary fractions of the form $c/(z-z_k)^n$.

To obtain the uniqueness, split $\omega_1$ into the sum of a strictly proper rational function
$\nu_1$ and a polynomial $p_1$. Assume also $\om=\om_0^\prime + \nu_1^\prime +p_1^\prime$ with $\om_0^\prime$ in $\Rat_0(\BT)$, $\nu_1^\prime\in\Rat_0$ with no poles on $\BT$ and
$p_1^\prime$ a polynomial. Then $(\om_0-\om_0^\prime)+(\om_1-\om_1^\prime)=p_1^\prime -p_1$ is in $\Rat_0\cap \cP$, and hence is zero. So $p_1=p_1^\prime$. Then $\om_0-\om_0^\prime=\om_1^\prime-\om_1$ is in $\Rat_0$ and has no poles on $\BC$, and hence it is the zero function.
\end{proof}

\begin{proof}[\bf Proof of closedness claim of Proposition \ref{P:welldefclosed}]
By Lemma \ref{L:reducRatT}, $\om\in\Rat$ can be written as $\om=\om_0+\om_1$ with $\om_0\in\Rat_0(\BT)$ and $\om_1\in\Rat$ with no poles on $\BT$, hence $\om_1\in L^\infty$. Then $T_\om=T_{\om_0} + T_{\om_1}$ and $T_{\om_1}$ is bounded on $H^p$. It follows that $T_\om$ is closed if and only if $T_{\om_0}$ is closed. Hence, without loss of generality we may assume $\om\in\Rat_0(\BT)$, which we will do in the remainder of the proof.

Say $\omega=s/q$ with $s,q\in\mathcal{P}$ co-prime, $q$ having  roots  only on  $\mathbb{T}$ and $\deg(s)<\deg(q)$. Let $g_1, g_2, \ldots $ be a sequence in $\Dom(T_\omega)$ such that in $H^p$ we have
\begin{equation}\label{conv}
g_n\to g\in H^p \ands T_\omega g_n\to h\in H^p \quad\mbox{as $n\to\infty$}.
\end{equation}
We have to prove that $g \in \Dom(T_\omega)$ and $T_\omega g =h$. Applying Lemma \ref{L:domain} above, we know that $\omega g_n=h_n+r_n/q$ with $h_n\in H^p$ and $r_n\in\cP_{\deg(q)-1}$. Moreover $h_n=T_\om g_n\to h$. Using \eqref{conv} it follows that
\[
r_n =sg_n-qh_n\to sg -q h=:r  \quad \mbox{as $n\to \infty$, with convergence in $H^p$}.
\]
Since $\deg(r_n)<\deg(q)$ for each $n$,  it follows that  $r=\lim_{n\to \infty} r_n$ is also a polynomial with $\deg(r)<\deg(q)$. Thus $r/q\in \Rat_0\mathbb(\BT)$, and $r=sg -qh$ implies that $\omega g= h+r/q$. Thus $g \in \Dom(T_\omega)$ and $T_\omega g =h$. We conclude that $T_\omega$ is closed.
\end{proof}

\begin{proof}[\bf Proof of Proposition \ref{P:welldefclosed}]
In the preceding two parts of the proof we showed all claims except that $\Dom(T_\om)$ contains $\cP$ and is invariant under $T_z$. Again write $\om$ as $\om=\om_0+\om_1$ with $\om_0\in\Rat_0(\BT)$ and $\om_1\in\Rat$ with no poles on $\BT$. Let $r\in\cP$. Then $\om r =\om_0 r + \om_1 r$. We have $\om_1 r\in \Rat$ with no poles on $\BT$, hence $\om_1 r\in L^p$. By Euclidean division, $\om_0 r=\psi +r_0$ with $\psi\in \Rat_0(\BT)$ (having the same denominator as $\om_0$) and $r_0\in\cP\subset L^p$. Hence $\om r \in L^p + \Rat_0(\BT)$, so that $r\in \Dom(T_\om)$. This shows $\cP\subset \Dom(T_\om)$. Finally, we have $\Dom(T_\om)= \Dom(T_{\om_0})$ and it follows by the last claim of Lemma \ref{L:domain} that $\Dom(T_{\om_0})$ is invariant under $T_z$.
\end{proof}

\section{Intermezzo:\ Division with remainder by a polynomial in $H^p$}\label{S:Division}

Let $s\in\cP$, $s\not\equiv 0$. The Euclidean division algorithm says that for any $v\in\cP$ there exist unique $u,r\in\cP$ with $v=us+r$ and $\deg(r)<\deg(s)$. If $\deg(v)\geq \deg(s)$, then $\deg(v)=\deg(s)+\deg(u)$. We can reformulate this as:
\[
\cP=s \cP\dot{+} \cP_{\deg(s)-1} \ands
\cP_n=s\cP_{n-\deg(s)} \dot{+}\cP_{\deg(s)-1},\ \ n\geq \deg(s),
\]
with $\dot{+}$ indicating direct sum. What happens when $\cP$ is replaced with a class of analytic functions, say by $H^p$, $p\geq 0$? That is, for $s\in\cP$, $s\not\equiv 0$, when do we have
\begin{equation}\label{Hp-euclid}
H^p= s H^p+\cP_{\deg(s)-1}?
\end{equation}
Since $\cP\subset H^p$, we know that $\cP=s \cP\dot{+} \cP_{\deg(s)-1}\subset s H^p+\cP_{\deg(s)-1}$. Hence $s H^p+\cP_{\deg(s)-1}$ contains a dense (non-closed) subspace of $H^p$. Thus question \eqref{Hp-euclid} is equivalent to asking whether $s H^p+\cP_{\deg(s)-1}$ is closed. The following theorem provides a full answer to the above question.

\begin{theorem}\label{T:main1}
Let $s\in\cP$, $s\not\equiv 0$. Then $H^p= s H^p+\cP_{\deg(s)-1}$ if and only if $s$ has no roots on the unit circle $\BT$.
\end{theorem}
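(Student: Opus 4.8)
The plan is to prove the two implications separately; throughout I use, as observed just before the statement, that $\cP\subset sH^p+\cP_{\deg(s)-1}$, so only surjectivity of the sum onto $H^p$ is at stake, and I work in the range $1\le p<\infty$.

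\emph{Sufficiency.} Suppose $s$ has no roots on $\BT$ and factor $s=s_-s_+$ with all roots of $s_-$ in $\BD$ and all roots of $s_+$ outside $\overline{\BD}$ (the nonzero constant factor may go into either factor). Since $s_+$ has no zeros on $\overline{\BD}$, $1/s_+$ is analytic and bounded there, so for every $f\in H^p$ one has $f/s_+\in H^p$ and hence $f=s_+\cdot(f/s_+)\in s_+H^p$; thus $s_+H^p=H^p$. Therefore $sH^p+\cP_{\deg(s)-1}=s_-H^p+\cP_{\deg(s)-1}\supseteq s_-H^p+\cP_{\deg(s_-)-1}$, and it suffices to show $s_-H^p+\cP_{\deg(s_-)-1}=H^p$. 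Given $f\in H^p$, let $r\in\cP_{\deg(s_-)-1}$ be the Hermite interpolant matching $f$ and its derivatives at each zero of $s_-$ up to the multiplicity of that zero; the interpolation problem is nonsingular because the number of conditions equals $\deg(s_-)=\dim\cP_{\deg(s_-)-1}$ and a nonzero polynomial of degree $<\deg(s_-)$ cannot vanish at the zeros of $s_-$ with total multiplicity $\deg(s_-)$. By construction $f-r$ vanishes at each zero $\beta\in\BD$ of $s_-$ to order at least its multiplicity $m_\beta$. I would then peel off one zero at a time: with $b_\beta(z)=(z-\beta)/(1-\bar\beta z)$ the associated Blaschke factor one has $(z-\beta)^{m_\beta}=b_\beta^{m_\beta}(1-\bar\beta z)^{m_\beta}$; dividing an $H^p$ function vanishing at $\beta$ to order $\ge m_\beta$ by $b_\beta^{m_\beta}$ keeps it in $H^p$ (same boundary modulus), and multiplying by the bounded analytic function $(1-\bar\beta z)^{-m_\beta}$ does too. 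Iterating over the zeros of $s_-$ yields $(f-r)/s_-\in H^p$, i.e.\ $f\in s_-H^p+\cP_{\deg(s_-)-1}$.

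\emph{Necessity.} Suppose $s$ has a root $\alpha\in\BT$ of multiplicity $m\ge1$, and write $s=(z-\alpha)^m\tilde s$ with $\tilde s\in\cP$. Then $sH^p\subseteq(z-\alpha)^mH^p$, so $sH^p+\cP_{\deg(s)-1}\subseteq(z-\alpha)^mH^p+\cP_{\deg(s)-1}$, and applying the rotation $z\mapsto\bar\alpha z$ (an $H^p$-isometry preserving each $\cP_n$) it is enough to exhibit some $f\in H^p$ not lying in $(z-1)^mH^p+\cP_N$, for any fixed finite $N$. I would take $f(z)=\log(1-z)$ (principal branch), which is analytic on $\BD$ and lies in $H^p$ for every $p<\infty$ since $\theta\mapsto|\log|1-e^{i\theta}||$ is integrable to every power. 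If $f=(z-1)^mg+r$ with $g\in H^p$ and $r$ a polynomial, then the $L^p(\BT)$ boundary function of $g$ equals $(\log(1-z)-r(z))/(z-1)^m$ almost everywhere; near $z=1$ we have $|\log(1-z)-r(z)|\ge\tfrac{1}{2}|\log|1-z||$ once $|1-z|$ is small (as $r$ is bounded there), so $|g(z)|\ge\tfrac{1}{2}|\log|1-z||\,|1-z|^{-m}$ on a neighbourhood of $1$ in $\BT$, and since $\int_0^{\delta}|\log t|^p\,t^{-mp}\,dt=\infty$ for $m\ge1$ (and $p\ge1$) this contradicts $g\in L^p(\BT)$. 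Hence no such decomposition exists, $f\notin(z-1)^mH^p+\cP_N$, and tracing the reductions back gives $sH^p+\cP_{\deg(s)-1}\ne H^p$.

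The decisive step is the necessity direction: one must certify, by a single explicit $H^p$ function, failure of the decomposition \emph{against every polynomial correction simultaneously}, and the mechanism is that a logarithmic boundary singularity cannot be removed by any polynomial while the quotient by $(z-1)^m$ stays $L^p$-integrable. (A more operator-theoretic alternative: if $H^p=sH^p+\cP_{\deg(s)-1}$ then, since $\cP_{\deg(s)-1}$ is finite dimensional and $M_s\colon H^p\to H^p$ is bounded and injective, $sH^p$ is closed; the open mapping theorem then forces $\|sf\|_p\ge c\|f\|_p$ for some $c>0$, which is contradicted by testing against normalized Cauchy kernels concentrating at a boundary zero of $s$. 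I would keep the explicit-witness argument as the primary proof, being the more self-contained.)
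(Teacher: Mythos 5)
Your proof is correct, but it follows a genuinely different route from the paper's on both implications. For sufficiency, the paper does not divide explicitly: it quotes Vukoti\'c's theorem that $sH^p$ is closed in $H^p$ exactly when $|s|$ is bounded away from zero on $\BT$, notes that the sum of a closed subspace and a finite-dimensional one is closed, and concludes from density of $\cP$ in $sH^p+\cP_{\deg(s)-1}$; you instead perform the division by hand, via Hermite interpolation at the zeros of $s_-$ in $\BD$ followed by Blaschke-factor division. Your argument is more self-contained (no appeal to the closed-range theorem or to a density-plus-closedness step) and in fact establishes the sharper identity $H^p=s_-H^p+\cP_{\deg(s_-)-1}$, which the paper only extracts later as part of Proposition \ref{P:directsum}. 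For necessity, the paper first reduces via Lemma \ref{L:1} to showing $(z-\al)H^p+\BC\neq H^p$ and then builds an abstract witness $f=(z-1)h\in\cW_+$ with $h$ analytic, monotone coefficients, and $h\notin H^p$, killing the constant $c$ by a Taylor-coefficient argument; your witness $\log(1-z)$ is explicit, works against all polynomial corrections simultaneously via an $L^p$ boundary-integrability estimate, and needs no reduction to the single-factor case. The one thing the paper's choice buys that yours does not is that its witness lies in $\cW_+$, which is exploited again in the proof of Theorem \ref{T:main2}; conversely, your construction is arguably more transparent. (Your parenthetical operator-theoretic alternative is also sound, since a bounded operator whose range has finite algebraic codimension automatically has closed range, but the phrase ``since $\cP_{\deg(s)-1}$ is finite dimensional \ldots\ $sH^p$ is closed'' deserves that justification rather than being stated as if immediate.)
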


Another question is, even if $s$ has no roots on $\BT$, whether $s H^p+\cP_{\deg(s)-1}$ is a direct sum. This does not have to be the case. In fact, if $s$ has only roots outside $\BT$, then $1/s\in H^\infty$ and $s H^p=H^p$, so that $s H^p+\cP_{\deg(s)-1}$ is not a direct sum, unless if $s$ is constant. Clearly, a similar phenomenon occurs if only part of the roots of $s$ are outside $\BT$. In case all roots of $s$ are inside $\BT$, then the sum is a direct sum.

\begin{proposition}\label{P:directsum}
Let $s\in\cP$, $s\not\equiv 0$ and having no roots on $\BT$. Write $s=s_- s_+$ with $s_-,s_+\in\cP$ having roots inside and outside $\BT$, respectively. Then $H^p=s H^p+\cP_{\deg(s_-)-1}$ is a direct sum decomposition of $H^p$. In particular, $s H^p+\cP_{\deg(s)-1}$ is a direct sum if and only if $s$ has all its roots inside $\BT$.
\end{proposition}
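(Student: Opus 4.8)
The plan is to prove the direct sum decomposition $H^p = sH^p + \cP_{\deg(s_-)-1}$ by verifying the two defining properties of a direct sum: that the two summands together span $H^p$, and that their intersection is trivial. For the spanning part, note that since $s_+$ has all its roots outside $\overline{\BD}$, we have $1/s_+\in H^\infty$, so $s_+ H^p = H^p$; combining this with Theorem \ref{T:main1} applied to $s_-$ (which has all its roots inside $\BT$, in particular none on $\BT$) gives $H^p = s_- H^p + \cP_{\deg(s_-)-1}$. Then $sH^p + \cP_{\deg(s_-)-1} = s_-(s_+H^p) + \cP_{\deg(s_-)-1} = s_- H^p + \cP_{\deg(s_-)-1} = H^p$, which handles the span. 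So the real content is the triviality of the intersection.

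For the intersection, suppose $sh + r = 0$ with $h\in H^p$ and $r\in\cP_{\deg(s_-)-1}$; equivalently $sh = -r$, so $s_- s_+ h = -r$. Since the roots of $s_+$ all lie outside $\overline{\BD}$ and $s_+ h\in H^p$, we can write $h_0 := s_+ h\in H^p$, so $s_- h_0 = -r$ with $\deg(r) < \deg(s_-)$. Now I would argue that $h_0$ must be zero: evaluating along the boundary or using that $s_-$ has all its roots in $\BD$, the function $s_- h_0$ lies in $s_- H^p$, and a polynomial of degree $< \deg(s_-)$ that is divisible (as an analytic function on $\BD$) by $s_-$ must vanish at each root of $s_-$ with at least the right multiplicity, forcing $r\equiv 0$ and hence $h_0 = 0$, hence $h = 0$. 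Alternatively, and perhaps more cleanly, I would invoke the direct sum version of Euclidean division for polynomials together with a density/closedness argument: $s_- H^p \cap \cP$ is exactly $s_-\cP$ (functions in $H^p$ divisible by $s_-$ and polynomial are polynomial multiples of $s_-$, since $1/s_-$ has poles only in $\BD$ so $f/s_-$ analytic on $\BD$ forces the roots of $s_-$ to be roots of $f$), and $s_-\cP \cap \cP_{\deg(s_-)-1} = \{0\}$ by degree count; since $r$ is a polynomial and $r = -s_- h_0 \in s_- H^p$, we get $r\in s_- H^p\cap\cP = s_-\cP$, so $r = 0$.

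For the final sentence of the proposition, the ``if'' direction is immediate: if all roots of $s$ lie inside $\BT$ then $s_+$ is constant, $\deg(s_-) = \deg(s)$, and the decomposition just established is exactly $H^p = sH^p + \cP_{\deg(s)-1}$ as a direct sum. For the ``only if'' direction, suppose $s$ has a root outside $\BT$ (the case of a root on $\BT$ being excluded by hypothesis, or handled by Theorem \ref{T:main1} since then the sum is not even all of $H^p$ in the non-direct sense—but here we assume no roots on $\BT$). Then $\deg(s_+)\geq 1$, so $\deg(s_-) < \deg(s)$, and one has $sH^p + \cP_{\deg(s)-1} \supseteq sH^p + \cP_{\deg(s_-)-1} = H^p$ together with $\cP_{\deg(s)-1}\supsetneq\cP_{\deg(s_-)-1}$; since $H^p$ is already exhausted by the smaller pieces, the extra polynomials in $\cP_{\deg(s)-1}\setminus\cP_{\deg(s_-)-1}$ must lie in $sH^p + \cP_{\deg(s_-)-1}$, producing a nontrivial relation and showing the sum $sH^p + \cP_{\deg(s)-1}$ is not direct.

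I expect the main obstacle to be the intersection argument: making rigorous the step ``$f\in H^p$ with $f/s_-$ analytic on $\BD$ implies $f$ is divisible by $s_-$ as a polynomial when $f$ itself is a polynomial.'' This is really the statement that the roots of $s_-$ inside $\BD$ must be zeros of $f$ of the appropriate multiplicity, which follows from standard facts about zeros of $H^p$ functions (an $H^p$ function cannot have a pole in $\BD$), but the multiplicity bookkeeping when $s_-$ has repeated roots needs a short careful argument. Everything else—$1/s_+\in H^\infty$, the reduction $sH^p = s_- H^p$, and the degree counts—is routine.
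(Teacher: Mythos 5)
Your proposal is correct and follows essentially the same route as the paper: the spanning part via $s_+H^p=H^p$ and Theorem \ref{T:main1} applied to $s_-$, the directness via evaluating the polynomial identity at the roots of $s_-$ inside $\BD$ with multiplicities and a degree count, and the ``only if'' part via the double representation of polynomials of degree between $\deg(s_-)$ and $\deg(s)$. The multiplicity bookkeeping you flag as the main obstacle is handled in the paper exactly as you suggest, by differentiating the identity $r_1-r_2=s(h_2-h_1)$ up to order $n-1$ at each root of $s_-$ of multiplicity $n$.
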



We also consider the question whether there are functions in $H^p$ that are not in $s H^p+\cP_{\deg(s)-1}$ and that can be divided by another polynomial $q$. This turns out to be the case precisely when $s$ has a root on $\BT$ which is not a root of $q$.

\begin{theorem}\label{T:main2}
Let $s,q\in\cP$, $s,q\not\equiv 0$. Then there exists a $f\in q H^p$ which is not in $s H^p+\cP_{\deg(s)-1}$ if and only if $s$ has a root on $\BT$ which is not a root of $q$.
\end{theorem}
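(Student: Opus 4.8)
To prove Theorem~\ref{T:main2} the plan is to establish the two implications separately; the substantive one is producing the witness $f$, so I would begin there. Suppose $s$ has a zero $\al\in\BT$ that is not a zero of $q$ of multiplicity at least its multiplicity in $s$, and let $m$ and $n$ be the multiplicities of $\al$ in $s$ and in $q$ respectively, so $m>n\ge 0$; write $s=(z-\al)^m\what s$ and $q=(z-\al)^n\what q$ with $\what s(\al)\neq 0\neq\what q(\al)$. Fix $\ga$ with $0<\ga<1/p$ and set $h(z)=(1-\overline{\al}z)^{-\ga}$. Since $1-\overline{\al}z$ has positive real part on $\BD$, $h$ is analytic there, and because $|1-\overline{\al}z|=|z-\al|$ on $\BT$ and $\ga p<1$, a standard estimate gives $h\in H^p$; hence $f:=qh\in qH^p$. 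I claim $f\notin sH^p+\cP_{\deg(s)-1}$.

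Suppose otherwise, say $f=sg+r$ with $g\in H^p$ and $r\in\cP_{\deg(s)-1}$. Then $(z-\al)^n\big(\what q\,h-(z-\al)^{m-n}\what s\,g\big)=qh-sg=r$, and $w:=\what q\,h-(z-\al)^{m-n}\what s\,g$ lies in $H^p$, since a polynomial times an $H^p$ function is again in $H^p$. Thus $r/(z-\al)^n=w\in H^p\subset L^p$; if $(z-\al)^n$ did not divide $r$ this quotient would be a rational function with a pole at $\al\in\BT$, contradicting Lemma~\ref{L:h2_rat}. Hence $r=(z-\al)^n\rho$ for some $\rho\in\cP$, and the identity becomes $(z-\al)^{m-n}\what s\,g=\what q\,h-\rho$. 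Now multiply by the bounded analytic function $(1-\overline{\al}z)^{\ga}$ and restrict to the radius $z=t\al$, $t\uparrow 1$, where $1-\overline{\al}z=1-t$ and $z-\al=-\al(1-t)$: this gives $(-\al)^{m-n}(1-t)^{m-n+\ga}\what s(t\al)\,g(t\al)=\what q(t\al)-\rho(t\al)(1-t)^{\ga}$. The right-hand side tends to $\what q(\al)\neq 0$ and $\what s(t\al)\to\what s(\al)\neq 0$, so $|g(t\al)|$ is bounded below by a constant multiple of $(1-t)^{-(m-n+\ga)}$ for $t$ near $1$. But every $g\in H^p$ satisfies the elementary pointwise bound $|g(z)|\le C_p\|g\|_{H^p}(1-|z|)^{-1/p}$, and since $m-n\ge 1$ we have $m-n+\ga>1>1/p$; the two growth rates are incompatible as $t\uparrow 1$, a contradiction. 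This proves the forward implication.

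For the converse I would argue contrapositively: assume that every zero of $s$ on $\BT$ is a zero of $q$ of at least the same multiplicity, which is exactly the statement that $s_{\BT}\mid q$, where $s=s_{\BT}s'$ with $s_{\BT}$ carrying all zeros of $s$ on $\BT$ and $s'$ having no zeros on $\BT$. Write $q=s_{\BT}u$ with $u\in\cP$. Given any $h\in H^p$, Theorem~\ref{T:main1} applied to $s'$ (which has no zeros on $\BT$) provides $g\in H^p$ and $\si\in\cP_{\deg(s')-1}$ with $uh=s'g+\si$; then $qh=s_{\BT}uh=s g+s_{\BT}\si$, and $\deg(s_{\BT}\si)\le\deg(s_{\BT})+\deg(s')-1=\deg(s)-1$, so $qh\in sH^p+\cP_{\deg(s)-1}$. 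Since $h$ was arbitrary, $qH^p\subseteq sH^p+\cP_{\deg(s)-1}$, so no $f$ as in the statement exists; this completes the plan.

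The forward implication is where I expect the only real difficulty, and it has two delicate points: cancelling the factor $(z-\al)^n$ from $r$, which is precisely where Lemma~\ref{L:h2_rat} is indispensable, and converting the forced singularity of $g$ at the boundary point $\al$ into a genuine violation of $H^p$-membership. The radial argument above, combined with the standard $H^p$ growth estimate, is the cleanest route to the latter; one can instead estimate $\sup_{0<r<1}\int_0^{2\pi}|g(re^{i\te})|^p\,\textup{d}\te$ directly, but that is more cumbersome.
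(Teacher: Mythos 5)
Your proof is correct, and the existence half takes a genuinely different route from the paper's. The paper produces its witness abstractly: Lemma \ref{L:3} constructs $\wtil f\in\cW_+$ lying outside $(z-\al)H^p+\BC$ (via a series with slowly decreasing coefficients), Lemma \ref{L:1} gives the inclusion $sH^p+\cP_{\deg(s)-1}\subset(z-\al)H^p+\BC$, and a short algebraic computation shows that multiplication by $q$ preserves non-membership in $(z-\al)H^p+\BC$ when $q(\al)\neq0$. You instead take the explicit function $h=(1-\overline{\al}z)^{-\ga}$ with $0<\ga<1/p$ and turn a putative identity $qh=sg+r$ into a quantitative contradiction: after stripping the factor $(z-\al)^n$ from $r$ via Lemma \ref{L:h2_rat} (exactly the right tool, used the same way the paper uses it elsewhere), the function $g$ is forced to blow up radially at $\al$ like $(1-t)^{-(m-n+\ga)}$, which violates the universal $H^p$ growth bound $O((1-|z|)^{-1/p})$. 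Your converse is essentially the paper's argument (factor out the unimodular part of $s$ and invoke Theorem \ref{T:main1}). What your approach buys is a sharper, and in fact corrected, statement: you characterize existence of the witness by some root of $s$ on $\BT$ having strictly larger multiplicity in $s$ than in $q$, rather than by its failing to be a root of $q$ at all. This is the right formulation --- as literally stated the theorem fails for $s=(z-1)^2$, $q=z-1$, since any identity $(z-1)h=(z-1)^2g+a(z-1)+b$ forces $b=0$ by Lemma \ref{L:h2_rat} and hence $h\in(z-1)H^p+\BC$, which Lemma \ref{L:3} shows need not hold; correspondingly, the paper's own converse argument silently assumes that $s_0$ divides $q$ when it writes $q=s_0\what{q}$. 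Since the theorem is only applied with $s$ and $q$ co-prime, nothing downstream is affected, but your multiplicity bookkeeping is the version one should record.
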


In order to prove the above results we first prove a few lemmas.

\begin{lemma}\label{L:1}
Let $s\in \cP$ and $\al\in\BC$ a root of $s$. Then $s H^p+\cP_{\deg(s)-1}\subset (z-\al)H^p +\BC$.
\end{lemma}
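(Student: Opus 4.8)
Since $\alpha\in\BC$ is a root of $s$, we have $\deg(s)\geq 1$ and we may factor $s=(z-\alpha)\wtil{s}$ for some $\wtil{s}\in\cP$ with $\deg(\wtil{s})=\deg(s)-1$. The plan is to show the inclusion summand by summand: namely $sH^p\subset (z-\al)H^p$ and $\cP_{\deg(s)-1}\subset (z-\al)H^p+\BC$, and then add.

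For the first summand, note that multiplication by the polynomial $\wtil{s}$ maps $H^p$ into $H^p$ (polynomials lie in $H^\infty$, so $\wtil{s}H^p\subset H^p$). Hence $sH^p=(z-\al)\wtil{s}H^p\subset (z-\al)H^p$, which is contained in $(z-\al)H^p+\BC$.

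For the second summand, take any $r\in\cP_{\deg(s)-1}$. Euclidean division of $r$ by $z-\al$ gives $r=(z-\al)r_1+r(\al)$ with $r_1\in\cP$ and $r(\al)\in\BC$ (with $r_1=0$ if $r$ is constant). Since $r_1\in\cP\subset H^p$, this shows $r\in (z-\al)H^p+\BC$. Combining the two inclusions yields $sH^p+\cP_{\deg(s)-1}\subset (z-\al)H^p+\BC$, as claimed.

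I do not anticipate a genuine obstacle here: the statement is elementary once $s$ is factored through its root, the only points requiring a word of care being that $\deg(s)\geq 1$ (so $\cP_{\deg(s)-1}$ makes sense and at least contains the constants) and that multiplication by a polynomial preserves $H^p$. The lemma will then be used, presumably, to obtain uniform lower bounds or non-surjectivity obstructions at points $\al\in\BT$ that are roots of $s$, by exploiting that $(z-\al)H^p+\BC$ is a proper closed-deficiency subspace of $H^p$ when $\al\in\BT$.
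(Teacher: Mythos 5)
Your proof is correct and is essentially the paper's argument: the paper likewise factors $s=(z-\al)s_0$ and writes $r=(z-\al)r_0+c$, combining the two in a single display rather than summand by summand. No issues.
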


\begin{proof}[\bf Proof]
Since $s(\al)=0$, we have $s(z)=(z-\al)s_0(z)$ for some $s_0\in\cP$, $\deg(s_0)=\deg(s)-1$. Let $f=s g+r \in s H^p+\cP_{\deg(s)-1}$. Then $r(z)=(z-\al)r_0(z)+c$ for a $r_0\in\cP$ and $c\in\BC$. This yields
\[
f(z)=s(z) g(z)+r(z)=(z-\al)(s_0(z) g(z)+r_0(z))+c\in (z-\al) H^p +\BC. \qedhere
\]
\end{proof}

\begin{lemma}\label{L:3}
Let $\al\in\BT$. Then there exists a $f\in\cW_+$ such that $f\not\in (z-\al) H^p+\BC$.
\end{lemma}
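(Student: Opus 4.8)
The plan is to exhibit the required function explicitly as a fractional power of $1-z$, after first rotating the problem to the point $\al=1$. The substitution $h(z)\mapsto h(\al z)$ is a bijection of $\cW_+$ (it multiplies the $n$-th Taylor coefficient by $\al^n$, and $|\al|=1$) and an isometric bijection of $H^p$, and it carries $(z-\al)H^p+\BC$ onto $(z-1)H^p+\BC$, since $(z-\al)g(z)+c$ is mapped to $\al(z-1)g(\al z)+c$. Hence it maps the complement of $(z-\al)H^p+\BC$ in $\cW_+$ onto the complement of $(z-1)H^p+\BC$, so it suffices to find $f\in\cW_+$ with $f\notin(z-1)H^p+\BC$, the function for general $\al$ being $f(\bar\al z)$. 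The crucial observation is that if $f=(z-1)g+c$ with $g\in H^p$ and $c\in\BC$, then $g(z)=\frac{f(z)-c}{z-1}$ on $\BD$, which is automatically analytic there because the only pole of $(z-1)^{-1}$ lies on $\BT$; so such a representation can only fail because this explicit $g$ leaves $H^p$, and I will force this by making its boundary values fail to be $p$-integrable near $z=1$.

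Concretely, set $\beta:=1-\tfrac1p\in(0,1)$ and $f(z):=(1-z)^\beta$ (principal branch, which is analytic on $\BD$ and continuous on $\overline{\BD}$ since $1-z$ stays in the right half-plane). Then $f\in\cW_+$: from $f(z)=\sum_{n\ge0}(-1)^n\binom{\beta}{n}z^n$ and the standard estimate $\binom{\beta}{n}=O(n^{-1-\beta})$ valid for $\beta>0$, the Taylor coefficients are absolutely summable, and the sum represents the principal branch with $f(1)=0$. Now assume, for contradiction, that $f=(z-1)g+c$ with $g\in H^p$ and $c\in\BC$, so that $g(z)=\dfrac{(1-z)^\beta-c}{z-1}$ on $\BD$. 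This $g$ is continuous on $\overline{\BD}\setminus\{1\}$ (continuity of $(1-z)^\beta$ on $\overline{\BD}$ and of $(z-1)^{-1}$ on $\overline{\BD}\setminus\{1\}$), so $g(re^{i\theta})\to g(e^{i\theta}):=\frac{(1-e^{i\theta})^\beta-c}{e^{i\theta}-1}$ as $r\uparrow1$ for every $\theta\ne0$. Using $|1-e^{i\theta}|\asymp|\theta|$ for small $\theta$: if $c\ne0$ then $|g(e^{i\theta})|\asymp|\theta|^{-1}$ (the numerator tends to $-c\ne0$), while if $c=0$ then $g(z)=-(1-z)^{\beta-1}$ and $|g(e^{i\theta})|\asymp|\theta|^{\beta-1}=|\theta|^{-1/p}$. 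In either case $|g(e^{i\theta})|^p\gtrsim|\theta|^{-1}$ near $\theta=0$, hence $\int_{\BT}|g(e^{i\theta})|^p\,\textup{d}\theta=\infty$. But $g\in H^p$ forces $\frac1{2\pi}\int_{\BT}|g(re^{i\theta})|^p\,\textup{d}\theta\le\|g\|_{H^p}^p$ for all $r<1$, so Fatou's lemma yields $\frac1{2\pi}\int_{\BT}|g(e^{i\theta})|^p\,\textup{d}\theta\le\|g\|_{H^p}^p<\infty$ --- a contradiction. Therefore $f\notin(z-1)H^p+\BC$, and undoing the rotation proves the lemma for every $\al\in\BT$.

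The only mildly delicate point is the identification of the boundary values of $g$: one must know that the radial limits of the hypothetical $H^p$-function $g$ agree with the elementary expression above away from $\theta=0$. Here this is immediate from the continuity statements just used, so one does not even need the general boundary-value theory of $H^p$, only the trivial bound on $M_p(r,g)$ together with Fatou. The split on $c$ is pure bookkeeping, and the exponent $\beta=1-\tfrac1p$ is tuned so that the $c=0$ case is exactly borderline; in fact any $\beta\in(0,1-\tfrac1p]$ works, since that case requires $p(1-\beta)\ge1$ while the case $c\ne0$ requires only $p\ge1$. One could instead first observe that a representation can exist only with $c=f(1)=0$, reducing everything to the single claim $(1-z)^{\beta-1}\notin H^p$, but this rests on the same boundary estimate and saves little.
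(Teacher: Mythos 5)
Your proof is correct. It shares the paper's skeleton---reduce to $\al=1$ by the rotation $f(z)\mapsto f(\al z)$, then exhibit $f=(z-1)h$ with $f\in\cW_+$ but $h\notin H^p$---yet the execution is genuinely different at both ends. The paper keeps $h$ abstract: it takes any $h(z)=\sum h_nz^n$ with $h_n\downarrow 0$ and $h\notin H^p$, gets $f=(z-1)h\in\cW_+$ by a telescoping estimate on the coefficients, and then eliminates the constant $c$ in a hypothetical representation $f=(z-1)g+c$ by a purely coefficient-theoretic argument: $c/(z-1)=h-g$ has all Taylor coefficients equal to $-c$, while those of $h$ tend to zero by construction and those of $g\in H^p\subset H^1$ tend to zero by Riemann--Lebesgue, forcing $c=0$ and hence $h=g\in H^p$, a contradiction. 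You instead choose the concrete $f(z)=(1-z)^{1-1/p}$ (which is in fact an instance of the paper's scheme with $h=-(1-z)^{-1/p}$, whose coefficients decrease to zero), certify $f\in\cW_+$ via the binomial asymptotics $\binom{\beta}{n}=O(n^{-1-\beta})$, and dispose of both cases $c=0$ and $c\neq 0$ in one stroke by a boundary integrability estimate combined with Fatou's lemma on circles. What each approach buys: the paper's argument is softer and needs no asymptotics or boundary-value considerations, only coefficient decay; yours trades that for a completely explicit witness and a quantitative reason for failure ($|g(e^{i\te})|^p\gtrsim|\te|^{-1}$ near the singular point), at the cost of slightly heavier analytic input. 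Both are complete; your handling of the boundary identification (continuity off $\te=0$ plus the trivial bound on the integral means, rather than the full a.e.\ radial-limit theory) is appropriately economical.
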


\begin{proof}[\bf Proof]
By rotational symmetry we may assume without loss of generality that $\alpha=1$.
Let $h_n\downarrow 0$ such that $h(z)=\sum_{n=0}^\infty h_n z^n$ is analytic on $\BD$ but $h\not\in H^p$. Define $f_0,f_1,\ldots$ recursively by
\[
f_0=-h_0,\quad f_{n+1}=(h_n-h_{n+1}),\ n\geq 0.
\]
Then $f(z)=(z-1)h(z)$ and $\sum_{k=0}^N |f_k|=2h_0-h_N\to 2h_0$. Hence the Taylor coefficients of $f(z)=\sum_{k=0}^\infty f_k z^k$ are absolutely summable and thus $f\in\cW_+$.

Now assume $f\in (z-1)H^p+\BC$, say $f=(z-1)g +c$ for $g\in H^p$ and $c\in\BC$. Then
$h=g+c/(z-1)$. Since the Taylor coefficients of $c/(z-1)$ have to go to zero, we obtain $c=0$ and
$h=g$, which contradicts the assumption $h\notin H^p$.
%
\end{proof}

\begin{proof}[\bf Proof of Theorem \ref{T:main1}]
Assume $s$ has no roots on $\BT$. Since $s\in \cP\subset H^\infty$, we know from Theorem 8 of \cite{V03} that the range of the multiplication operator of $s$ on $H^p$ is closed (i.e., $s H^p$ closed in $H^p$) if and only if $|s|$ is bounded away from zero on $\BT$. Since $s$ is a polynomial, the latter is equivalent to $s$ having no roots on $\BT$. Hence $s H^p$ is closed. Since $\cP_{\deg(s)-1}$ is a finite dimensional subspace of $H^p$, and thus closed, we obtain that $s H^p+\cP_{\deg(s)-1}$ is closed \cite[Chapter 3, Proposition 4.3]{C90}. Also, $s H^p+\cP_{\deg(s)-1}$ contains the dense subspace $\cP$ of $H^p$, therefore $s H^p+\cP_{\deg(s)-1}= H^p$.

Conversely, assume $s$ has a root $\al\in\BT$. Then by Lemmas \ref{L:1} and \ref{L:3} we know  $s H^p+\cP_{\deg(s)-1}\subset (z-\al)H^p+\BC\neq H^p$.
\end{proof}

\begin{proof}[\bf Proof of Proposition \ref{P:directsum}]
Assume $s\in\cP$ has no roots on $\BT$. Write $s=s_- s_+$ with $s_-,s_+\in\cP$, $s_-$ having only roots inside $\BT$ and $s_+$ having only roots outside $\BT$. Assume $s$ has roots outside $\BT$, i.e., $\deg(s_+)>0$. Then $1/s_+$ is in $H^\infty$ and $s_+ H^p=H^p$ and hence $s H^p=s_- H^p$. Using Theorem \ref{T:main1}, this implies that
\[
H^p= s_- H^p+\cP_{\deg(s_-)-1}= s H^p+\cP_{\deg(s_-)-1}.
\]
Next we show that $s H^p+\cP_{\deg(s_-)-1}$ is a direct sum. Let $f=s h_1+r_1 =s h_2+r_2\in s H^p +\cP_{\deg(s_-)-1}$ with $h_1,h_2\in H^p$, $r_1,r_2\in\cP_{\deg(s_-)-1}$. Then $r_1-r_2= s(h_2-h_1)$. Clearly, each root $\al$ of $s_-$ with multiplicity $n$, is also a root of $s$ with multiplicity $n$. Evaluate both sides of $r_1-r_2= s(h_2-h_1)$ at $\al$, possible since $\al\in\BD$, as well as the identities obtained by taking derivatives on both sides up to order $n-1$, this yields $\frac{\textup{d}^m}{\textup{d}z^m} (r_1-r_2)(\al)=0$ for $m=0\ldots n-1$. Since $\deg(r_1-r_2)<\deg(s_-)$, this can only occur when $r_1-r_2\equiv 0$, i.e., $r_1=r_2$. We thus arrive at $s(h_2-h_1)\equiv 0$. Since $s$ has no roots on $\BT$, we have $1/s\in L^\infty$ so that $h_2-h_1= s^{-1} s(h_2-h_1)\equiv 0$ as a function in $L^p$. Hence $h_1=h_2$ in $L^p$, but then also $h_1=h_2$ in $H^p$. Hence we have shown $s H^p+\cP_{\deg(s_-)-1}$ is a direct sum.

In case $s$ has all its roots inside $\BT$, we have $s=s_-$ and thus $\cP_{\deg(s)-1}=\cP_{\deg(s_-)-1}$ so that $sH^p +\cP_{\deg(s)-1}$ is a direct sum. Conversely, if $s$ has a root outside $\BT$, we have $\deg(s_-)<\deg(s)$ and the identity $sH^p +\cP_{\deg(s)-1}=sH^p +\cP_{\deg(s_-)-1}$ shows that any $r\in\cP_{\deg(s)-1}$ with $\deg(r)\geq\deg(s_-)$ can be written as $r=0+r\in sH^p +\cP_{\deg(s)-1}$ and as $r=sh+r'\in sH^p +\cP_{\deg(s)-1}$ with $\deg(r')<\deg(s_-)$ and $h\in H^p$, $h\not \equiv 0$. Hence $sH^p +\cP_{\deg(s)-1}$ is not a direct sum.
\end{proof}

\begin{proof}[\bf Proof of Theorem \ref{T:main2}]
Assume all roots of $s$ on $\BT$ are also roots of $q$. Let $f=q \wtil{f} \in q H^p$. Factor $s=s_+s_0s_-$ as before. Then $q=s_0 \hat{q}$ for some $\hat{q}\in\cP$. From Theorem \ref{T:main1} we know that $s_-s_+ H^p +\cP_{\deg(s_-s_+)-1}=H^p$. Hence $\hat{q}\wtil{f}=s_-s_+ \what{f}+r$ with $\what{f}\in H^p$ and $r\in\cP$ with $\deg(r)< \deg(s_-s_+)$. Thus
\[
fq\wtil{f}=s_0 \hat{q}\wtil{f} =s \what{f} + s_0 r \in s H^p+\cP_{\deg(s)-1},
\]
where we used $\deg(s_0 r)=\deg(s_0)+\deg(r)<\deg(s_0)+\deg(s_-s_+)=\deg(s)$. Hence $q H^p\subset sH^p+\cP_{\deg(s)-1}$.

Conversely, assume $\al\in\BT$ such that $s(\al)=0$ and $q(\al)\neq 0$. By Lemma \ref{L:3} there exists a $\wtil{f}\in \cW_+\subset H^p$ which is not in $(z-\al) H^p+\BC$, and hence not in $s H^p+\cP_{\deg(s)-1}$, by Lemma \ref{L:1}. Now set $f=q\wtil{f}\in qH^p$. We have $q(z)=(z-\al)q_1(z) +c_1$ for a $q_1\in\cP$ and $c_1=q(\al)\neq 0$. Assume $f\in (z-\al) H^p+\BC$, say $f(z)= (z-\al)g(z)+c$ for a $g\in H^p$ and $c\in\BC$. Then
\[
((z-\al)q_1(z) +c_1) \wtil{f}(z)=q(z)\wtil{f}(z)=f(z)=(z-\al)g(z)+c.
\]
Hence $\wtil{f}(z)= (z-\al)(g(z)-q_1(z)\wtil{f}(z))/c_1 + c/c_1$, $z\in\BD$, which shows $\wtil{f}\in (z-\al)H^p+\BC$, in contradiction with our assumption. Hence $f\not\in (z-\al)H^p+\BC$. This implies, once more by Lemma \ref{L:1}, that there exists a $f\in qH^p$ which is not in $s H^p+\cP_{\deg(s)-1}$.
\end{proof}

The following lemma will be useful in the sequel.

\begin{lemma}\label{L:invar}
Let $q,s_+\in\cP$, $q,s_+\not\equiv0$ be co-prime with $s_+$ having roots only outside $\BT$. Then $s_+^{-1} (qH^p +\cP_{\deg(q)-1})=qH^p +\cP_{\deg(q)-1}$.
\end{lemma}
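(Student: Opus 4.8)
The claim is that multiplication by $s_+^{-1}$ maps the subspace $qH^p + \cP_{\deg(q)-1}$ onto itself. Since $s_+$ has all roots outside $\BT$, both $s_+$ and $s_+^{-1}$ lie in $H^\infty$, so multiplication by $s_+^{\pm1}$ is a bounded invertible operator on $H^p$; in particular $s_+^{-1}(qH^p) = qH^p$ because $s_+ H^p = H^p$. Hence it suffices to understand what happens to the polynomial part $\cP_{\deg(q)-1}$ after dividing by $s_+$, modulo $qH^p$. My plan is to prove the two inclusions separately, and for each one reduce to showing that $s_+^{-1} r$, for $r \in \cP_{\deg(q)-1}$, already lies in $qH^p + \cP_{\deg(q)-1}$ — and symmetrically that $s_+^{-1} r' \in qH^p + \cP_{\deg(q)-1}$ for the reverse direction, which by $s_+ H^p = H^p$ amounts to the same kind of statement with the roles of $s_+$ and a Bézout partner swapped.

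The key algebraic input is that $q$ and $s_+$ are co-prime, so by Bézout there exist $a,b \in \cP$ with $a q + b s_+ = 1$. Given $r \in \cP_{\deg(q)-1}$, write $r = aqr + bs_+ r$, hence
\[
\frac{r}{s_+} = \frac{aqr}{s_+} + br.
\]
Now $1/s_+ \in H^\infty$ and $aqr/s_+ = q \cdot (ar/s_+) \in qH^p$, while $br \in \cP$. This shows $s_+^{-1} r \in qH^p + \cP$, but I still need the polynomial remainder to have degree at most $\deg(q)-1$: for that I apply Euclidean division of $br$ by $q$, writing $br = cq + r_0$ with $c \in \cP$ and $\deg(r_0) < \deg(q)$, and absorb $cq \in qH^p$. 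This gives $s_+^{-1} r \in qH^p + \cP_{\deg(q)-1}$, and combined with $s_+^{-1}(qH^p) = qH^p$ we get the inclusion $s_+^{-1}(qH^p + \cP_{\deg(q)-1}) \subseteq qH^p + \cP_{\deg(q)-1}$.

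For the reverse inclusion I would apply the inclusion just proved to the polynomial $s_+$: since $s_+$ and $q$ are co-prime and $s_+$ is (of course) co-prime with itself in the relevant sense, the same Bézout argument applied with $s_+$ and $q$ shows $s_+(qH^p + \cP_{\deg(q)-1}) \subseteq qH^p + \cP_{\deg(q)-1}$ — concretely, for $r \in \cP_{\deg(q)-1}$ one has $s_+ r \in \cP$, and Euclidean division of $s_+ r$ by $q$ puts it in $qH^p + \cP_{\deg(q)-1}$, while $s_+(qH^p) = qH^p$. Applying $s_+^{-1}$ to $qH^p + \cP_{\deg(q)-1} \supseteq s_+(qH^p + \cP_{\deg(q)-1})$ yields $s_+^{-1}(qH^p + \cP_{\deg(q)-1}) \supseteq qH^p + \cP_{\deg(q)-1}$, completing the proof. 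I do not expect a serious obstacle here; the only point requiring a little care is keeping track of the degree of the polynomial remainder so that it stays within $\cP_{\deg(q)-1}$, which the Euclidean division step handles cleanly, and noting that co-primality of $q$ and $s_+$ is exactly what makes the Bézout identity available.
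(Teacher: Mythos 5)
Your proof is correct and follows essentially the same route as the paper: the easy inclusion comes from $s_+\cP_{\deg(q)-1}\subset\cP\subset qH^p+\cP_{\deg(q)-1}$ together with $s_+^{-1}H^p=H^p$, and the harder inclusion uses the B\'ezout identity $aq+bs_+=1$ to write $s_+^{-1}r=q\,(ar s_+^{-1})+br$ with the polynomial part reduced modulo $q$. The only cosmetic difference is that you perform the Euclidean division of $br$ by $q$ explicitly, where the paper simply invokes $\cP\subset qH^p+\cP_{\deg(q)-1}$; also note that your reverse inclusion needs no co-primality at all, only Euclidean division.
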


\begin{proof}[\bf Proof]
Set $\cR:=s_+^{-1} (qH^p +\cP_{\deg(q)-1})$.
Since $s_+$ has only roots outside $\BT$, we have $s_+^{-1}\in H^\infty$ and $s_+^{-1} H^p=H^p$. Thus
\[
\cR=s_+^{-1}(qH^p +\cP_{\deg(q)-1})=qH^p +s_+^{-1}\cP_{\deg(q)-1}.
\]
This implies $q H^p\subset\cR$. Next we show $\cP_{\deg(q)-1}\subset\cR$. Let $r\in \cP_{\deg(q)-1}$. Since $\cP\subset qH^p +\cP_{\deg(q)-1}$, we have $r s_+\in qH^p +\cP_{\deg(q)-1}$ and thus $r= s_+^{-1}(r s_+)\in\cR$. Hence $qH^p +\cP_{\deg(q)-1}\subset \cR$.

It remains to prove  $\cR \subset qH^p +\cP_{\deg(q)-1}$. Let $g=s_+^{-1}(qh+r)\in\cR$ with $h\in H^p$ and $r\in\cP_{\deg(q)-1}$. Since $q$ and $s_+$ have no common roots, there exist polynomials $a,b\in\cP$ with $qa+s_+ b\equiv 1$ and $\deg(a)<\deg(s_+)$, $\deg(b)<\deg(q)$. Since $rb\in\cP\subset q H^p+\cP_{\deg(q)-1}$, we have
\[
s_+^{-1}r= s_+^{-1}r(qa+s_+b)=q s_+^{-1}ra +rb\in q H^p+\cP_{\deg(q)-1}.
\]
Also $q s_+^{-1}h\in q H^p$, so we have $g\in qH^p+\cP_{\deg(q)-1}$. This shows that $\cR \subset qH^p +\cP_{\deg(q)-1}$ and completes the proof.
\end{proof}

\begin{remark}
For what other Banach spaces $X$ of analytic functions on $\BD$ do the above results hold? Note that the following properties of $X=H^p$ are used:
\begin{enumerate}
\item[(1)] $\cP\subset\cW_+\subset X$, and $\cP$ is dense in $X$;

\item[(2)] $\cW_+ X \subset X$;

\item[(3)] If $g=\sum_{n=0}^\infty g_nz^n \in X$ then $g_n \rightarrow 0$;

\item[(4)] If $g\in X$ and $\alpha\in\mathbb{T}$, then $g(z/\alpha)\in X$ as well;

\item[(5)] If $s\in\cP$ has no roots on $\mathbb{T}$, then $sX$ is closed in $X$.

\end{enumerate}
To see item 3 for $X=H^p$: note that by H\"older's inequality $H^p\subset H^1$, and for $p=1$ this follows from the Riemann-Lebesgue Lemma (\cite[Theorem I.2.8]{K68}), actually a sharper statement can be made in that case by a theorem of Hardy, see
\cite[Theorem III 3.16]{K68}, which states that if $f\in H^1$ then $\sum \vert f_n\vert n^{-1} <\infty$.

Other than $X=H^p$, $1< p< \infty$, the spaces of analytic functions $A^p$ on $\BD$ with Taylor coefficients $p$-summable, c.f., \cite{L14} and reference ([1-5]) given there, also have these properties. For a function $f\in A^p$ the norm $\|f\|_{A^p}$ is defined as the $l^p$-norm of the
sequence $(\widehat{f})_k$ of Taylor coefficients of $f$. Properties (1), (3) and (4) above are straightforward, property (2) is the fact that a function in the Wiener algebra is an $l^p$ multiplier
(see e.g., \cite{L14}). It remains to prove property (5).

Let $s$ be a polynomial with no roots on $\BT$, and let $(f_n)$ be a sequence of functions in $A^p$ such that the sequence $(sf_n)$ converges to $g$ in $A^p$. We have to show the existence of an $f\in A^p$ such that
$g=sf$. Note that $f_n$ and $g$ are analytic functions, and convergence of $(sf_n)$ to $g$ in $A^p$
means that $\|\widehat{sf_n} -\widehat{g}\|_{l^p} \to 0$. Consider the Toeplitz operator $T_s:l^p\to l^p$. Then $\widehat{sf_n}=T_s\widehat{f_n}$. So $\|T_s\widehat{f_n} -\widehat{g}\|_{l^p} \to 0$.
Since $s$ has no roots on $\BT$ the Toeplitz operator $T_s$ is Fredholm and has closed range, and since $s$ is a polynomial $T_s$ is injective. Thus there is a unique $\widehat{f}\in l^p$ such that $T_s\widehat{f}=\widehat{g}$.  Now define (at least formally)
the function $f(z)=\sum_{n=0}^\infty (\widehat{f})_kz^k$. Then $\widehat{sf}=\widehat{g}$, so
at least formally $s(z)f(z)=g(z)$. It remains to show that $f$ is analytic on $\BD$. To see this,
consider $z=r$ with $0<r<1$. Then by H\"older's inequality
$$
\sum_{k=0}^\infty |(\widehat{f})_k| r^k \leq \|\widehat{f}\|_{l^p} \left(\sum_{k=0}^\infty
r^{kq}\right)^{1/q}= \|\widehat{f}\|_{l^p}\left( \frac{1}{1-r^q}\right)^{1/q},
$$
showing that the series $f(z)=\sum_{n=0}^\infty (\widehat{f})_kz^k$ is absolutely convergent
on $\BD$. Since $f(z)=\frac{g(z)}{s(z)}$ is the quotient of an analytic function and a polynomial
it can only have finitely many poles on $\BD$, and since the series for $f(z)$ converges for every
$z\in \BD$ it follows that $f$ is analytic in $\BD$.
\end{remark}

\section{Fredholm properties of $T_\omega$ for $\om\in \Rat(\BT)$}\label{S:Fredholm1}

In this section we prove Theorem \ref{T:Mainthm1a}. We start with the formula for $\kernel(T_\om)$.

\begin{lemma}\label{L:kernel2}
Let $\omega \in\Rat(\BT)$, say $\omega =s/q$ with $s,q\in\cP$ co-prime. Write $s = s_-s_0s_+$ with the roots of $s_-$, $s_0$, $s_+$  inside, on, or outside $\mathbb{T}$, respectively. Then
\begin{equation}\label{KerForm}
\kernel (T_\omega) = \left\{\frac{\hat{r}}{s_+} \mid \deg(\hat{r}) < \deg(q) - (\deg(s_-) + \deg(s_0)) \right\}.
\end{equation}
\end{lemma}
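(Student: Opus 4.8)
The plan is to characterize $g \in \kernel(T_\om)$ directly from the domain formula \eqref{Rat0Tdom} in Lemma \ref{L:domain}. Write $\om = s/q$ with $s = s_- s_0 s_+$ as specified. If $g \in H^p$ with $T_\om g = 0$, then by Lemma \ref{L:domain} we need $\om g = r/q$ for some $r \in \cP_{\deg(q)-1}$, i.e. $s g = r$ with $\deg(r) < \deg(q)$. So $g = r/s$ must lie in $H^p$. Conversely any $g = r/s \in H^p$ with $\deg(r) < \deg(q)$ satisfies $s g = r \in \cP_{\deg(q)-1}$, hence $\om g = r/q$ with $h = 0$, so $g \in \kernel(T_\om)$. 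Thus $\kernel(T_\om) = \{ r/s \in H^p \mid r \in \cP,\ \deg(r) < \deg(q)\}$, and everything reduces to deciding when $r/s$ is in $H^p$.

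The next step is to analyze membership of $r/s$ in $H^p$. Since $s = s_- s_0 s_+$, the factor $s_+$ has all roots outside $\overline{\BD}$, so $1/s_+ \in H^\infty$ and contributes no obstruction. The factors $s_-$ (roots in $\BD$) and $s_0$ (roots on $\BT$) do: for $r/s$ to be in $H^p$, the numerator $r$ must be divisible by $s_-$ (to cancel the poles inside $\BD$) and by $s_0$ (because a genuine pole on $\BT$ would put $r/s \notin L^p$ by Lemma \ref{L:h2_rat}, hence not in $H^p$). I would make this precise by a partial fraction / Euclidean division argument: write $r = s_- s_0 \hat r + \text{(lower-order remainder)}$ after peeling off the part of $r$ forced to vanish to the right orders at the roots of $s_-$ and $s_0$; alternatively, argue that $s_- s_0$ and $s_+$ are co-prime, so $r/s \in H^p$ together with the pole structure forces $s_- s_0 \mid r$, say $r = s_- s_0 \hat r$, and then $r/s = \hat r / s_+ \in H^\infty \cdot \cP \subset H^p$ automatically. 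The degree bound transfers as $\deg(\hat r) = \deg(r) - \deg(s_-) - \deg(s_0) < \deg(q) - \deg(s_-) - \deg(s_0)$, giving exactly \eqref{KerForm}.

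Conversely, for any $\hat r$ with $\deg(\hat r) < \deg(q) - \deg(s_-) - \deg(s_0)$, set $r = s_- s_0 \hat r$; then $\deg(r) < \deg(q)$ and $r/s = \hat r/s_+ \in H^p$, so $\hat r/s_+ \in \kernel(T_\om)$ by the first paragraph. This establishes both inclusions. (One should also note the convention that the set is $\{0\}$ when $\deg(q) \le \deg(s_-) + \deg(s_0)$, which is consistent since then no nonzero $\hat r$ of the required degree exists.)

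The main obstacle I anticipate is the "only if" direction of the divisibility claim: showing rigorously that $r/s \in H^p$ forces $s_0 \mid r$. Divisibility by $s_-$ is clean because one can evaluate $r$ and its derivatives at points of $\BD$ (as in the proof of Proposition \ref{P:directsum}). For the roots on $\BT$ one cannot evaluate inside $H^p$, so the argument must go through Lemma \ref{L:h2_rat}: if $s_0 \nmid r$, then after canceling common factors $r/s$ reduces to a rational function with a genuine pole on $\BT$, which therefore fails to be in $L^p \supset H^p$ — a contradiction. This is the one place where a little care is needed to make sure the reduced fraction really does retain a pole on $\BT$ (i.e. the cancellation with $s_-$ and $s_+$ does not remove it), but co-primeness of $s_-, s_0, s_+$ handles it.
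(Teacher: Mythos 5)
Your proposal is correct and follows essentially the same route as the paper: both reduce $T_\om g=0$ via Lemma \ref{L:domain} to $sg=\hat r$ with $\deg(\hat r)<\deg(q)$, then force $s_-s_0\mid \hat r$ (evaluation inside $\BD$ for $s_-$, Lemma \ref{L:h2_rat} for $s_0$, and $1/s_+\in H^\infty$), leaving $g=\hat r/s_+$ with the stated degree bound. The only cosmetic difference is that the paper first multiplies by $s_+$ to form $\wtil r=s_+g\in H^p$ before dividing out $s_-s_0$, whereas you divide $r$ by $s_-s_0$ directly; the substance is identical.
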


\begin{proof}[\bf Proof]
If $g =\hat{r}/s_+$ where $\deg(\hat{r}) < \deg(q) - (\deg(s_-) + \deg(s_0))$, then $sg = s_-s_0\hat{r}$ which is a polynomial with $\deg(s_-s_0\hat{r}) < \deg(q)$. Thus $\om g= s_-s_0\hat{r}/q\in\Rat_0(\BT)$ which implies that $g\in\Dom(T_\om)$ and $T_\om g =0$. Hence $g\in \ker (T_\omega)$. This proves the inclusion $\supset$ in the identity \eqref{KerForm}.

Conversely suppose $g\in \ker T_\omega$. Then $T_\omega g = 0$, i.e., by Lemma \ref{L:domain} we have $\omega g = \hat{r}/q$ or equivalently $sg = \hat{r}$ for some $\hat{r}\in\cP_{\deg(q)-1}$. Hence $s_-s_0(s_+g) =sg= \hat{r}$. Thus $g=\wtil{r}/s_+$ with $\wtil{r}:=s_+g\in H^p$. Note that $s_-s_0 \widetilde{r} = \hat{r}$, so that $\widetilde{r}= \hat{r}/(s_-s_0)$. Since $\wtil{r}\in H^p$ and $s_-s_0$ only has roots in $\overline{\BD}$, the identity $\widetilde{r}= \hat{r}/(s_-s_0)$ can only hold in case $s_-s_0$ divides $\hat{r}$, i.e., $\hat{r}=s_-s_0 r_1$ for some $r_1\in\cP$. Then $\widetilde{r}=r_1\in\cP$ and we have
\[
\deg(\wtil{r})=\deg(s_+g) = \deg(\hat{r}) - \deg(s_-s_0) < \deg(q) - (\deg (s_-)+\deg(s_0)).
\]
Hence $g$ is included in the right hand side of \eqref{KerForm}, and we have also proved the inclusion $\subset$. Thus \eqref{KerForm} holds.
\end{proof}

We immediately obtain the following corollaries.

\begin{corollary}\label{C:kerdim}
Let $\omega\in\Rat(\BT)$. Then
\begin{align*}
&\dim (\kernel (T_\omega)) = \\
& \qquad  = \max \left\{0, \sharp\left\{
\begin{array}{l}\!\!\!
\textrm{poles of }\omega, \textrm{ multi.}\!\!\!\! \\
\!\!\!\textrm{taken into account}\!\!\!\!
\end{array}\right\}
- \sharp\left\{
\begin{array}{l}\!\!\!
\textrm{zeroes of } \omega \textrm{ in }\overline{\mathbb{D}},\!\textrm{ multi.}\!\!\!\!\\\
\!\!\!\textrm{ taken into account}\!\!\!\!
\end{array}\right\}
\right \}.
\end{align*}
In particular, $T_\om$ is injective if and only if the number of zeroes of $\om$ inside $\overline\BD$ is greater than or equal to the number of poles of $\om$ (all on $\BT$), in both cases with multiplicity taken into account.
\end{corollary}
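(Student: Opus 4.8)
The plan is to deduce Corollary \ref{C:kerdim} directly from the explicit description of $\kernel(T_\om)$ in Lemma \ref{L:kernel2}. The kernel is $\{\hat r/s_+ \mid \deg(\hat r) < \deg(q) - (\deg(s_-)+\deg(s_0))\}$, and the map $\hat r \mapsto \hat r/s_+$ is clearly linear and injective, so the dimension of the kernel equals the dimension of the space of polynomials $\hat r$ with $\deg(\hat r) < \deg(q) - (\deg(s_-)+\deg(s_0))$. Writing $N := \deg(q) - \deg(s_-) - \deg(s_0)$, this polynomial space is $\cP_{N-1}$ when $N \geq 1$, which has dimension $N$, and it is $\{0\}$ when $N \leq 0$. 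Hence $\dim(\kernel(T_\om)) = \max\{0, N\}$.

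Next I would translate $N$ into the pole/zero counting language of the statement. The number of poles of $\om = s/q$, with multiplicity, is $\deg(q)$ (since $s,q$ are co-prime, no cancellation occurs, and $\om \in \Rat(\BT)$ means all poles are genuinely at the roots of $q$). The zeroes of $\om$ are the roots of $s$; those lying in $\overline{\BD}$, with multiplicity, are exactly the roots of $s_-$ together with the roots of $s_0$ (roots inside $\BT$ and on $\BT$), so that number is $\deg(s_-) + \deg(s_0)$. Therefore
\[
N = \sharp\{\textrm{poles of }\om\textrm{, multi.}\} - \sharp\{\textrm{zeroes of }\om\textrm{ in }\overline{\BD}\textrm{, multi.}\},
\]
and substituting into $\dim(\kernel(T_\om)) = \max\{0,N\}$ gives precisely the asserted formula.

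Finally, for the injectivity statement: $T_\om$ is injective if and only if $\dim(\kernel(T_\om)) = 0$, which by the formula happens exactly when $\max\{0,N\} = 0$, i.e., when $N \leq 0$, i.e., when $\deg(s_-)+\deg(s_0) \geq \deg(q)$ — that is, the number of zeroes of $\om$ in $\overline{\BD}$ (with multiplicity) is greater than or equal to the number of poles of $\om$ (with multiplicity, all on $\BT$). This is a purely routine corollary; there is no real obstacle. The only point requiring a word of care is the bookkeeping that $\deg(s_-) + \deg(s_0)$ counts precisely the zeroes of $\om$ in the closed disc with multiplicity and that $\deg(q)$ counts all the poles with multiplicity, both of which follow immediately from the co-primality of $s$ and $q$ and the factorization $s = s_- s_0 s_+$ chosen in Lemma \ref{L:kernel2}.
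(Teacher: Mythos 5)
Your proof is correct and is exactly the argument the paper intends: the corollary is stated there as an immediate consequence of Lemma \ref{L:kernel2}, and your dimension count of the polynomial space $\{\hat r : \deg(\hat r)<\deg(q)-\deg(s_-)-\deg(s_0)\}$ together with the identification of $\deg(q)$ and $\deg(s_-)+\deg(s_0)$ as the pole and zero counts is precisely the intended bookkeeping.
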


\begin{corollary}
Let $\omega\in\Rat(\BT)$ with all zeroes inside $\overline{\BD}$. Then
\[
\kernel (T_\om)=\{r\mid \deg(r)<\deg(q)-\deg(s)\}\subset\cP.
\]
\end{corollary}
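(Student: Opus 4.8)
The plan is to obtain this as an immediate specialization of Lemma \ref{L:kernel2}. First I would unpack the hypothesis: writing the numerator of $\om=s/q$ as $s=s_-s_0s_+$ with the roots of $s_-$, $s_0$, $s_+$ inside, on, and outside $\BT$ respectively, the assumption that \emph{all} zeroes of $\om$ lie in $\overline{\BD}$ says precisely that $s$ has no roots outside $\BT$, i.e. $s_+$ is a nonzero constant, $\deg(s_+)=0$, and hence $\deg(s_-)+\deg(s_0)=\deg(s_-s_0)=\deg(s)$.

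Next I would substitute this into the formula \eqref{KerForm} of Lemma \ref{L:kernel2}. Since $s_+$ is a nonzero constant, the map $\hat r\mapsto \hat r/s_+$ is just multiplication by a scalar, so as $\hat r$ runs over all polynomials with $\deg(\hat r)<\deg(q)-(\deg(s_-)+\deg(s_0))=\deg(q)-\deg(s)$, the quotient $\hat r/s_+$ runs over exactly the polynomials of degree $<\deg(q)-\deg(s)$. Therefore
\[
\kernel(T_\om)=\left\{\tfrac{\hat r}{s_+}\;\middle|\;\deg(\hat r)<\deg(q)-\deg(s)\right\}=\{r\in\cP\mid \deg(r)<\deg(q)-\deg(s)\},
\]
with the usual convention that the right-hand side is $\{0\}$ when $\deg(q)\le\deg(s)$; in particular $\kernel(T_\om)\subset\cP$, as claimed.

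I do not anticipate any real obstacle here: all the content sits in Lemma \ref{L:kernel2}, and the corollary is simply the case $s_+\in\BC\setminus\{0\}$, where the somewhat awkward denominator $s_+$ disappears. The only point worth a word is the degenerate case $\deg(q)\le\deg(s)$, which is handled by the stated convention (and is consistent with Corollary \ref{C:kerdim}, whose $\max\{0,\cdot\}$ then returns $0$).
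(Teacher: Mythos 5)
Your proposal is correct and matches the paper's intent exactly: the corollary is stated as an immediate consequence of Lemma \ref{L:kernel2}, obtained precisely by observing that the hypothesis forces $s_+$ to be a nonzero constant, so that $\deg(s_-)+\deg(s_0)=\deg(s)$ and the quotient $\hat r/s_+$ ranges over the polynomials of degree less than $\deg(q)-\deg(s)$. Your remark on the degenerate case $\deg(q)\le\deg(s)$ is a sensible addition but not a point of divergence.
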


\begin{corollary}\label{C:Rat0Inject}
Let $\omega\in\Rat_0(\BT)$, say $\om=s/q$ with $s,q\in\cP$ co-prime. Then  $\cP_{\deg(q)-\deg(s)-1}\subset \kernel (T_\om)$ and thus $T_\om$ is not injective.
\end{corollary}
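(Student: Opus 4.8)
The statement is an immediate consequence of the kernel formula in Lemma \ref{L:kernel2}, but it also admits a direct one-line verification that I would prefer to record, since it does not require the factorization $s=s_-s_0s_+$. The plan is as follows. First observe that, since $\om=s/q\in\Rat_0(\BT)$ is strictly proper, we have $\deg(s)<\deg(q)$; in particular $\deg(q)-\deg(s)-1\geq 0$, so $\cP_{\deg(q)-\deg(s)-1}$ is a nonzero subspace of $H^p$ (it contains at least the constants).

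Next, take an arbitrary $r\in\cP_{\deg(q)-\deg(s)-1}$, so that $\deg(r)<\deg(q)-\deg(s)$. Then
\[
\om r=\frac{sr}{q},\qquad \deg(sr)=\deg(s)+\deg(r)<\deg(q),
\]
so $sr/q$ is a strictly proper rational function whose only poles are the roots of $q$, all on $\BT$; that is, $\om r = 0 + sr/q$ with $0\in L^p$ and $sr/q\in\Rat_0(\BT)$. By the definition \eqref{Toeplitz} of $\Dom(T_\om)$ this shows $r\in\Dom(T_\om)$ and $T_\om r=\BP 0=0$, i.e.\ $r\in\kernel(T_\om)$. Hence $\cP_{\deg(q)-\deg(s)-1}\subset\kernel(T_\om)$, and since this subspace is nonzero, $T_\om$ is not injective.

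There is no real obstacle here; the only point requiring a moment's care is that $\om\in\Rat_0(\BT)$ genuinely forces strict properness ($\deg(s)<\deg(q)$), which is exactly what guarantees both that $sr/q$ lands in $\Rat_0(\BT)$ and that the polynomial space $\cP_{\deg(q)-\deg(s)-1}$ is nontrivial. Alternatively, one can simply cite Corollary \ref{C:kerdim}: for $\om\in\Rat_0(\BT)$ the number of poles (all on $\BT$, counted with multiplicity) is $\deg(q)$, while the number of zeroes in $\overline{\BD}$ is at most $\deg(s)<\deg(q)$, so $\dim\kernel(T_\om)\geq\deg(q)-\deg(s)>0$.
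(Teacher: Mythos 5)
Your proof is correct and follows essentially the same route as the paper: the corollary is stated there as an immediate consequence of the kernel formula in Lemma \ref{L:kernel2}, and your direct computation ($\om r = sr/q\in\Rat_0(\BT)$ since $\deg(sr)<\deg(q)$ and $q$ has all roots on $\BT$) is precisely the ``$\supset$'' half of that lemma's proof specialized to polynomials, i.e.\ to $\hat r = s_+ r$. Your explicit remark that strict properness forces $\deg(s)<\deg(q)$, so that $\cP_{\deg(q)-\deg(s)-1}$ is a nonzero subspace, is the right point to flag for the non-injectivity conclusion.
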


Next we prove the inclusions for $\Dom(T_\om)$ and $\Ran(T_{\om})$ in \eqref{DRK}.

\begin{proposition}\label{P:DomRanIncl}
Let $\omega\in \Rat(\mathbb{T})$, say $\om=s/q$ with $s,q\in\cP$ co-prime. Then
\begin{equation}\label{DomRanIncl}
\begin{aligned}
qH^p+\cP_{\deg(q)-1}&\subset \Dom(T_\om);\\
T_\om (qH^p+\cP_{\deg(q)-1})&=s H^p+\wtil\cP\subset \Ran(T_\om),
\end{aligned}
\end{equation}
where $\wtil\cP$ is the subspace of $\cP$ given by
\begin{equation}\label{tilP}
\wtil\cP = \{ r\in\cP \mid r q = r_1 s + r_2 \mbox{ for } r_1,r_2\in\mathcal{P}_{\deg(q)-1}\}\subset \cP_{\deg(s)-1}.
\end{equation}
\end{proposition}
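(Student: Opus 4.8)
The plan is to prove the three assertions in turn, relying on the alternative domain formula from Lemma \ref{L:domain}. First I would establish $qH^p+\cP_{\deg(q)-1}\subset\Dom(T_\om)$: given $g=qh+r$ with $h\in H^p$ and $r\in\cP_{\deg(q)-1}$, compute $\om g=(s/q)(qh+r)=sh+sr/q$. Now $sh\in H^p$ since $s\in\cP\subset H^\infty$, and $sr/q$ is rational; performing Euclidean division of $sr$ by $q$ gives $sr=uq+r_0$ with $u,r_0\in\cP$ and $\deg(r_0)<\deg(q)$, so $sr/q=u+r_0/q$ with $u\in\cP\subset H^p$ and $r_0/q\in\Rat_0(\BT)$. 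Hence $\om g=(sh+u)+r_0/q$ with $sh+u\in H^p$ and $r_0/q\in\Rat_0(\BT)$, so $g\in\Dom(T_\om)$ and, by the uniqueness in Lemma \ref{L:domain}, $T_\om g=sh+u$. This also records that $T_\om$ maps $qH^p+\cP_{\deg(q)-1}$ into $sH^p+\cP$.

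Next I would pin down $T_\om(qH^p+\cP_{\deg(q)-1})$ exactly as $sH^p+\wtil\cP$. For the inclusion $\subset$: from the computation above, $T_\om(qh+r)=sh+u$ where $sr=uq+r_0$, i.e.\ $uq=sr-r_0$ with $r,r_0\in\cP_{\deg(q)-1}$; this says precisely $u\in\wtil\cP$ (with $r_1=r$, $r_2=-r_0$ after a sign adjustment, reading the defining relation of $\wtil\cP$), so $sh+u\in sH^p+\wtil\cP$. For $\supset$: given $sh+u$ with $h\in H^p$ and $u\in\wtil\cP$, the relation $uq=r_1s+r_2$ with $r_1,r_2\in\cP_{\deg(q)-1}$ can be read backwards — set $g=qh+r_1$; then $\om g=sh+sr_1/q$ and $sr_1=uq-r_2$, so $sr_1/q=u-r_2/q$, giving $\om g=(sh+u)+(-r_2/q)$ with $-r_2/q\in\Rat_0(\BT)$, whence $T_\om g=sh+u$. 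Since $g\in qH^p+\cP_{\deg(q)-1}$, this gives the reverse inclusion. The inclusion $sH^p+\wtil\cP\subset\Ran(T_\om)$ is then immediate, and $\wtil\cP\subset\cP_{\deg(s)-1}$ follows by a degree count: in $uq=r_1s+r_2$ the left side has degree $\deg(u)+\deg(q)$ while $\deg(r_1 s+r_2)\le\max\{\deg(s)-1+\ldots\}$ — more carefully, $\deg(r_1)<\deg(q)$ forces $\deg(r_1 s)<\deg(q)+\deg(s)$, and $\deg(r_2)<\deg(q)\le\deg(q)+\deg(s)$, so $\deg(u)+\deg(q)<\deg(q)+\deg(s)$, i.e.\ $\deg(u)<\deg(s)$.

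The step I expect to require the most care is the bookkeeping identifying $u\in\wtil\cP$ in both directions: one must be attentive to signs and to which polynomial plays the role of $r_1$ versus $r_2$ in the defining relation $rq=r_1 s+r_2$, and to the fact that the $r_2$ appearing there is the negative of the Euclidean remainder $r_0$. None of this is deep, but it is the place where an off-by-a-sign error would derail the argument. A minor subtlety worth flagging is that throughout we use the uniqueness of the decomposition $\om g=f+\rho$ (established in Proposition \ref{P:welldefclosed} and refined in Lemma \ref{L:domain}) to ensure that the $h\in H^p$ produced in each computation genuinely equals $T_\om g$ and not merely some element whose projection agrees with it; since in each case we have already exhibited $\om g$ in the form $H^p$-function plus $\Rat_0(\BT)$-function, this is automatic.
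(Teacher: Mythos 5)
Your proposal is correct and follows essentially the same route as the paper: Euclidean division of $sr_1$ by $q$ to get membership in $\Dom(T_\om)$ and to identify the quotient as an element of $\wtil\cP$ (modulo the sign on the remainder), and then reading the defining relation $uq=r_1s+r_2$ backwards via $g=qh+r_1$ for the reverse inclusion. The degree argument for $\wtil\cP\subset\cP_{\deg(s)-1}$ also matches the paper's, so there is nothing to add.
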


\begin{proof}[\bf Proof]
We start with the first inclusion of \eqref{DomRanIncl}. Let $g \in qH^p + \mathcal{P}_{\deg(q)-1}$, i.e., $g = qh + r_1$ where $h\in H^p$ and $\deg(r_1) < \deg(q)$. Write $sr_1=rq+r_2$ with $\deg(r_2)<\deg(q)$. Then
\begin{align*}
 \deg(r) +\deg(q)&=\deg(rq)=\deg (rq+r_2)=\deg(sr_1)\\
 &=\deg(s)+\deg(r_1)<\deg(s)+\deg(q).
\end{align*}
Hence $\deg(r)<\deg (s)$, and we have
\[
\om g= sh +\frac{sr_1}{q} = (sh+ r) +\frac{r_2}{q}\in H^p + \Rat_0(\BT).
\]
Hence $g\in \Dom(T_\om)$ and $T_\om g = sh+ r\subset s H^p +\cP_{\deg(s)-1}$. This proves the first inclusion in \eqref{DomRanIncl}.

Further, observe that $sr_1=rq+r_2$ implies $rq=sr_1-r_2$ and we have $\deg(r_1)<\deg(q)$ and $\deg(r_2)<\deg(q)$, so that $r\in\wtil{\cP}$. This gives the inclusion $T_\om (qH^p+\cP_{\deg(q)-1})\subset s H^p+\wtil\cP$.

To complete the proof of \eqref{DomRanIncl} it remains to prove the reverse inclusion. Let $f\in s H^p+\wtil\cP$, say $f=s h+r$ with $h\in H^p$, $r\in\wtil\cP$. Hence $qr=r_1s + r_2$ with $r_1,r_2\in\cP_{\deg(q)-1}$. We seek $g\in q H^p+\cP_{\deg(q)-1}$ and $\wtil{r}\in\cP_{\deg(q)-1}$ such that $\om g=f+\wtil{r}/q$, or equivalently
\[
sg=qf+\wtil{r}= qsh+qr+\wtil{r}=sqh+sr_1+r_2 +\wtil{r}=s(qh+r_1)+ r_2 +\wtil{r}.
\]
Since $\deg(r_2)<\deg(q)$, this is clearly satisfied for $g=qh+r_1$ and $\wtil{r}=-r_2$. In particular, $T_\om (qh+r_1)=sh+r$. Hence \eqref{DomRanIncl} holds.
\end{proof}

In the following lemma we determine a complement of $\wtil\cP$ in $\cP_{\deg(s)-1}$.

\begin{lemma}\label{L:tilP}
Let $\omega\in \Rat(\mathbb{T})$, say $\om=s/q$ with $s,q\in\cP$ co-prime. Define $\wtil\cP$ by \eqref{tilP} and set $\wtil\cQ=\cP_{\deg(s)-\deg(q)-1}$ if $\deg(s)>\deg(q)$ and $\wtil\cQ=\{0\}$ otherwise. Then $\cP_{\deg(s)-1} =\wtil\cP \dot{+} \wtil\cQ$, with $\dot{+}$ indicating a direct sum. In particular, $\cP_{\deg(s)-1} =\wtil\cP$ if and only if $\deg(s)\leq\deg(q)$.
\end{lemma}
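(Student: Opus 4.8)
The plan is to identify $\wtil\cP$ as the pre-image of the standard Euclidean complement $\cP_{\deg(q)-1}$ under an invertible ``division remainder'' operator on $\cP_{\deg(s)-1}$, and then to transport the Euclidean decomposition of $\cP_{\deg(s)-1}$ relative to $q$ through that operator. Throughout write $m=\deg(q)$ and $n=\deg(s)$, and recall from \eqref{tilP} that $\wtil\cP\subset\cP_{n-1}$ (this is also immediate: if $0\neq r\in\wtil\cP$ with $rq=r_1s+r_2$, $r_1,r_2\in\cP_{m-1}$, then $\deg(r)+m=\deg(rq)\leq n+m-1$).

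First I would introduce the linear map $\beta\colon\cP_{n-1}\to\cP_{n-1}$ sending $r$ to the remainder $b$ in the Euclidean division $rq=as+b$ with $\deg(b)<n$. The key bookkeeping point is that the quotient automatically lies in $\cP_{m-1}$: indeed $\deg(a)=\deg(rq)-n\leq(n-1+m)-n=m-1$ when $\deg(rq)\geq n$, and $a=0$ otherwise. Hence the only genuine constraint in the definition \eqref{tilP} of $\wtil\cP$ is the one on the remainder, and comparing with the uniqueness of division by $s$ shows that, when $n\geq m$, one has $r\in\wtil\cP$ if and only if $b=\beta(r)\in\cP_{m-1}$; that is, $\wtil\cP=\beta^{-1}(\cP_{m-1})$. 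I would then note that $\beta$ is injective — $\beta(r)=0$ forces $s\mid rq$, hence $s\mid r$ by co-primality, which is impossible for $0\neq r\in\cP_{n-1}$ since $\deg(r)<n=\deg(s)$ — and therefore an automorphism of the finite-dimensional space $\cP_{n-1}$.

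Now split into cases. If $\deg(s)\leq\deg(q)$, then $\cP_{n-1}\subseteq\cP_{m-1}$, so for every $r\in\cP_{n-1}$ both $a$ and $b$ above lie in $\cP_{m-1}$; thus $\wtil\cP=\cP_{n-1}=\cP_{\deg(s)-1}$, and since $\wtil\cQ=\{0\}$ by definition the asserted decomposition is trivial. If $\deg(s)>\deg(q)$, then $\cP_{m-1}\subset\cP_{n-1}$, so $\beta(\wtil\cP)=\beta(\beta^{-1}(\cP_{m-1}))=\cP_{m-1}$; and for $r\in\wtil\cQ=\cP_{n-m-1}$ we have $\deg(rq)=\deg(r)+m\leq n-1<n$, so $\beta(r)=rq$ and hence $\beta(\wtil\cQ)=q\cP_{n-m-1}$. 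The Euclidean division by $q$ gives the direct sum $\cP_{n-1}=q\cP_{n-m-1}\dot{+}\cP_{m-1}=\beta(\wtil\cQ)\dot{+}\beta(\wtil\cP)$, and applying the linear isomorphism $\beta^{-1}$ yields $\cP_{n-1}=\wtil\cQ\dot{+}\wtil\cP$, as claimed. Finally, $\wtil\cQ=\{0\}$ precisely when $\deg(s)\leq\deg(q)$, whereas if $\deg(s)>\deg(q)$ then $\wtil\cQ\neq\{0\}$ and the direct sum forces $\wtil\cP\subsetneq\cP_{\deg(s)-1}$; this gives the ``in particular'' statement.

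I expect the only delicate point to be the degree bookkeeping that pins $\wtil\cP$ down as $\beta^{-1}(\cP_{\deg(q)-1})$ — in particular, verifying that the quotient in the division of $rq$ by $s$ always lands in $\cP_{\deg(q)-1}$ so that the remainder condition is the only active constraint, together with separating off the (easy) regime $\deg(s)\leq\deg(q)$ in which $\cP_{n-1}$ sits inside $\cP_{m-1}$ and the characterization via $\beta$ degenerates. Once $\beta$ is recognized as an automorphism carrying $\wtil\cQ$ and $\wtil\cP$ onto the two summands of the Euclidean decomposition of $\cP_{n-1}$ by $q$, everything else is formal.
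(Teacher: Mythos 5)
Your argument is correct, and it is built on the same two Euclidean divisions (of $rq$ by $s$, and of the resulting remainder by $q$) that the paper uses, but packaged differently. The paper proves the two halves of the direct-sum claim separately: it shows $\wtil\cP\cap\wtil\cQ=\{0\}$ by a degree count (for $0\neq r\in\wtil\cQ$ the division of $rq$ by $s$ has zero quotient and remainder $rq$ of degree $\geq\deg(q)$), and it shows $\wtil\cP+\wtil\cQ=\cP_{\deg(s)-1}$ by explicitly splitting the remainder $r_2$ of $rq=r_1s+r_2$ as $r_2=\wtil r_1 q+\wtil r_2$ and checking $r-\wtil r_1\in\wtil\cP$, $\wtil r_1\in\wtil\cQ$. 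You instead observe that the remainder map $\beta(r)=rq \bmod s$ is a linear automorphism of $\cP_{\deg(s)-1}$ (injectivity from co-primality), that the quotient is automatically in $\cP_{\deg(q)-1}$ so that $\wtil\cP=\beta^{-1}(\cP_{\deg(q)-1})$, and that $\beta$ carries $\wtil\cQ$ onto $q\cP_{\deg(s)-\deg(q)-1}$; the lemma then becomes the image under $\beta^{-1}$ of the standard decomposition $\cP_{\deg(s)-1}=q\cP_{\deg(s)-\deg(q)-1}\dot+\cP_{\deg(q)-1}$. What this buys you is that injectivity of the sum comes for free from the isomorphism rather than from a separate degree argument, and the identification $\wtil\cP=\beta^{-1}(\cP_{\deg(q)-1})$ is a clean structural description of $\wtil\cP$ that the paper leaves implicit. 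The two delicate points you flag — that the quotient always lands in $\cP_{\deg(q)-1}$, and that uniqueness of division by $s$ makes the remainder condition the only active constraint (valid once $\deg(s)\geq\deg(q)$, with the case $\deg(s)\leq\deg(q)$ treated separately exactly as the paper does) — are handled correctly.
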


\begin{proof}[\bf Proof]
For $\deg(s)\leq \deg(q)$ we have $\wtil\cQ=\{0\}$. Hence it is trivial that $\wtil\cP + \wtil\cQ$ is a direct sum. Also, in this case $\cP_{\deg(s)-1}\subset \cP_{\deg(q)-1}$ and consequently $s H^p+\cP_{\deg(q)-1}=s H^p+\cP_{\deg(s)-1}$, and this subspace of $H^p$ contains all polynomials. In particular, for any $r\in\cP_{\deg(s)-1}$ we have $q r\in s \cP_{\deg(q)-1} +\cP_{\deg(q)-1}$, which shows $r\in\wtil{\cP}$. Hence $\wtil{\cP}=\cP_{\deg(s)-1}$.

Next, assume $\deg(s)>\deg(q)$. Let $r\in\wtil\cQ$, i.e., $\deg(r)<\deg(s)-\deg(q)$. In that case $\deg(rq)<\deg(s)$ so that if we write $rq$ as $rq=r_1 s+r_2$ then $r_1\equiv 0$ and $r_2=rq$ with $\deg(rq)\geq \deg(q)$. Thus $rq$ is not in $s \cP_{\deg(q)-1}  + \cP_{\deg(q)-1}$ and, consequently, $r$ is not in $\wtil\cP$. Hence $\wtil\cP\cap\wtil\cQ=\{0\}$. It remains to show that $\wtil\cP + \wtil\cQ= \cP_{\deg(s)-1}$. Let $r\in\cP_{\deg(s)-1}$. Then we can write $rq$ as $rq=r_1 s +r_2$ with $\deg(r_1)<\deg(q)$ and $\deg(r_2)<\deg(s)$. Next write $r_2$ as $r_2=\wtil{r}_1 q+\wtil{r}_2$ with $\deg(\wtil{r}_2)<\deg(q)$. Since $\deg(r_2)<\deg(s)$, we have $\deg (\wtil{r}_1)<\deg(s)-\deg(q)$. Thus $\wtil{r}_1\in\wtil\cQ$. Moreover, we have
\[
rq=r_1 s +r_2= r_1 s +\wtil{r}_1 q+\wtil{r}_2= (r_1 s + \wtil{r}_2) + \wtil{r}_1 q,\mbox{ hence } (r-\wtil{r}_1)q=r_1 s + \wtil{r}_2.
\]
Thus $r-\wtil{r}_1\in\wtil\cP$, and we can write $r=(r-\wtil{r}_1)+\wtil{r}_1\in\wtil\cP+\wtil\cQ$.
\end{proof}

We now show that if $s$ has no roots on $\BT$, then the reverse inclusions in \eqref{DomRanIncl} also hold.

\begin{theorem}\label{T:DomRanIds}
Let $\omega\in \Rat(\mathbb{T})$, say $\om=s/q$ with $s,q\in\cP$ co-prime. Then $T_\om$ has closed range if and only if  $s$ has no roots on $\BT$, or equivalently, $s H^p+\wtil\cP$ is closed in $H^p$. In case $s$ has no roots on $\BT$, we have
\begin{equation}\label{DomRanId}
\Dom(T_\om)=qH^p+\cP_{\deg(q)-1}\ands
\Ran(T_\om)=s H^p+\wtil\cP.
\end{equation}
\end{theorem}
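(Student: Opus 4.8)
The plan is to prove Theorem~\ref{T:DomRanIds} by combining the one-sided inclusions already established in Proposition~\ref{P:DomRanIncl} with a closedness argument, and to handle the ``if and only if'' for the range by reducing to Theorem~\ref{T:main1} and Theorem~\ref{T:main2} applied to the numerator $s$. First I would treat the case where $s$ has no roots on $\BT$. Factor $s=s_-s_+$ with $s_-,s_+\in\cP$ having roots inside and outside $\BT$ respectively. By Theorem~\ref{T:main1}, $sH^p$ is closed in $H^p$, hence $sH^p+\wtil\cP$ is closed since $\wtil\cP\subset\cP_{\deg(s)-1}$ is finite dimensional (using \cite[Chapter 3, Proposition 4.3]{C90}, exactly as in the proof of Theorem~\ref{T:main1}). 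Then $T_\om$ has closed range because Proposition~\ref{P:DomRanIncl} gives $\Ran(T_\om)\supset T_\om(qH^p+\cP_{\deg(q)-1})=sH^p+\wtil\cP$, and I will show this inclusion is in fact an equality.

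The key step is the reverse inclusion $\Dom(T_\om)\subset qH^p+\cP_{\deg(q)-1}$ when $s$ has no roots on $\BT$. Take $g\in\Dom(T_\om)$, so by Lemma~\ref{L:domain} we have $sg=qh+r$ for some $h\in H^p$ and $r\in\cP_{\deg(q)-1}$. I want to solve this for $g$ in $qH^p+\cP_{\deg(q)-1}$. Since $s$ and $q$ are co-prime, pick polynomials $a,b$ with $sa+qb\equiv 1$. Multiplying $sg=qh+r$ shows $g$ is determined, and writing $g=s^{-1}(qh+r)$, I would use Lemma~\ref{L:invar} (applied with $s_+$ in place of $s$ after dividing out $s_-$) together with the observation that $s_-$ divides the polynomial piece: evaluating $sg=qh+r$ and its derivatives at the roots of $s_-$ (which lie in $\BD$, so $g$ and $h$ are analytic there) forces $s_-$ to divide $r$ in a compatible way, yielding $s_-g\in\text{(something analytic)}$. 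More precisely, from $s_-(s_+g)=qh+r$ and the fact that $s_+g=s_+ s^{-1}(qh+r)/1\ldots$ — here the cleanest route is: $s_+^{-1}H^p=H^p$ since $s_+^{-1}\in H^\infty$, so $g=s_+^{-1}(s_-^{-1}(qh+r))$, and it suffices to show $s_-^{-1}(qh+r)\in qH^p+\cP_{\deg(q)-1}$; then Lemma~\ref{L:invar} finishes it. For that, note $s_-$ and $q$ are co-prime, so one can split $s_-^{-1}(qh+r)$ via a Bezout identity $qc+s_-d\equiv1$ as in the proof of Lemma~\ref{L:invar}, reducing to showing $s_-^{-1}r\in qH^p+\cP_{\deg(q)-1}$, and the latter holds because $g=s_-^{-1}s_+^{-1}(qh+r)\in H^p$ already forces $s_-$ to divide the relevant remainder.

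Having the two reverse inclusions, equality of $\Dom(T_\om)$ and of $\Ran(T_\om)$ in \eqref{DomRanId} follows, and closedness of $\Ran(T_\om)$ has been noted. For the converse direction of the first ``iff'', suppose $s$ has a root $\al\in\BT$. Since $s,q$ are co-prime, $\al$ is not a root of $q$, so by Theorem~\ref{T:main2} there is $f\in qH^p$ not in $sH^p+\cP_{\deg(s)-1}$; write $f=q\wtil f$ with $\wtil f\in H^p$. Then $\wtil f\in qH^p+\cP_{\deg(q)-1}\subset\Dom(T_\om)$ by Proposition~\ref{P:DomRanIncl}, and $T_\om\wtil f=\BP(\text{remainder})$; I would argue that $T_\om\wtil f$ cannot lie in any closed subspace forcing closed range, by showing the range contains $sH^p+\wtil\cP$ as a dense but proper subspace — concretely, $T_\om(qH^p+\cP_{\deg(q)-1})=sH^p+\wtil\cP$ contains $\cP$ (which is dense), while if $\Ran(T_\om)$ were closed it would equal $H^p$, contradicting that $T_\om$ factors through multiplication by $s$ whose range $sH^p$ is not closed (Theorem~\ref{T:main1}). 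Equivalently, $sH^p+\wtil\cP$ is closed iff $sH^p$ is closed iff $s$ has no roots on $\BT$, since adding the finite-dimensional $\wtil\cP$ does not affect closedness. The main obstacle I anticipate is the reverse inclusion $\Dom(T_\om)\subset qH^p+\cP_{\deg(q)-1}$: extracting that $s_-$ divides the polynomial remainder requires carefully tracking the analyticity of $g$ at the roots of $s_-$ inside $\BD$ and combining it with the Bezout/invariance machinery of Lemma~\ref{L:invar}, rather than a one-line computation.
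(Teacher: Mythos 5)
Your overall architecture matches the paper's proof (closedness of $sH^p+\wtil\cP$ handled via the finite-dimensional complement $\wtil\cQ$ and Theorem \ref{T:main1}; the reverse inclusion $\Dom(T_\om)\subset qH^p+\cP_{\deg(q)-1}$ via the factorization $s=s_-s_+$ and Lemma \ref{L:invar}; non-closedness via Theorem \ref{T:main2}), but the two hardest steps are not actually closed. The serious problem is in the reverse domain inclusion. You reduce to showing $s_+g=s_-^{-1}(qh+r)\in qH^p+\cP_{\deg(q)-1}$ and then propose to split this ``via a Bezout identity as in the proof of Lemma \ref{L:invar}, reducing to showing $s_-^{-1}r\in qH^p+\cP_{\deg(q)-1}$.'' That reduction is false: the termwise split in Lemma \ref{L:invar} works only because $s_+^{-1}\in H^\infty$, whereas $s_-$ has roots \emph{inside} $\BD$, so neither $qs_-^{-1}h$ nor $s_-^{-1}r$ lies in $H^p$ individually. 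Indeed $s_-$ does not divide $r$ in general: evaluating $sg=qh+r$ at a root $\al\in\BD$ of $s_-$ gives $r(\al)=-q(\al)h(\al)\neq 0$ generically, so the assertion that ``$g\in H^p$ forces $s_-$ to divide the relevant remainder'' does not apply to $r$. The step can be rescued, but only by applying the Bezout identity $qc+s_-d\equiv 1$ \emph{multiplicatively to the whole element} $s_+g$, keeping $qh+r$ together: $s_+g=(s_+g)(qc+s_-d)=q(cs_+g)+d(s_-s_+g)=q(cs_+g+dh)+dr$, and since $dr\in\cP\subset qH^p+\cP_{\deg(q)-1}$ this gives $s_+g\in qH^p+\cP_{\deg(q)-1}$, after which Lemma \ref{L:invar} finishes. (The paper instead first decomposes $h=sh'+r'$ with $r'\in\cP_{\deg(s_-)-1}$ via Theorem \ref{T:main1}, forms the remainder $r_2$ from $qr'+r=r_1s_-+r_2$, and shows $r_2\equiv 0$ by evaluating $r_2=s(g-qh'-r_1s_+^{-1})$ and its derivatives at the roots of $s_-$ --- that is the correct divisibility argument you were reaching for, but it applies to $r_2$, not to $r$.)

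The converse direction also has a gap. From closedness of $\Ran(T_\om)$ you can only conclude $\Ran(T_\om)+\wtil\cQ=H^p$ (since $\overline{sH^p+\wtil\cP}+\wtil\cQ\supset\overline{sH^p+\cP_{\deg(s)-1}}=H^p$), not $\Ran(T_\om)=H^p$; when $\deg(s)>\deg(q)$ these differ. And even granting surjectivity, ``contradicting that $T_\om$ factors through multiplication by $s$ whose range is not closed'' is not an argument --- you must actually use the witness from Theorem \ref{T:main2}. The paper's contradiction runs: take $h$ with $qh\notin sH^p+\cP_{\deg(s)-1}$, write $h=h'+r'$ with $h'\in\Ran(T_\om)$ and $r'\in\wtil\cQ$, pull back $h'$ to $sg=qh'+r$, and unwind to exhibit $qh=s(g-r_1)+(qr'-r_2)\in sH^p+\cP_{\deg(s)-1}$, a contradiction; the degree bounds $\deg(qr')<\deg(s)$ (from $r'\in\wtil\cQ$) are what make this work. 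Your sketch names the right ingredients but does not perform this computation, and as stated the inference is invalid. A minor further point: closedness of $sH^p$ comes from Theorem 8 of \cite{V03} (quoted inside the proof of Theorem \ref{T:main1}), not from the statement of Theorem \ref{T:main1} itself.
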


\begin{proof}[\bf Proof] The proof is divided into three parts.

\paragraph{\bf Part 1} In the first part we show that $s$ has no roots on $\BT$ if and only if $s H^p+\wtil\cP$ is closed in $H^p$. Note that for $\deg(s)\leq\deg(q)$ we have $\wtil\cP=\cP_{\deg(s)-1}$, and the claim coincides with Theorem \ref{T:main1}. For $\deg(s)>\deg(q)$, define $\wtil\cQ$ as in Lemma \ref{L:tilP}, viewed as a subspace of $H^p$. Since $\wtil\cQ$ is finite dimensional, $\wtil\cQ$ is closed in $H^p$. Hence, if $s H^p+\wtil\cP$ is closed, then so is $s H^p+\wtil\cP + \wtil\cQ=H^p+\cP_{\deg(s)-1}$. By Theorem \ref{T:main1}, the latter is equivalent to $s$ having no roots on $\BT$. Conversely, if $s$ has no roots on $\BT$, then $s H^p$ is closed, by Theorem 8 of \cite{V03} (see also the proof of Theorem \ref{T:main1}). Now using that $\wtil\cP$ is finite dimensional, and thus closed in $H^p$, it follows that $s H^p+\wtil\cP$ is closed.

\paragraph{\bf Part 2}
Now we show that $s H^p+\wtil\cP$ being closed implies \eqref{DomRanId}. In particular, this shows that $s$ having no roots on $\BT$ implies that $T_\om$ has closed range. Note that it suffices to show $\Dom(T_\om)\subset qH^p+\cP_{\deg(q)-1}$, since the equalities in \eqref{DomRanId} then follow directly from \eqref{DomRanIncl}. Assume $s H^p+\wtil\cP$ is closed. Then also $s H^p+\cP_{\deg(s)-1}$ is closed, as observed in the first part of the proof, and hence $s H^p+\cP_{\deg(s)-1}=H^p$. This also implies $s$ has no roots on $\BT$.

Write $s=s_- s_+$ with $s_-,s_+\in\cP$, with $s_-$ and $s_+$ having roots inside and outside $\BT$ only, respectively. Let $g\in\Dom (T_\om)$. Then $sg=qh+r$ for $h\in H^p$ and $r\in\cP_{\deg(q)-1}$. Note that $sH^p=s_- H^p$, since $s_+ H^p=H^p$.
By Theorem \ref{T:main1} we have $H^p=sH^p+\cP_{\deg(s_-)-1}$. Since $h\in H^p=sH^p+\cP_{\deg(s_-)-1}$, we can write $h=sh'+r'$ with $h'\in H^p$ and $r'\in \cP_{\deg(s_-)-1}$. Note that $\deg(qr'+r)<\deg(s_-q)$. We can thus write $qr'+r=r_1 s_- + r_2$ with $\deg(r_1)<\deg(q)$ and $\deg(r_2)<\deg(s_-)$. Then
\[
sg = qh+r=qsh'+qr'+r=qsh'+ r_1s_- +r_2=s(qh'+r_1s_+^{-1}) + r_2.
\]
Hence $r_2=s(g- qh'-r_1s_+^{-1})$. Since $\deg(r_2)<\deg(s_-)$, we can evaluate both sides (as well as the derivatives on both sides) at the roots of $s_-$, to arrive at $r_2\equiv 0$. Hence $s(g- qh'-r_1s_+^{-1})\equiv 0$. Dividing by $s$, we find $g=qh'+r_1s_+^{-1}$. Since $q$ and $s_+$ are co-prime and $r_1\in\cP_{\deg(q)-1}$, by Lemma \ref{L:invar} we have $r_1s_+^{-1}\in q H^p+\cP_{\deg(q)-1}$. Thus $g=qh'+r_1s_+^{-1}\in q H^p+\cP_{\deg(q)-1}$.

\paragraph{\bf Part 3}
In the last part we show that if $s$ has roots on $\BT$, then $T_\om$ does not have closed range. Hence assume $s$ has roots on $\BT$. Also assume $\Ran(T_\om)$ is closed. Since $sH^p +\wtil\cP \subset \Ran(T_\om)$ and $\Ran(T_\om)$ is closed, also $\overline{sH^p +\wtil\cP}\subset \Ran(T_\om)$. Since $\wtil\cQ$ is finite dimensional, and hence closed, $\overline{sH^p +\wtil\cP}+\wtil\cQ$ is closed and we have
\[
\overline{sH^p +\wtil\cP}+\wtil\cQ=\overline{sH^p +\wtil\cP+\wtil\cQ}=\overline{sH^p +\cP_{\deg(s)-1}}=H^p.
\]
Therefore, we have
\[
H^p=\overline{sH^p +\wtil\cP}+\wtil\cQ\subset \Ran(T_\om)+\wtil\cQ\subset H^p.
\]
It follows that $\Ran(T_\om)+\wtil\cQ=H^p$.

Let $h\in H^p$ such that $qh\not\in sH^p+\cP_{\deg(s)-1}$, which exists by Theorem \ref{T:main2}. Write $h=h'+r'$ with $h'\in\Ran(T_\om)$ and $r'\in\wtil\cQ$. Since $h'\in \Ran(T_\om)$, there exist $g\in H^p$ and $r\in\cP_{\deg(q)-1}$ such that
\[
sg=qh'+r=q(h-r')+r=qh -qr'+r.
\]
Write $r$ as $r=sr_1+r_2$ with $r_1,r_2\in\cP$, $\deg(r_2)<\deg(s)$. Note that $r'\in\wtil\cQ$, so that $\deg(qr')<\deg(s)$. Thus
\[
qh=sg+qr'-r=sg+qr'-sr_1-r_2=s(g-r_1)+(qr'-r_2)\in s H^p+\cP_{\deg(s)-1},
\]
in contradiction with $qh\not\in s H^p+\cP_{\deg(s)-1}$. Hence $\Ran(T_\om)$ is not closed.
\end{proof}

When $s$ has no roots on $\BT$ we have $\Ran(T_\om)=s H^p+\wtil\cP$ and thus, by Lemma \ref{L:tilP}, $\Ran(T_\om)+\wtil\cQ=H^p$. However, this need not be a direct sum in case $s$ has roots outside $\BT$. In the next lemma we obtain a different formula for $\Ran(T_\om)$, for which we can determine a complement in $H^p$.

\begin{lemma}\label{L:RanDirectSum}
Let $\omega\in \Rat(\mathbb{T})$, say $\om=s/q$ with $s,q\in\cP$ co-prime. Assume $s$ has no roots on $\BT$. Write $s = s_-s_+$ with the roots of $s_-$ and $s_+$  inside and outside $\mathbb{T}$, respectively. Define
\[
\wtil\cP_- = \{ r\in\cP \mid r q = \what r_1 s_- + \what r_2 \mbox{ for } \what r_1,\what r_2\in\mathcal{P}_{\deg(q)-1}\}
\]
and define $\wtil\cQ_-=\cP_{\deg(s_-)-\deg(q)-1}$ if $\deg(s_-)>\deg(q)$ and $\wtil\cQ_-=\{0\}$ if $\deg(s_-)\leq\deg(q)$. Then
\[
\Ran(T_\om)= s_-H^p \dot{+}\wtil\cP_-\ands \Ran(T_\om)\dot{+} \wtil\cQ_-= H^p.
\]
In particular, $\codim \Ran(T_\om)=\max\{0,\deg(s_-)-\deg(q)\}$.
\end{lemma}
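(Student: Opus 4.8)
The plan is to start from the description $\Ran(T_\om)=sH^p+\wtil\cP$ of Theorem~\ref{T:DomRanIds} (valid since $s$ has no roots on $\BT$) and to rewrite everything in terms of $s_-$ alone, which brings into play the direct-sum decompositions available for polynomials with all roots in $\BD$. First I would record the reductions: since $s_+$ has all its roots outside $\BT$ we have $1/s_+\in H^\infty$, so $s_+H^p=H^p$ and $sH^p=s_-H^p$; thus $\Ran(T_\om)=s_-H^p+\wtil\cP$ and in particular $s_-H^p\subset\Ran(T_\om)$. Because $s_-$ has all its roots inside $\BT$, Theorem~\ref{T:main1} together with Proposition~\ref{P:directsum} gives $H^p=s_-H^p\dot{+}\cP_{\deg(s_-)-1}$; since $s$ and $q$ are co-prime so are $s_-$ and $q$, and Lemma~\ref{L:tilP} applied with $s$ replaced by $s_-$ (a purely algebraic statement) gives $\cP_{\deg(s_-)-1}=\wtil\cP_-\dot{+}\wtil\cQ_-$, the degree count there also showing $\wtil\cP_-\subset\cP_{\deg(s_-)-1}$. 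Combining these, $H^p=s_-H^p\dot{+}\wtil\cP_-\dot{+}\wtil\cQ_-$.

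The heart of the proof is the identity $\Ran(T_\om)=s_-H^p+\wtil\cP_-$. For the inclusion $\supset$ we already have $s_-H^p\subset\Ran(T_\om)$, so it remains to show $\wtil\cP_-\subset\Ran(T_\om)$, and I would do this by exhibiting a pre-image: given $\hat r\in\wtil\cP_-$, write $\hat rq=\what r_1s_-+\what r_2$ with $\what r_1,\what r_2\in\cP_{\deg(q)-1}$ and put $g:=\what r_1/s_+$. Then $g\in H^\infty\subset H^p$ since $1/s_+\in H^\infty$, and $\om g=sg/q=s_-\what r_1/q=(\hat rq-\what r_2)/q=\hat r-\what r_2/q\in L^p+\Rat_0(\BT)$, so $g\in\Dom(T_\om)$ and $T_\om g=\BP\hat r=\hat r\in\Ran(T_\om)$.

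For the inclusion $\subset$, since $\Ran(T_\om)=s_-H^p+\wtil\cP$ it suffices to prove $\wtil\cP\subset s_-H^p+\wtil\cP_-$. Given $r\in\wtil\cP$, say $rq=r_1s+r_2$ with $r_1,r_2\in\cP_{\deg(q)-1}$, use Euclidean division to write $r=s_-h+\hat r$ with $h\in\cP$ and $\deg(\hat r)<\deg(s_-)$; substituting and using $s=s_-s_+$ gives $\hat rq=rq-s_-hq=s_-(r_1s_+-hq)+r_2$. Since $\deg(\hat rq)\le\deg(s_-)+\deg(q)-1$, whereas a nonzero polynomial of the form $s_-m$ with $\deg(m)\ge\deg(q)$ has degree $\ge\deg(s_-)+\deg(q)$ and would dominate $r_2$, we must have $r_1s_+-hq\in\cP_{\deg(q)-1}$; hence $\hat r\in\wtil\cP_-$ and $r=s_-h+\hat r\in s_-H^p+\wtil\cP_-$. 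This degree bookkeeping is the only slightly delicate point; everything else is formal.

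Finally, $s_-H^p\cap\wtil\cP_-\subset s_-H^p\cap\cP_{\deg(s_-)-1}=\{0\}$ by Proposition~\ref{P:directsum}, so the sum $\Ran(T_\om)=s_-H^p\dot{+}\wtil\cP_-$ is direct; combining this with the decomposition $H^p=s_-H^p\dot{+}\wtil\cP_-\dot{+}\wtil\cQ_-$ from the first paragraph yields $\Ran(T_\om)\dot{+}\wtil\cQ_-=H^p$. The codimension statement is then immediate, since $\codim\Ran(T_\om)=\dim\wtil\cQ_-$, which is $\deg(s_-)-\deg(q)$ when $\deg(s_-)>\deg(q)$ and $0$ otherwise, that is, $\max\{0,\deg(s_-)-\deg(q)\}$.
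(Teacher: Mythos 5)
Your proof is correct and follows essentially the same route as the paper: both reduce to the identity $\Ran(T_\om)=s_-H^p+\wtil\cP_-$ via Theorem~\ref{T:DomRanIds} and then obtain the direct-sum and codimension claims from Proposition~\ref{P:directsum} and Lemma~\ref{L:tilP} applied with $s$ replaced by $s_-$. The only (harmless) variations are in the two inclusions: for $\wtil\cP_-\subset\Ran(T_\om)$ you exhibit the preimage $\what r_1/s_+$ directly from the definition of $T_\om$ rather than invoking Lemma~\ref{L:invar} as the paper does, and for $\wtil\cP\subset s_-H^p+\wtil\cP_-$ you divide $r$ by $s_-$ and argue by degrees, whereas the paper divides $s_+r_1$ by $q$ — both computations are valid.
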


\begin{proof}[\bf Proof]
It suffices to prove that $\Ran(T_\om)= s_-H^p +\wtil\cP_-$, that is, $s H^p+\wtil\cP= s_-H^p +\wtil\cP_-$, by Theorem \ref{T:DomRanIds}. Indeed, the direct sum claims follow since $H^p=s_- H^p\dot{+}\cP_{\deg(s_-)}$ is a direct sum decomposition of $H^p$, by Proposition \ref{P:directsum}, and $\cP_{\deg(s_-)}=\wtil\cP_-\dot{+}\wtil\cQ_-$ is a direct sum decomposition of $\cP_{\deg(s_-)}$, by applying Lemma \ref{L:tilP} with $s$ replaced by $s_-$.

We first show that  $s H^p+\wtil\cP\subset s_-H^p +\wtil\cP_-$. Let $f=sh+r$ with $h\in H^p$ and $r\in\wtil\cP$, say $rq=s r_1 +r_2$. Then $rq=s_-(s_+r_1)+r_2$. Now write $s_+r_1=q \wtil r_1+ \wtil r_2$ with $\deg(\wtil r_2)<\deg(q)$. Since $\wtil r_2$ and $r_2$ have degree less than $\deg(q)$ and
\[
q(r-s_-\wtil r_1)=s_-(s_+r_1)+r_2-q s_-\wtil r_1=s_- \wtil r_2 + r_2,
\]
it follows that $r-s_-\wtil r_1\in \wtil\cP_-$. Therefore
\[
f = sh+r= s_- (s_+h+ \wtil r_1)+(r-s_-\wtil r_1)\in s_- H^p+\wtil\cP_-.
\]
Thus $s H^p+\wtil\cP\subset s_-H^p +\wtil\cP_-$.

To complete the proof we prove the reverse implication. Let $f=s_- h+r\in s_- H^p+\wtil\cP_-$ with $h\in H^p$ and $r\in\wtil\cP_-$, say $rq= s_- \what r_1 + \what r_2$ with $\what r_1,\what r_2\in\cP_{\deg(q)-1}$. Set
\[
g=s_+^{-1}(qh+\what r_1)\in s_+^{-1}(q H^p+\cP_{\deg(q)-1})=q H^p+\cP_{\deg(q)-1},
\]
with the last identity following from Lemma \ref{L:invar}. Then $g\in \Dom(T_\om)$ and $T_\om g\in sH^p+\wtil\cP$. We show that $T_\om g=f$ resulting in $f\in sH^p+\wtil\cP$, as desired. We have
\begin{align*}
  sg &= s_- (qh+\what r_1)= s_- qh+ s_-\what r_1= s_- qh +rq -\what r_2\\
  &=q(s_- h + r)-\what r_2\in q H^p+\cP_{\deg(q)-1}.
\end{align*}
This proves $T_\om g=s_- h + r=f$, which completes our proof.
\end{proof}

Before proving Theorem \ref{T:Mainthm1a} we first give a few direct corollaries.

\begin{corollary}\label{C:RanCodim}
Let $\om\in\Rat(\BT)$ have no zeroes on $\BT$. Then
\begin{align*}
 & \codim \Ran(T_\om) = \\
 &\qquad = \max \left\{0,
\sharp\!\left\{
\begin{array}{l}\!\!\!
\textrm{zeroes of } \omega \textrm{ in }\mathbb{D} \textrm{ multi.}\!\!\!\!\\
\!\!\!\textrm{taken into account}\!\!\!
\end{array}\right\} - \sharp\!\left\{
\begin{array}{l}\!\!\!
\textrm{poles of }\omega \textrm{ multi.}\!\!\! \\
\!\!\!\textrm{taken into account}\!\!\!
\end{array}\right\}
\right \}.
\end{align*}
In particular, $T_\om$ is surjective if and only if the number of zeroes of $\om$ inside $\BD$ is less than or equal to the number of poles of $\om$ (all on $\BT$), in both cases with multiplicity taken into account.
\end{corollary}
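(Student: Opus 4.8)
The plan is to obtain everything directly from Lemma \ref{L:RanDirectSum} together with the dictionary between polynomial degrees and geometric data. Write $\om = s/q$ with $s,q\in\cP$ co-prime; since $\om\in\Rat(\BT)$, the polynomial $q$ has all its roots on $\BT$, and by hypothesis $s$ has no roots on $\BT$, so we may factor $s = s_- s_+$ with $s_-$ collecting the roots of $s$ inside $\BD$ and $s_+$ those outside $\overline{\BD}$. Lemma \ref{L:RanDirectSum} then applies verbatim and gives $\Ran(T_\om)\dot{+}\wtil\cQ_- = H^p$ with $\wtil\cQ_-$ finite dimensional, whence $\codim\Ran(T_\om) = \dim\wtil\cQ_- = \max\{0,\deg(s_-)-\deg(q)\}$.

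The only remaining step is to translate the two degrees into counts of zeroes and poles. Because $s$ and $q$ are co-prime, the zeroes of $\om$ are exactly the roots of $s$ and the poles of $\om$ are exactly the roots of $q$, in both cases with multiplicity. The roots of $s$ that lie in $\BD$ are precisely the roots of $s_-$, so $\deg(s_-)$ equals the number of zeroes of $\om$ in $\BD$, multiplicities taken into account; and $\deg(q)$ equals the number of poles of $\om$ (which all lie on $\BT$), again with multiplicity. Substituting these two identifications into $\codim\Ran(T_\om) = \max\{0,\deg(s_-)-\deg(q)\}$ yields the displayed formula.

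For the ``in particular'' clause, note first that, since $s$ has no roots on $\BT$, Theorem \ref{T:DomRanIds} guarantees that $\Ran(T_\om)$ is closed; hence $T_\om$ is surjective if and only if $\codim\Ran(T_\om) = 0$, equivalently $\deg(s_-)\le\deg(q)$, which by the preceding paragraph is exactly the statement that the number of zeroes of $\om$ in $\BD$ does not exceed the number of poles of $\om$, with multiplicities counted on both sides.

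I expect no real obstacle here: the substantive work has already been carried out in Lemma \ref{L:RanDirectSum} and Theorem \ref{T:DomRanIds}. The only points requiring care are the bookkeeping of multiplicities and being explicit that ``zeroes in $\BD$'' refers to the open disc — which is harmless under the present hypothesis since $s$ has no roots on $\BT$, but should be stated consistently so the reader can compare cleanly with the kernel-dimension count in Corollary \ref{C:kerdim}.
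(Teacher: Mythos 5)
Your proposal is correct and follows exactly the route the paper intends: the corollary is stated as a direct consequence of Lemma \ref{L:RanDirectSum}, with the only work being the identification of $\deg(s_-)$ with the number of zeroes of $\om$ in $\BD$ and $\deg(q)$ with the number of poles, multiplicities counted. Your bookkeeping and the surjectivity equivalence are both handled correctly.
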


\begin{corollary}\label{C:Fredholm}
Let $\om\in\Rat(\BT)$. Then $T_\om$ is Fredholm if and only if $\om$ has no zeroes on $\BT$. In that case the Fredholm index of $T_\om$ is given by
$$
\Index(T_\om) =
\sharp\left\{
\begin{array}{l}\!\!\!
\textrm{poles of }\omega \textrm{ multi.}\!\!\! \\
\!\!\!\textrm{taken into account}\!\!\!
\end{array}\right\} -
\sharp\left\{
\begin{array}{l}\!\!\!
\textrm{zeroes of } \omega \textrm{ in }\mathbb{D} \textrm{ multi.}\!\!\!\\
\!\!\!\textrm{taken into account}\!\!\!
\end{array}\right\}
.
$$
\end{corollary}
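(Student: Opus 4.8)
The plan is to combine the three structural results established just before this corollary: the kernel dimension formula (Corollary \ref{C:kerdim}), the characterization of when $T_\om$ has closed range together with the domain/range identities (Theorem \ref{T:DomRanIds}), and the codimension formula (Corollary \ref{C:RanCodim}). First I would recall that a closed, densely defined operator is Fredholm precisely when it has finite-dimensional kernel and its range has finite codimension (and then the range is automatically closed, cf.\ \cite{G66}). By Proposition \ref{P:welldefclosed}, $T_\om$ is closed and densely defined, so it remains to track finiteness of $\dim\kernel(T_\om)$ and $\codim\Ran(T_\om)$.

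Next I would split into the two cases according to whether $s$ has roots on $\BT$. If $s$ has no roots on $\BT$, then by Theorem \ref{T:DomRanIds} the range of $T_\om$ is closed, and Corollary \ref{C:RanCodim} gives $\codim\Ran(T_\om)=\max\{0,\deg(s_-)-\deg(q)\}<\infty$; moreover $\kernel(T_\om)$ is finite-dimensional by Lemma \ref{L:kernel2} (with $s_0\equiv 1$), with dimension $\max\{0,\deg(q)-\deg(s_-)\}$ as in Corollary \ref{C:kerdim}. Hence $T_\om$ is Fredholm. Conversely, if $s$ has a root on $\BT$ — equivalently $\om$ has a zero on $\BT$, since $\om=s/q$ is in lowest terms — then Theorem \ref{T:DomRanIds} tells us $\Ran(T_\om)$ is not closed, and a closed Fredholm operator necessarily has closed range, so $T_\om$ cannot be Fredholm. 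This gives the ``if and only if''.

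For the index, assume $\om$ has no zeroes on $\BT$, so $s=s_-s_+$ with $s_-$, $s_+$ having roots inside, outside $\BT$. Using the explicit formulas, $\Index(T_\om)=\dim\kernel(T_\om)-\codim\Ran(T_\om)=\max\{0,\deg(q)-\deg(s_-)\}-\max\{0,\deg(s_-)-\deg(q)\}=\deg(q)-\deg(s_-)$. Now $\deg(q)$ is exactly the number of poles of $\om$ (all on $\BT$) counted with multiplicity, while $\deg(s_-)$ is the number of zeroes of $\om$ inside $\BD$ counted with multiplicity (the zeroes of $s$ inside $\BT$; there are none on $\BT$ by hypothesis, and those outside $\BD$ are the roots of $s_+$, which do not contribute). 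This yields the stated formula. The only mildly delicate point — really just bookkeeping — is to observe that $\max\{0,a\}-\max\{0,-a\}=a$ for any real $a$, so the two ``$\max$'' expressions collapse cleanly; there is no genuine obstacle here since all the hard analysis is already contained in Theorem \ref{T:DomRanIds} and Lemma \ref{L:kernel2}.
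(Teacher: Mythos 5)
Your proof is correct and follows exactly the route the paper intends: the paper presents this as a direct corollary of Theorem \ref{T:DomRanIds} (closed range if and only if no zeroes on $\BT$), Lemma \ref{L:RanDirectSum}/Corollary \ref{C:RanCodim} (codimension $\max\{0,\deg(s_-)-\deg(q)\}$), and Lemma \ref{L:kernel2}/Corollary \ref{C:kerdim} (kernel dimension $\max\{0,\deg(q)-\deg(s_-)\}$), with the index obtained by the same cancellation of the two $\max$ terms. No gaps.
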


\begin{corollary}\label{C:InjSurj}
Let $\om\in\Rat(\BT)$ have no zeroes on $\BT$. Then $T_\om$ is either injective or surjective, and $T_\om$ is both injective and surjective if and only if the number of poles of $\om$ coincides with the number of zeroes inside $\BD$.
\end{corollary}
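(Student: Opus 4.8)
The plan is to read off Corollary~\ref{C:InjSurj} directly from the three preceding corollaries, which already do all the work. First I would invoke Corollary~\ref{C:Fredholm} (or just Corollaries~\ref{C:kerdim} and~\ref{C:RanCodim}) to record, under the standing hypothesis that $\om$ has no zeroes on $\BT$, the two formulas
\[
\dim\kernel(T_\om)=\max\{0,\,P-Z\},\qquad \codim\Ran(T_\om)=\max\{0,\,Z-P\},
\]
where $P$ denotes the number of poles of $\om$ (all on $\BT$, multiplicities counted) and $Z$ the number of zeroes of $\om$ inside $\BD$ (multiplicities counted). Note that these use exactly that $\om\in\Rat(\BT)$, so the zeroes in $\overline\BD$ appearing in Corollary~\ref{C:kerdim} are the zeroes in $\BD$ together with the (absent) zeroes on $\BT$, hence just $Z$.

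The key observation is then purely arithmetic: for any real numbers $P,Z$ at most one of $\max\{0,P-Z\}$ and $\max\{0,Z-P\}$ is nonzero, and both vanish precisely when $P=Z$. Applying this with the two displayed formulas gives that at least one of $\kernel(T_\om)$ and the complement of $\Ran(T_\om)$ is trivial, i.e.\ $T_\om$ is injective or surjective; and that $T_\om$ is both injective and surjective exactly when $P=Z$, which is the assertion that the number of poles of $\om$ coincides with the number of zeroes of $\om$ inside $\BD$. For the injectivity-or-surjectivity part one could equally cite the corresponding sentences already embedded in Corollaries~\ref{C:kerdim} and~\ref{C:RanCodim} (``$T_\om$ is injective if and only if $Z\ge P$'' and ``$T_\om$ is surjective if and only if $Z\le P$''), whose disjunction is a tautology; I would mention this as the cleaner phrasing.

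There is essentially no obstacle here: the corollary is a bookkeeping consequence of results proved above, and the only thing to be careful about is making sure the ``zeroes in $\overline\BD$'' of Corollary~\ref{C:kerdim} and the ``zeroes in $\BD$'' of Corollary~\ref{C:RanCodim} are matched up correctly, which is immediate because $\om$ has no zeroes on $\BT$ by hypothesis. So the proof is two or three sentences long, and I would write it as such rather than reproving anything.
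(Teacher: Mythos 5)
Your proposal is correct and matches the paper's intent exactly: the paper states this as a ``direct corollary'' of Corollary \ref{C:kerdim} and Corollary \ref{C:RanCodim} without further argument, and your bookkeeping — that $\max\{0,P-Z\}$ and $\max\{0,Z-P\}$ cannot both be nonzero and both vanish precisely when $P=Z$, with the zeroes in $\overline{\BD}$ and in $\BD$ identified via the no-zeroes-on-$\BT$ hypothesis — is precisely the implicit reasoning. Nothing is missing.
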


\begin{proof}[\bf Proof of Theorem \ref{T:Mainthm1a}]
Theorem \ref{T:Mainthm1a} follows by combining the various results from the present section. The claim the $T_\om$ is Fredholm if and only if $\om$ (or equivalently $s$) has no zeroes on $\BT$ along with the formula for the Fredholm index was given in Corollary \ref{C:Fredholm}, as a consequence of Theorem \ref{T:DomRanIds} and Lemma \ref{L:RanDirectSum}. The formula for $\kernel(T_\om)$ in \eqref{DRK} follows from Lemma \ref{L:kernel2}, noting that $s_0\equiv 1$, and the formulas for $\Dom(T_\om)$ and $\Ran(T_\om)$ follow from Theorem \ref{T:DomRanIds}. The formula for a complement of $\Ran(T_\om)$ is obtained in Lemma \ref{L:RanDirectSum}, and, finally, the claims regarding injectivity and surjectivity of $T_{\om}$ are listed in Corollary \ref{C:InjSurj}.
\end{proof}

\section{Fredholm properties of $T_\omega$: General case}\label{S:Fredholm2}

In this section we prove Theorem \ref{T:Mainthm1} in the general case, i.e., for $\om\in\Rat$. In order to do this we need some preliminary results, which are closely connected to non-canonical Wiener-Hopf factorization.

\begin{lemma}\label{L:factor}
Let $\om\in\Rat$ and denote by $\kappa=l^+ -l^-$ the difference between the number $l^+$ of zeroes of $\om$ in
$\BD$ and the number $l^-$ of poles of $\om$ in $\BD$.
Then we can write $$\om(z) = \om_-(z) (z^\kappa\om_0(z)) \om_+(z)$$ where $\om_-$ has no poles or zeroes outside ${\BD}$, $\om_+$ has no poles or zeroes inside $\overline{\BD}$ and $\om_0$ has all its poles and zeroes on $\BT$, i.e. $\om_0\in\Rat(\BT)$. The functions $\om_-,\om_0, \om_+$ are unique up to a multiplicative constant. In this case we have
$$ T_\om = T_{\om_-} T_{z^\kappa\om_0} T_{\om_+}.$$
\end{lemma}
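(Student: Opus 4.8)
The plan is to prove the three assertions of the lemma in order: existence of the factorization, its uniqueness up to scalars, and the operator identity; the last is the substantive step.

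\emph{Existence.} Write $\om=s/q$ with $s,q\in\cP$ coprime and split $s=s_-s_0s_+$, $q=q_-q_0q_+$ according to whether the roots lie in $\BD$, on $\BT$, or in $\BC\setminus\overline\BD$; then $\deg s_-=l^+$ and $\deg q_-=l^-$. I would set $\om_-(z)=z^{l^{-}-l^{+}}s_-(z)/q_-(z)$, $\om_0=s_0/q_0\in\Rat(\BT)$ and $\om_+=s_+/q_+$, and check directly that $\om=\om_-(z^{\kappa}\om_0)\om_+$ with $\kappa=l^+-l^-$. All zeros and poles of $\om_-$ — including the possible one at the origin — lie in $\BD$, and since its numerator and denominator have equal degree, $\om_-$ and $1/\om_-$ are bounded, analytic and nonvanishing on $\{|z|\ge1\}\cup\{\infty\}$; equivalently their restrictions to $\BT$ lie in $\overline{H^\infty}$ (Fourier support in $\{n\le 0\}$). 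Likewise $\om_+,1/\om_+\in H^\infty$ since $s_+,q_+$ have no roots in $\overline\BD$. That all the relevant operators are well-defined and closed is Proposition~\ref{P:welldefclosed}.

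\emph{Uniqueness.} Given another factorization $\om=\wtil\om_-(z^{\kappa}\wtil\om_0)\wtil\om_+$ of the same form, I would consider $\om_-/\wtil\om_-=(\wtil\om_0\wtil\om_+)/(\om_0\om_+)$: the left-hand side is a rational function with all zeros and poles in $\BD$, the right-hand side has all zeros and poles in $\BC\setminus\BD$, and both are bounded and nonvanishing at $\infty$; hence this function has no zeros or poles in $\BC$, so it is constant, $\om_-=c_1\wtil\om_-$. Substituting back gives $\om_0/\wtil\om_0=c_1^{-1}\wtil\om_+/\om_+$, whose zeros and poles lie on $\BT$ on one side and in $\BC\setminus\overline\BD$ on the other, forcing $\om_0=c_2\wtil\om_0$ and then $\om_+=(c_1c_2)^{-1}\wtil\om_+$.

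\emph{Operator identity.} I would obtain $T_\om=T_{\om_-}T_{z^{\kappa}\om_0}T_{\om_+}$ from two composition identities, peeling $\om_+$ off on the right and then $\om_-$ off on the left. First: if $\phi\in\Rat$ and $\psi\in\Rat\cap H^\infty$, then $T_{\phi\psi}=T_\phi T_\psi$; indeed $T_\psi f=\psi f\in H^p$, so both operators have domain $\{f\in H^p:\phi\psi f\in L^p+\Rat_0(\BT)\}$ and return $\BP$ of the $L^p$-part of $\phi\psi f$. Applying this with $\psi=\om_+$ reduces matters to $T_{\om_-(z^{\kappa}\om_0)}=T_{\om_-}T_{z^{\kappa}\om_0}$. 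Second: if $\phi\in\Rat$ and $\eta\in\Rat$ with $\eta,1/\eta$ bounded, analytic and nonvanishing on $\{|z|\ge1\}\cup\{\infty\}$ (so $\eta=\om_-$ qualifies), then $T_{\eta\phi}=T_\eta T_\phi$. For the domains: if $\phi f=h+\rho$ with $h\in L^p$, $\rho\in\Rat_0(\BT)$, then $\eta\phi f=\eta h+\eta\rho$ with $\eta h\in L^p$, and writing $\eta\rho=\mu_0+\mu_1$ by Lemma~\ref{L:reducRatT} (with $\mu_0\in\Rat_0(\BT)$ and $\mu_1\in\Rat$ having no poles on $\BT$, hence $\mu_1\in L^p$) shows $f\in\Dom(T_{\eta\phi})$; the reverse inclusion is the same computation with $1/\eta$. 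For the values, the crucial observation is that $\eta\rho$ is strictly proper with all of its poles in $\BD$ (because $\eta=\om_-$ has poles only in $\BD$ and $\rho$ is strictly proper), so $\mu_1$ is strictly proper with poles in $\BD$, whence $\mu_1\in K^p$ and $\BP\mu_1=0$; combined with the standard fact $\BP(\eta h)=\BP(\eta\BP h)$ for co-analytic $\eta$ (the term $\eta(I-\BP)h$ has Fourier support in $\{n\le-1\}$), this gives $T_{\eta\phi}f=\BP(\eta h+\mu_1)=\BP(\eta\BP h)=T_\eta(\BP h)=T_\eta T_\phi f$. Composing the two identities gives the claim.

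\emph{Main obstacle.} The delicate point is precisely this last value computation: multiplying $\phi f=h+\rho$ through by $\om_-$ creates spurious rational terms, and one must verify the Riesz projection annihilates them. This succeeds exactly because $\om_-$ has all its poles strictly inside $\BD$, so those terms are co-analytic; the analogous naive manipulation on the left with a factor having a pole on or outside $\BT$ would fail, which is why the factorization must split the three regions as it does.
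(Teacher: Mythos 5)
Your proof is correct and follows essentially the same route as the paper: the same choice of factors (with $\om_-$ normalized at infinity so that numerator and denominator degrees match), the same use of Lemma \ref{L:reducRatT} to split $\om_-\rho$ into a strictly proper part with poles in $\BD$ (annihilated by $\BP$) plus a $\Rat_0(\BT)$ remainder, and the same co-analyticity observation $\BP\bigl(\om_-(I-\BP)h\bigr)=0$. The only difference is organizational: you package the verification as two general composition identities (analytic factor peeled off on the right, co-analytic factor on the left, in the spirit of the classical $T_{abc}=T_aT_cT_b$), whereas the paper treats the triple product in a single calculation.
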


\begin{proof}[\bf Proof]
Suppose that $\om =s/q$ with $s,q\in\cP$ co-prime, and let $s = s_- s_0 s_+$ where $s_-$ is monic and has all its roots in $\BD$, $s_0$ has all its roots on $\BT$ and $s_+$ has all its roots outside $\overline{\BD}$. Let $q = q_- q_0 q_+$ be similarly defined, i.e. $q_-$ monic with all its roots in $\BD$, $q_0$ has all its roots on $\BT$ and $q_+$ has all its roots outside $\overline{\BD}$. Let $t_j, j=1\ldots l^+$ be the roots of $s_-$ and $\tau_j, j= 1,\ldots l^-$ be the roots of $q_-$, possibly with repetitions. Then we can write
$$\frac{s_-}{q_-} =\frac{\Pi_{j=1}^{l^+}(z- t_j ) }{\Pi_{j=1}^{l^-} (z - \tau_j )}= z^\kappa \frac{\Pi_{j=1}^{l^+}(1 - t_j z^{-1}) }{\Pi_{j=1}^{l^-} (1 - \tau_j z^{-1})}$$
where $\kappa = l^+ - l^-$.
Put $\om_0 = \frac{s_0}{q_0}$,
$$\om_- =\frac{1}{z^\kappa }\frac{s_-}{q_-}=\frac{1}{z^\kappa }\frac{\Pi_{j=1}^{l^+}(z- t_j ) }{\Pi_{j=1}^{l^-} (z - \tau_j )}$$
and $\om_+ = \frac{s_+}{q_+}$.
Then $\om_-$ has no zeroes or poles outside $\overline{\BD}$ including infinity, as $\lim_{z\to\infty}\om_-(z)=1$, $\om_+$ has no poles and zeroes inside $\overline{\BD}$, $\om_0\in\Rat(\BT)$
and we have the desired factorization $\om=\om_-(z^\kappa\om_0)\om_+$.

The uniqueness may be seen as follows: clearly $\kappa $ is uniquely determined by $\om$.
Suppose $\om_-^\prime\om_0^\prime\om_+^\prime=\om_-\om_0\om_+$. Then
$(\om_-^\prime)^{-1}\om_-\om_0=\om_0^\prime\om_+^\prime(\om_+)^{-1}$, and it follows that this is a function in $\Rat(\BT)$. It is then easily seen that there are constants $c_0,c_-,c_+$ such that
$\om_0^\prime =c_0\om_0$, $\om_-^\prime=c_-\om_-$ and $\om_+^\prime=c_+\om_+$, with
$c_-c_0c_+=1$.

Note that $f\in\Dom (T_{\om_-} T_{z^\kappa\om_0} T_{\om_+} )$ if and only if
\[
T_{\om_+}f=\om_+ f  \in \Dom  (T_{\om_-}T_{z^\kappa\om_0})=\Dom (T_{z^\kappa\om_0}).
\]
Now let $f\in\Dom (T_{\om_-} T_{z^\kappa\om_0} T_{\om_+} )$.
So there are $h\in L^p$ and $\rho\in\Rat_0(\BT)$ with $z^\kappa\om_0(\om_+f) = h + \rho$ and $T_{z^\kappa\om_0} \om_+f=\mathbb{P}h$.
Furthermore,
\[
\om f  = \om_- (z^\kappa\om_0\om_+ f) = \om_- (h + \rho)  = \om_- h + \om_-\rho .
\]
Now $\om_-\rho$ is a rational function which has poles only in the closed unit disc. Moreover,
as $\lim_{z\to\infty}\om_-(z)=1$ and $\rho\in\Rat_0(\BT)$ we have that $\omega_-\rho$ is strictly proper. Hence, by Lemma \ref{L:reducRatT}, we can write
$\om_-\rho$ uniquely as $\om_-\rho = g + \rho'$ with $g$ a rational function with poles only inside $\BD$ and $\rho'\in\Rat_0(\BT)$. Then also $g$ is a strictly proper rational function, as both
$\om_-\rho$ and $\rho'$ are strictly proper. We conclude that
\[
\om f =(\om_- h +g)+\rho'
\]
and since $\om_-h+g\in L^p$
we have $f\in \Dom (T_\om)$ and $T_\om f=\mathbb{P}(\om_-h+g)$.
Now since $g$ is a rational function which is strictly proper and it has all its poles in $\BD$, $g$ has a realization
$g(z)=c(z-A)^{-1}b$, with $A$ a stable matrix. Then $g(z)=\sum_{j=0}^\infty \frac{cA^jb}{z^{j+1}}$, and hence $\mathbb{P}g=0$. Thus we see that $f\in\Dom(T_\om)$ and $T_\om f=\mathbb{P}(\om_-h).$

On the other hand
\[
T_{\om_-}T_{z^\kappa\om_0}T_{\om_+}f=T_{\om_-}T_{z^\kappa\om_0}\om_+f
=T_{\om_-}(\mathbb{P}h)=\mathbb{P}(\om_-\mathbb{P}h).
\]
Write $h=h_-+h_+$, where $h_+=\mathbb{P}h$. Then $\BP(\om_-h_-)=0$ since both $\om_-$ and $h_-$ are anti-analytic. Thus $\om_-h=\om_-h_-+\om_-h_+$
and $\mathbb{P}(\om_-h)=\mathbb{P}(\om_-h_+)$. This implies that
\[
T_\om=T_{\om_-}T_{z^\kappa\om_0}T_{\om_+},
\]
provided $\Dom (T_\om)$ is equal to $\Dom (T_{\om_-} T_{z^\kappa\om_0} T_{\om_+})$.

We already proved that $\Dom (T_{\om_-} T_{z^\kappa\om_0} T_{\om_+})\subset \Dom (T_\om)$. To prove the reverse inclusion, suppose that $f\in \Dom (T_\om)$. Then there are $g\in L^p$ and $\rho\in\Rat_0(\BT)$ with $\om f = g + \rho$. Since $\om_-^{-1}\rho\in\Rat$ has poles only in $\BD$, by Lemma \ref{L:reducRatT} we can write $\om_-^{-1}\rho$ uniquely as $\om_-^{-1}\rho =  g' + \rho'$ with $g'$ strictly proper and with poles only in $\BD$ and $\rho'\in\Rat_0(\BT)$. Also, $\om_-^{-1} g\in L^p$ because $\om_-^{-1}$ has no poles on $\BT$. But then $z^\kappa\om_0\om_+ f = \om_-^{-1} g + \om_-^{-1}\rho = (\om_-^{-1} g + g') + \rho'$ is in $L^p+\Rat_0(\BT)$. Hence $\om_+ f \in\Dom(T_{z^\kappa\om_0})$, which implies $f\in \Dom(T_{\om_-} T_{z^\kappa\om_0} T_{\om_+} )$.
\end{proof}

\begin{remark}
Compare this with Theorem 16.2.3 of \cite{GGK02} and Proposition 2.14 of \cite{BS06} from which it follows that if $\overline{a}, b \in H^\infty$, $c\in L^\infty$ then $T_{abc} = T_a T_c T_b$.
\end{remark}

Observe that $T_{\om_-}$ and $T_{\om_+}$ are bounded and have a bounded inverse, in fact
$T_{\om_-}^{-1}=T_{\om_-^{-1}}$ and $T_{\om_+}^{-1}=T_{\om_+^{-1}}$. Hence the Fredholm properties of $T_\om$ are the same as the Fredholm properties of $T_{z^\kappa\om_0}$.
If $\kappa \geq 0$, then these properties are described by the results of the previous section. It remains to study the case where $\kappa <0$. For this case we have the following lemma.

\begin{lemma}\label{L:KappaNeg}
Let $\om\in\Rat(\BT)$ and $\kappa <0 $. Then $T_{z^{\kappa}\om} = T_{z^{\kappa}}T_\om $.
Moreover, $T_{z^{\kappa}}T_\om$ is Fredholm if and only if $T_\om T_{z^{\kappa}}$ is Fredholm.
\end{lemma}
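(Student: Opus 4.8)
The plan is to establish the two assertions in turn. Write $n=-\kappa>0$, so that $z^\kappa=z^{-n}$ and $T_{z^\kappa}=T_{z^{-n}}=(T_{z^{-1}})^n$ is a bounded operator (a power of the backward shift); in particular $\Dom(T_{z^\kappa}T_\om)=\Dom(T_\om)$. To prove the identity $T_{z^\kappa\om}=T_{z^\kappa}T_\om$ I would show that $\Dom(T_{z^{-n}\om})=\Dom(T_\om)$ and that the two operators have the same action on this domain.

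For $\Dom(T_\om)\subseteq\Dom(T_{z^{-n}\om})$: given $g\in\Dom(T_\om)$, Lemma \ref{L:domain} gives $\om g=h+r/q$ with $h=T_\om g\in H^p$ and $r\in\cP_{\deg(q)-1}$, where $\om=s/q$. Then $z^{-n}\om g=z^{-n}h+z^{-n}r/q$; since $|z^{-n}|=1$ on $\BT$ we have $z^{-n}h\in L^p$, while $z^{-n}r/q$ is a strictly proper rational function whose only pole off $\BT$ is at $0$, so by Lemma \ref{L:reducRatT} it decomposes as $\nu+\sigma$ with $\sigma\in\Rat_0(\BT)$ and $\nu\in K^p$ a linear combination of $z^{-1},\dots,z^{-n}$. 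Hence $z^{-n}\om g=(z^{-n}h+\nu)+\sigma\in L^p+\Rat_0(\BT)$, so $g\in\Dom(T_{z^{-n}\om})$, and $T_{z^{-n}\om}g=\BP(z^{-n}h+\nu)=\BP(z^{-n}h)=T_{z^{-n}}h=(T_{z^{-n}}T_\om)g$, using $\BP\nu=0$. For the reverse inclusion, if $g\in\Dom(T_{z^{-n}\om})$ write $z^{-n}\om g=u+\sigma$ with $u\in L^p$, $\sigma\in\Rat_0(\BT)$, and multiply by $z^n$: once more $z^nu\in L^p$, and $z^n\sigma\in L^p+\Rat_0(\BT)$ after dividing the numerator of $z^n\sigma$ by the denominator of $\sigma$. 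Thus $\om g\in L^p+\Rat_0(\BT)$ and $g\in\Dom(T_\om)$. This settles $T_{z^\kappa\om}=T_{z^\kappa}T_\om$.

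For the Fredholm equivalence I would first iterate the identity $T_{z^{-1}}T_\om T_z=T_\om$ of Proposition \ref{P:welldefclosed} (valid on $\Dom(T_\om)$, which is $T_z$-invariant) to obtain $T_{z^{-n}}T_\om T_{z^n}=T_\om$ on $\Dom(T_\om)$. The key additional input is that $\Dom(T_\om)$ is also invariant under $T_{z^{-1}}$, hence under $T_{z^{-n}}$: with $\Dom(T_\om)=\{g\in H^p\colon sg\in qH^p+\cP_{\deg(q)-1}\}$ from Lemma \ref{L:domain}, if $sg=qh+r$ then $s\,T_{z^{-1}}g=z^{-1}(qh+r-g(0)s)$, whose numerator vanishes at $0$; writing $r-g(0)s=qu+v$ with $\deg v<\deg q$ and then peeling off $q\,T_{z^{-1}}(h+u)$ together with polynomial remainders of degree $<\deg q$ shows this lies in $qH^p+\cP_{\deg(q)-1}$, so $T_{z^{-1}}g\in\Dom(T_\om)$. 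Using this, together with $T_{z^n}T_{z^{-n}}=I-Q_n$ where $Q_n$ is the bounded finite-rank projection of $H^p$ onto $\cP_{n-1}$ along $z^nH^p$, one checks that $\Dom(T_\om T_{z^{-n}})=\Dom(T_\om)$ and that for every $g\in\Dom(T_\om)$
\[
T_\om T_{z^{-n}}g=T_{z^{-n}}T_\om T_{z^n}(T_{z^{-n}}g)=T_{z^{-n}}T_\om(g-Q_ng)=T_{z^{-n}}T_\om g-T_{z^{-n}}T_\om Q_ng.
\]
Hence $T_\om T_{z^{-n}}$ and $T_{z^{-n}}T_\om$ are closed operators with the same domain $\Dom(T_\om)$, differing by the operator $F:=T_{z^{-n}}T_\om Q_n$, which is bounded and of finite rank since it factors through $\cP_{n-1}$. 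Because Fredholmness of a closed operator is preserved under bounded finite-rank --- in particular relatively compact --- perturbations (see \cite{G66}, Chapters~IV and~V), $T_{z^{-n}}T_\om$ is Fredholm if and only if $T_\om T_{z^{-n}}$ is, which is precisely the claim.

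The bookkeeping with strictly proper rational functions and Euclidean divisions is routine. The one step that requires genuine care, and which I expect to be the main obstacle, is verifying that $\Dom(T_\om)$ is invariant under the backward shift $T_{z^{-1}}$: this is not among the properties recorded earlier (only $T_z$-invariance is stated), yet without it the operators $T_\om T_{z^{-n}}$ and $T_{z^{-n}}T_\om$ need not have a common domain, and an infinite-codimension mismatch of domains would invalidate the finite-rank perturbation argument that forces their Fredholm properties to coincide. The division argument sketched above is what supplies this invariance.
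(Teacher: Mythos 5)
Your proof is correct, and for the first identity $T_{z^\kappa\om}=T_{z^\kappa}T_\om$ it follows the paper's argument essentially verbatim (decompose $z^\kappa\rho$ via Lemma \ref{L:reducRatT} into an anti-analytic part killed by $\BP$ plus a $\Rat_0(\BT)$ part, and use Euclidean division for the reverse inclusion). For the Fredholm equivalence both you and the paper rest on the same mechanism --- $T_{z^\kappa}T_\om-T_\om T_{z^\kappa}$ is a bounded finite-rank operator, so Fredholmness transfers by the stability theorems in \cite{G66} --- but you identify the finite-rank difference differently. The paper verifies directly that the commutator vanishes on $z^{-\kappa}\Dom(T_\om)$, which is dense in the finite-codimension subspace $z^{-\kappa}H^p$, by one Euclidean division $z^{-\kappa}r=qr_2+r_1$ applied to $f=z^{-\kappa}g$. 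You instead iterate the intertwining relation to $T_{z^{-n}}T_\om T_{z^n}=T_\om$ and combine it with $T_{z^n}T_{z^{-n}}=I-Q_n$, which requires the additional fact that $\Dom(T_\om)$ is invariant under the backward shift $T_{z^{-1}}$; that fact is not recorded in the paper, but your division argument for it is sound (the numerator $qh+r-g(0)s$ does vanish at $0$ because $q(0)\neq 0$, and peeling off the constant term of $h+u$ keeps the remainder in $\cP_{\deg(q)-1}$). Your route costs an extra lemma but buys an explicit formula $F=T_{z^{-n}}T_\om Q_n$ for the commutator and the precise statement that both products are closed operators on the common domain $\Dom(T_\om)$, a point the paper passes over more quickly; the paper's route is shorter because it never needs backward-shift invariance of the domain.
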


\begin{proof}[\bf Proof]
Let $\om=\frac{s}{q}$, where $s$ and $q$ are coprime and $q$ has all its roots on $\BT$.
First we show that $T_{z^\kappa\om}=T_{z^\kappa}T_\om$.

Let $f\in \Dom (T_\om)$, then $\om f=h+\phi$, with $h\in L^p$ and $\phi\in \Rat_0(\BT)$. Then
$z^\kappa\om f=z^\kappa h+z^\kappa\phi$. Clearly $z^\kappa h\in L^p$. Write $z^\kappa \phi =g'+\phi'$, where $g'$ is rational, strictly proper and has a pole only at zero (recall, $\kappa <0$), and $\phi'$ is in $\Rat_0(\BT)$; see
Lemma 2.4. Then $z^\kappa\om f= (z^\kappa h+g')+\phi'$, and hence $f\in \Dom(T_{z^\kappa\om})$. This shows $\Dom(T_{z^\kappa}T_\om)=\Dom (T_\om)\subset \Dom(T_{z^\kappa\om})$.

Conversely, if $f\in \Dom(T_{z^\kappa\om})$ then there is a $g\in L^p$ and a $\rho\in \Rat_0(\BT)$ such that $z^\kappa \om f =g+\rho$ and $T_{z^\kappa\om}f=\BP g$. Then $\om f= z^{-\kappa} g +z^{-\kappa}\rho$. Since $\kappa <0$ and $\rho\in\Rat_0(\BT)$, we have $z^{-\kappa}\in\cP$ and, by Euclidean division, we can write $z^{-\kappa}\rho= r+\psi$ with $r\in\cP_{-\kappa-1}$ and $\psi\in\Rat_0(\BT)$. Clearly $z^{-\kappa}g\in L^p$. Thus $\om f= (z^{-\kappa} g +r) +\psi$ is in $L^p+\Rat_0(\BT)$. Hence $f\in\Dom(T_{\om})=\Dom(T_{z^\kappa}T_\om)$ and $T_\om=\BP z^{-\kappa} g +r$. In particular,  this implies $\Dom(T_{z^\kappa}T_\om)=\Dom(T_{z^\kappa\om})$.

To complete the proof of the first claim, it remains to show that
\[
\BP g=T_{z^\kappa\om}f= T_{z^\kappa}T_\om f= T_{z^\kappa} (\BP z^{-\kappa} g +r)= \BP z^\kappa (\BP z^{-\kappa} g +r).
\]
Since $\deg (r)< -\kappa$, we have $\BP z^\kappa r=0$. Thus we have to show that $\BP g=\BP z^\kappa \BP z^{-\kappa} g$. Write $g(z)=\sum_{j=-\infty}^\infty z^j g_j$. Then $\mathbb{P} z^{-\kappa} g=\sum_{j=\kappa}^\infty g_j z^{j-\kappa}$. Since $\kappa<0$, we have
\[
\BP z^\kappa \mathbb{P} z^{-\kappa} g
=\BP \left(\sum_{j=\kappa}^\infty g_j z^{j}\right)
=\sum_{j=0}^\infty g_j z^{j} =\BP g,
\]
which finalizes the proof of the claim that $T_{z^\kappa\om}= T_{z^\kappa}T_\om$.

To prove the second part of the statement, we show that the difference $T_{z^\kappa}T_\om -T_\om T_{z^\kappa}$ is a bounded finite rank operator, from which the result follows. More specifically, we show that $T_{z^\kappa}T_\om -T_\om T_{z^\kappa}$ is zero on $z^{-\kappa} H^p$. Note that $z^{-\kappa} \Dom(T_\om)$ is dense in $z^{-\kappa} H^p$, since $\Dom(T_\om)$ is dense in $H^p$. Thus it suffices to show that $T_{z^\kappa}T_\om f = T_\om T_{z^\kappa}f$ for all $f=z^{-\kappa} g\in z^{-\kappa} \Dom(T_\om)$; note that by the last claim of Lemma \ref{L:domain} we have $z^{-\kappa} \Dom(T_\om)\subset \Dom(T_\om)$ since $\kappa<0$.

Thus, let $f=z^{-\kappa} g\in z^{-\kappa} \Dom(T_\om)$, say $s g= q h+r$ with $h\in H^p$ and $\deg(r)<\deg(q)$. Note that $T_{z^\kappa} f= g$, so that $T_\om T_{z^\kappa}f=T_{\om} g=h$. On the other hand, $sf= q z^{-\kappa} h  + z^{-\kappa}r =q (z^{-\kappa} h+r_2)+r_1$, where $r_2,r_1\in \cP$, $\deg(r_2)<-\kappa$, $\deg(r_1)<\deg(q)$ are such that $z^{-\kappa}r= q r_2+r_1$. Then $T_{\om} f=z^{-\kappa} h+r_2$, which shows $T_{z^{-\kappa}} T_{\om} f= \BP ( h+  z^{\kappa} r_2)= h$, since $\deg(r_2)<-\kappa$. Thus $T_{z^\kappa}T_\om f = T_\om T_{z^\kappa}f$, as claimed, and the proof is complete.
\end{proof}

\begin{proof}[\textbf{Proof of the Fredholm claim of Theorem \ref{T:Mainthm1}}]
Let $\om = s/q\in\Rat(\BT)$ where $s$ and $q$ are coprime. Put $\om = \om_- z^\kappa\om_0 \om_+$ as in Lemma \ref{L:factor} above, where $\om_+$ has all its poles and zeroes outside $\overline{\BD}$, $\om_0 \in \Rat(\BT)$ and $\om_-$ has all its poles and zeroes in $\mathbb{D}$. Then $T_\om = T_{\om_-} T_{z^\kappa\om_0} T_{\om_+}$. Clearly $T_{\om_-}$ and $T_{\om_+}$ are boundedly invertible, so $T_\om$ is Fredholm if and only if $T_{z^\kappa\om_0} $ is Fredholm.

If $\kappa \geq 0$ it follows from Corollary \ref{C:Fredholm} that $T_{z^\kappa\om_0}$ is Fredholm if and only if $\om_0$ has no zeroes on $\BT$. This proves the Fredholm claim of Theorem \ref{T:Mainthm1} for the case where $\kappa \geq 0$.

Now let $\kappa <0$. Then $T_{z^\kappa\om_0}=T_{z^\kappa}T_{\om_0}$, by Lemma \ref{L:KappaNeg}. Suppose first that $\om_0$ has no zeroes on $\BT$, so that $T_{\om_0}$ is Fredholm by Corollary \ref{C:Fredholm}. As $T_{z^\kappa}$ is Fredholm as well, $T_{z^\kappa}T_{\om_0}$ is Fredholm.
Conversely, assume $T_{z^\kappa\om_0}$ is Fredholm. Then $T_{z^\kappa}T_{\om_0}=T_{z^\kappa\om_0}$ is Fredholm, and thus  $T_{\om_0}T_{z^\kappa}$ is Fredholm, again using Lemma \ref{L:KappaNeg}. Now $T_{z^\kappa}T_{z^{-\kappa}}=I$, and $T_{z^{-\kappa}}$ is Fredholm. Hence (see \cite{G66}, Theorem IV.2.7) $T_{\om_0}T_{z^\kappa}T_{z^{-\kappa}}=T_{\om_0}$ is Fredholm. By Corollary \ref{C:Fredholm} again, this implies that $\om_0$ has no zeroes on $\BT$, and hence also $\om$ has no zeroes on $\BT$.
\end{proof}

For $\omega\in L^\infty$ we have the following result by L. A. Coburn (see \cite{BS06} Theorem 2.38): {\em
If $\omega\in L^\infty$ and $\omega$ does not vanish identically then either the kernel of $T_\omega$ in $H^p$ is trivial or $T_\omega$ has dense range in $H^p$.}

For $\omega\in\textup{Rat}$ with poles in $\mathbb{T}$ the theorem of Coburn does not hold in full generality but we do have the following, which also proves the second part of Theorem \ref{T:Mainthm1}, i.e., the statement on the index.

\begin{theorem}\label{T:index}
Let $\om\in\Rat$ with possibly poles on $\BT$. If $T_\om$ is Fredholm then
\[
\Index (T_\om) = \sharp \left\{\begin{array}{l}\!\!\!
 \textrm{poles of } \om \textrm{ in }\overline{\BD} \textrm{ multi.}\!\!\! \\
\!\!\!\textrm{taken into account}\!\!\!
\end{array}\right\}  -
\sharp \left\{\begin{array}{l}\!\!\! \textrm{zeroes of } \om\textrm{ in }\BD  \textrm{ multi.}\!\!\! \\
\!\!\!\textrm{taken into account}\!\!\!
\end{array}\right\} .
\]
and $T_\om$ is invertible if and only if $\Index (T_\om) = 0$, i.e.
\[
\sharp \left\{\begin{array}{l}\!\!\!
 \textrm{poles of } \om\textrm{ in }\overline{\BD} \textrm{ multi.}\!\!\! \\
\!\!\!\textrm{taken into account}\!\!\!
\end{array}\right\}  =
\sharp \left\{\begin{array}{l}\!\!\! \textrm{zeroes of } \om\textrm{ in }\BD  \textrm{ multi.}\!\!\! \\
\!\!\!\textrm{taken into account}\!\!\!
\end{array}\right\} .
\]
Furthermore, if $T_\om$ is Fredholm then $T_\om$ is either injective or surjective.
\end{theorem}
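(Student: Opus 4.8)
The plan is to push everything through the Wiener--Hopf-type factorization $T_\om = T_{\om_-}T_{z^\kappa\om_0}T_{\om_+}$ of Lemma \ref{L:factor}, in which $T_{\om_-}$ and $T_{\om_+}$ are boundedly invertible. Consequently all Fredholm data of $T_\om$ -- Fredholmness, index, $\dim\ker$, $\codim\Ran$ -- coincide with those of $T_{z^\kappa\om_0}$, and the theorem reduces to a statement about $z^\kappa\om_0$ with $\om_0\in\Rat(\BT)$, for which Section \ref{S:Fredholm1} is available. I would first record that, $T_\om$ being Fredholm, the already-proved Fredholm part of Theorem \ref{T:Mainthm1} gives that $\om$, hence $\om_0$, has no zeroes on $\BT$; writing $\om_0 = s_0/q_0$ as in Lemma \ref{L:factor}, this forces $s_0$ to be a constant, so $\om_0$ has no (finite) zeroes at all and exactly $\deg q_0$ poles, all on $\BT$.

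For the index formula I would split on the sign of $\kappa$. If $\kappa\ge 0$, then $z^\kappa\om_0\in\Rat(\BT)$ has $\deg q_0$ poles and $\kappa$ zeroes in $\BD$ (all at the origin), so Corollary \ref{C:Fredholm} yields $\Index T_{z^\kappa\om_0} = \deg q_0-\kappa$. If $\kappa<0$, then $T_{z^\kappa\om_0}=T_{z^\kappa}T_{\om_0}$ by Lemma \ref{L:KappaNeg}, where $T_{z^\kappa}$ is the $(-\kappa)$-fold backward shift, surjective with $\dim\ker=-\kappa$, while $\Index T_{\om_0}=\deg q_0$ by Corollary \ref{C:Fredholm}; then either additivity of the index under composition (\cite{G66}, Theorem IV.2.7) or a direct computation of $\ker(T_{z^\kappa}T_{\om_0})$ and $\Ran(T_{z^\kappa}T_{\om_0})$ gives $\Index T_{z^\kappa\om_0}=\deg q_0-\kappa$ once more. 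Finally I would translate $\deg q_0-\kappa$ back into pole/zero counts of $\om$: since $\kappa=l^+-l^-$ with $l^+$ (resp. $l^-$) the number of zeroes (resp. poles) of $\om$ in $\BD$, since the powers of $z$ in $\om_-$ and in $z^\kappa\om_0$ cancel so that $\om$ has no pole or zero at the origin, and since the poles of $\om$ in $\overline\BD$ are precisely its $l^-$ poles in $\BD$ together with the $\deg q_0$ roots of $q_0$ on $\BT$ while its zeroes in $\BD$ are the $l^+$ roots of $s_-$, we obtain $\deg q_0-\kappa=(l^-+\deg q_0)-l^+$, which is the asserted formula.

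For the dichotomy ``injective or surjective'' I would again reduce to $T_{z^\kappa\om_0}$. When $\kappa\ge 0$ this is exactly Corollary \ref{C:InjSurj} applied to $z^\kappa\om_0\in\Rat(\BT)$. When $\kappa<0$, write $T_{z^\kappa\om_0}=T_{z^\kappa}T_{\om_0}$; since $\om_0$ has no zeroes in $\overline\BD$, Corollary \ref{C:RanCodim} gives $\codim\Ran T_{\om_0}=0$, so $T_{\om_0}$ is onto, and $T_{z^\kappa}$ is onto, whence $T_{z^\kappa\om_0}$ is onto. Thus $T_\om$ is always injective or surjective. The invertibility criterion then follows formally: if $\Index T_\om=0$ and $T_\om$ is injective, then $\codim\Ran T_\om=-\Index T_\om=0$, so $T_\om$ is bijective, and dually if it is surjective; a closed (Proposition \ref{P:welldefclosed}) bijection of $H^p$ onto $H^p$ is boundedly invertible by the closed graph theorem, and the converse is trivial.

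The main obstacle I expect is bookkeeping rather than anything deep: one must verify carefully that the powers of $z$ in $\om_-$ and in $z^\kappa\om_0$ cancel, so that $\om$ acquires no spurious pole or zero at $0$ and the count ``$\deg q_0-\kappa$'' matches the pole/zero count of $\om$; and in the case $\kappa<0$ one must handle the composition $T_{z^\kappa}T_{\om_0}$ of a bounded operator with an unbounded closed Fredholm operator with some care -- confirming that its domain is $\Dom T_{\om_0}$ and that the index-of-a-product statement applies in this mixed bounded/unbounded setting. Everything else is a direct appeal to Lemmas \ref{L:factor} and \ref{L:KappaNeg} and the corollaries of Section \ref{S:Fredholm1}.
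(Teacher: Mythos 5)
Your proposal is correct and follows essentially the same route as the paper: reduce to $T_{z^\kappa\om_0}$ via the factorization of Lemma \ref{L:factor}, split on the sign of $\kappa$, invoke Corollary \ref{C:Fredholm} (and Corollary \ref{C:InjSurj}) when $\kappa\ge 0$, and use Lemma \ref{L:KappaNeg} together with index additivity when $\kappa<0$. The one subtlety you flag -- applying the index-of-a-product theorem to the mixed composition $T_{z^\kappa}T_{\om_0}$ -- is resolved in the paper exactly by the second half of Lemma \ref{L:KappaNeg}: one passes to $T_{\om_0}T_{z^\kappa}$, which differs from $T_{z^\kappa}T_{\om_0}$ by a bounded finite-rank operator, so the index is preserved and Goldberg's composition theorem applies in the form where the bounded operator is applied first.
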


\begin{proof}[\bf Proof]
If $T_\om$ is Fredholm then $\om$ has no zeroes on $\BT$. Let $\om = \om_-z^\kappa \om_0 \om_+$ be the factorization of $\om$ as in Lemma \ref{L:factor}. Then $T_\om=T_{\om_-} T_{z^\kappa\om_0}T_{\om_+}$.
As $T_{\om_-}$ and $T_{\om_+}$ are bounded and invertible, it follows that $\Index (T_\om)=\Index (T_{z^\kappa\om_0})$.
If $\kappa \geq 0$ then by Corollary \ref{C:Fredholm}
$$
\Index (T_{z^\kappa\om_0})=
\sharp\left\{\begin{array}{l}\!\!\!
\textrm{poles of }\omega_0 \textrm{ multi.}\!\!\! \\
\!\!\!\textrm{taken into account}\!\!\!
\end{array}\right\} -\kappa
.
$$
Since $\kappa=l^+-l^-$ is the difference between the number of zeroes of $\om$ in the open unit disc and the number of poles of $\om$ in the open unit disc we see that
$$
\Index (T_\om) =  \sharp \left\{\begin{array}{l}\!\!\!
 \textrm{poles of } \om\textrm{ in }\overline{\BD} \textrm{ multi.}\!\!\! \\
\!\!\!\textrm{taken into account}\!\!\!
\end{array}\right\}  -
\sharp \left\{\begin{array}{l}\!\!\! \textrm{zeroes of } \om\textrm{ in }\BD  \textrm{ multi.}\!\!\! \\
\!\!\!\textrm{taken into account}\!\!\!
\end{array}\right\}
$$
as stated.

If $\kappa <0$ then, as observed in the proof of the previous lemma, $T_{z^\kappa\om_0} = T_{z^\kappa}T_{\om_0}= T_{\om_0} T_{z^\kappa}+\Psi$ for some bounded $\Psi$ of finite rank. By \cite{G66}, Theorem V.2.1 we have
$
\Index (T_{z^\kappa\om_0})  =\Index (T_{\om_0} T_{z^\kappa})
$,
and by \cite{G66}, Theorem IV.2.7 this is equal to $\Index (T_{\om_0}) +\Index (T_{z^\kappa})=
\Index (T_{\om_0}) -\kappa$.
From here on the proof is the same as in the case $\kappa \geq 0$.

Clearly, in case $T_\om$ is Fredholm, $T_\om$ is injective if and only if $T_{z^\kappa \om_0}$ is injective, and similarly $T_\om$ is surjective if and only if $T_{z^\kappa\om_0}$ is surjective. In case $\kappa \geq 0$ then $z^\kappa\om_0\in\Rat(\BT)$ and from Corollary \ref{C:InjSurj} it follows that $T_\om$ is either injective or surjective. On the other hand let $\kappa < 0$. For $h\in H^p$, $f=(z^\kappa\om_0)^{-1}h\in\Dom(T_{z^\kappa\om_0})$ (recall that $\om_0$ has no zeroes on $\BT$ as $T_\om$ is Fredholm) with $T_{z^\kappa\om_0}f = h$ showing that $T_{z^\kappa\om_0}$ is surjective. In addition, $T_{z^\kappa\om_0}$ is not injective as $\{z^j:j < M\}\subset \kernel(T_{z^\kappa\om_0})$ where $M = \kappa + \sharp\{\textrm{poles of } \om_0\}$.
\end{proof}

We conclude this section with a characterization of invertibility of $T_\om$ and a formula for the inverse when it exists. Here invertibility means that $T_\om$ is bijective, so that the inverse is bounded.
The classical result for continuous symbols is that $T_\om$ is invertible if and only if $\om$ has no zeroes on $\BT$ and $T_\om$ is Fredholm of index zero; the inverse is then provided using the factors in the Wiener-Hopf factorization \cite[Theorem XVI.2.2]{GGK02}.

\begin{proposition}\label{P:inverse}
Let $\om\in\Rat$ with at least one pole on $\BT$ and let $\kappa$ be the difference between the number of zeroes of $\om$ in $\BD$ and the number of poles of $\om$ in $\BD$. Then $T_\om$ is invertible if and only if $\om$ has no zeroes on $\BT$ and
$\kappa$ is also equal to the number of poles of $\om$ on $\BT$.
In that case $\om$ factorizes as
$$
\om(z)=\om_-(z)\frac{z^\kappa}{q_0(z)}\om_+(z),
$$
where $\om_-$ has no poles or zeroes outside $\BD$, $\om_+$ has no poles or zeroes inside
$\overline{\BD}$ and $q_0$ is a polynomial of degree $\kappa$ with all its roots on $\BT$, and moreover,
$$
T_\om^{-1}=T_{\om_+^{-1}}T_{\frac{q_0}{z^\kappa}}T_{\om_-^{-1}}.
$$
\end{proposition}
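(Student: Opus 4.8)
The plan is to read off invertibility from the Fredholm and index results already obtained, together with the factorization of Lemma~\ref{L:factor}, and then to invert the three factors one at a time.

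First I would settle the criterion. An invertible operator is Fredholm of index $0$, and conversely, if $T_\om$ is Fredholm then by the last assertion of Theorem~\ref{T:index} it is injective or surjective, so $\Index(T_\om)=0$ forces it to be bijective; a bijective closed operator has bounded inverse, so $T_\om$ is then invertible. Hence $T_\om$ is invertible if and only if it is Fredholm with $\Index(T_\om)=0$, which by Theorem~\ref{T:Mainthm1} means that $\om$ has no zeroes on $\BT$ and the number of poles of $\om$ in $\overline{\BD}$ equals the number of zeroes of $\om$ in $\BD$. Writing $l^\pm$ for the number of zeroes, respectively poles, of $\om$ in $\BD$, and $m$ for the number of poles of $\om$ on $\BT$, the index condition reads $l^-+m=l^+$, i.e. $\kappa=l^+-l^-=m$; this is exactly the stated condition that $\kappa$ equal the number of poles of $\om$ on $\BT$. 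Since $\om$ has at least one pole on $\BT$ we then have $\kappa=m\geq 1$.

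Next I would produce the factorization. Apply Lemma~\ref{L:factor} to get $\om=\om_-(z^\kappa\om_0)\om_+$ with $\om_0=s_0/q_0\in\Rat(\BT)$, where $q_0$ carries the poles of $\om$ on $\BT$ (so $\deg q_0=m$) and $s_0$ carries the zeroes of $\om$ on $\BT$. Since $\om$ has no zeroes on $\BT$, $s_0$ is a nonzero constant, which may be absorbed into $\om_-$ by the uniqueness up to a multiplicative constant in Lemma~\ref{L:factor}, and $m=\kappa$ by the criterion; this gives $\om=\om_-\,\frac{z^\kappa}{q_0}\,\om_+$ with $q_0$ of degree $\kappa$ and all its roots on $\BT$, and $T_\om=T_{\om_-}T_{z^\kappa/q_0}T_{\om_+}$ by Lemma~\ref{L:factor}. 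As $T_{\om_-}$ and $T_{\om_+}$ are boundedly invertible with $T_{\om_\pm}^{-1}=T_{\om_\pm^{-1}}$ (noted right after Lemma~\ref{L:factor}), the formula $T_\om^{-1}=T_{\om_+^{-1}}T_{q_0/z^\kappa}T_{\om_-^{-1}}$ will follow once I show $T_{z^\kappa/q_0}^{-1}=T_{q_0/z^\kappa}$.

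For that last point I would apply Theorem~\ref{T:Mainthm1a} to $z^\kappa/q_0\in\Rat(\BT)$ (here $s=z^\kappa=s_-$, $s_+=1$, so $\deg s_-=\kappa=\deg q_0$): $T_{z^\kappa/q_0}$ is bijective with domain $q_0H^p+\cP_{\kappa-1}$. Meanwhile $q_0/z^\kappa$ has its only pole at $0\in\BD$, hence lies in $L^\infty(\BT)$, so $T_{q_0/z^\kappa}$ is the bounded operator $g\mapsto\BP(\tfrac{q_0}{z^\kappa}g)$ defined on all of $H^p$. Since $T_{z^\kappa/q_0}$ is onto, it suffices to check that $T_{q_0/z^\kappa}$ is a left inverse: for $g=q_0h+r$ with $h\in H^p$, $r\in\cP_{\kappa-1}$, Euclidean division gives $z^\kappa r=q_0\tilde c+r'$ with $\deg\tilde c,\deg r'<\kappa$, so $\tfrac{z^\kappa}{q_0}g=(z^\kappa h+\tilde c)+\tfrac{r'}{q_0}$ and $T_{z^\kappa/q_0}g=z^\kappa h+\tilde c$; then $\tfrac{q_0}{z^\kappa}(z^\kappa h+\tilde c)=q_0h+r-\tfrac{r'}{z^\kappa}$ with $\tfrac{r'}{z^\kappa}\in K^p$ since $\deg r'<\kappa$, whence $T_{q_0/z^\kappa}(z^\kappa h+\tilde c)=q_0h+r=g$. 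I expect this to be the point requiring most care: one must treat $z^\kappa/q_0$ via the $\Rat(\BT)$-machinery of Section~\ref{S:Fredholm1} (it is genuinely unbounded) while $q_0/z^\kappa$ is an ordinary bounded Toeplitz operator, and the matching degree $\deg q_0=\kappa$ is exactly what makes the Euclidean division close up and the leftover term land in $K^p$.
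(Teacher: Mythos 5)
Your proposal is correct and follows essentially the same route as the paper: reduce to the middle factor $T_{z^\kappa/q_0}$ via Lemma~\ref{L:factor}, read off the invertibility criterion from the Fredholm index being zero together with the injective-or-surjective dichotomy, and verify the inverse formula by an explicit Euclidean division. The only (immaterial) difference is that you check $T_{q_0/z^\kappa}$ is a left inverse on $\Dom(T_{z^\kappa/q_0})=q_0H^p+\cP_{\kappa-1}$, whereas the paper checks the right-inverse identity $T_{z^\kappa/q_0}T_{q_0/z^\kappa}f=f$ for $f\in H^p$; either suffices once bijectivity is known.
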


\begin{proof}[\bf Proof.]
Let $\om=\om_-(z^\kappa\om_0)\om_+$ be the factorization of
Lemma \ref{L:factor}. Then $T_{\om_-}$ and $T_{\om_+}$ are invertible with
inverses $T_{\om_-^{-1}}$ and $T_{\om_+^{-1}}$, and thus it is seen that the inverse of
$T_\om$ exists
if and only if the inverse of $T_{z^\kappa \om_0}$ exists. Since an invertible operator is certainly Fredholm with index zero, it follows that $\om_0$ has no zeroes on $\BT$, and so $\om_0=
1/q_0$ for some polynomial $q_0$ with roots only on the unit circle. The Fredholm index being zero implies that $\kappa=\deg(q_0)$. Conversely, if $\om_0$ is of this form, then $T_{z^\kappa \om_0}$ is one-to-one by Corollary \ref{C:kerdim} and onto by Corollary \ref{C:RanCodim}.

It remains to show the formula for the inverse, and here too it suffices to show that
$$
T_{\frac{z^\kappa}{q_0}}^{-1}= T_{\frac{q_0}{z^\kappa}}.
$$
Note that $T_{\frac{q_0}{z^\kappa}}$ is a bounded operator. To show that this is the inverse
of $T_{\frac{z^\kappa}{q_0}}$, let $f\in H^p$ and write $q_0(z)f(z)=z^\kappa h(z)+r(z)$ where
$h\in H^p$ and $r$ is a polynomial with $\deg(r) <\kappa$. Then $T_{\frac{q_0}{z^\kappa}}f=h$,
and on the other hand, from $q_0(z)f(z)=z^\kappa h(z)+r(z)$ we have that $h\in \Dom (T_{\frac{z^\kappa}{q_0}})$ with $T_{\frac{z^\kappa}{q_0}} h=f$.
\end{proof}

\section{Matrix representation}\label{S:matrix}

For $n\in\BZ$, let $e_n$ be the function $e_n(z) = z^n, z\in\mathbb{T}$. Then $\{e_n\}_{n = 0}^\infty$ is the standard basis for $H^p$. Where convenient, we shall denote $e_n$ simply by $z^n$.


Now let $\omega\in\Rat$ with possibly poles on $\mathbb{T}$. Since the polynomials $\cP$ are contained in the domain of the closed operator $T_\om$ defined in \eqref{Toeplitz}, by inspecting the action of $T_\om$ on the monomials $z^n$ and expressing the result as a power series, it is possible to determine a matrix representation $[T_\omega]$ of the operator $T_\om(H^p\to H^p)$ with respect to the basis $\{e_n\}_{n = 0}^\infty$.

In this section we shall prove Theorem  \ref{T:Mainthm2}, which states that this matrix representation $[T_\om]$ has a Toeplitz structure, i.e., $[T_\om] = [a_{m-n}]_{m,n=0}^\infty$ for some sequence $(a_n)_{n\in\mathbb{Z}}$. Here $a_n$ has a polynomial bound, $a_n = O(n^j)$ for some $j\in\mathbb{N}$.

We first prove the following lemma, which is an explicit formulation of the Euclidean algorithm for
dividing $z^N - 1$  by $(z-1)^m$, where $m < N$.

\begin{lemma}\label{L:division}
For any natural number $N$ and any $m<N$
\begin{equation}\label{special_euclid}
z^N-1=(z-1)^m \sum_{i=0}^{N-m}
\begin{pmatrix} i+m-1 \\ m-1 \end{pmatrix}
z^{N-m-i} +
\sum_{j=0}^{m-1}\begin{pmatrix} N \\ j \end{pmatrix}  (z-1)^j .
\end{equation}
\end{lemma}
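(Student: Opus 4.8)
The plan is to pass to the shifted variable and reduce the identity to the binomial theorem together with a single Vandermonde-type convolution. Put $w=z-1$. By the binomial theorem $z^N-1=(1+w)^N-1=\sum_{k=1}^{N}\binom{N}{k}w^{k}$, and the last sum on the right of \eqref{special_euclid} is already displayed as $\sum_{j=0}^{m-1}\binom{N}{j}w^{j}$. Thus the whole identity is equivalent to proving, for the quotient factor,
\begin{equation}\label{quot}
\sum_{i=0}^{N-m}\binom{i+m-1}{m-1}(1+w)^{N-m-i}=\sum_{k=0}^{N-m}\binom{N}{k+m}w^{k}.
\end{equation}
Granting \eqref{quot}, multiply it by $w^{m}$ to get $\sum_{j=m}^{N}\binom{N}{j}w^{j}$; adding the remainder sum then reassembles $\sum_{j=0}^{N}\binom{N}{j}w^{j}=(1+w)^{N}$, and \eqref{special_euclid} follows once the low-degree terms are matched up. (The only delicate point is the constant term: since $\binom{N}{0}(z-1)^{0}=1$, the tidy way to bookkeep is to prove first the companion identity with $z^{N}$ in place of $z^{N}-1$, whose remainder is exactly $\sum_{j=0}^{m-1}\binom{N}{j}(z-1)^{j}$, and then subtract $1$ from both sides.)

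To establish \eqref{quot} I would expand each $(1+w)^{N-m-i}$ by the binomial theorem and read off the coefficient of $w^{k}$ on the left. Because $\binom{N-m-i}{k}=0$ for $i>N-m-k$, that coefficient is $\sum_{i=0}^{N-m-k}\binom{i+m-1}{m-1}\binom{N-m-i}{k}$, so \eqref{quot} becomes the convolution identity
\[
\sum_{i=0}^{N-m-k}\binom{i+m-1}{m-1}\binom{N-m-i}{k}=\binom{N}{m+k},\qquad 0\le k\le N-m.
\]
This is quickest by generating functions: $\binom{i+m-1}{m-1}=[x^{i}]\,(1-x)^{-m}$, and, rewriting $\binom{N-m-i}{k}=\binom{N-m-i}{\,N-m-i-k\,}=[x^{\,N-m-i-k}]\,(1-x)^{-(k+1)}$, the left-hand side equals the coefficient of $x^{\,N-m-k}$ in $(1-x)^{-(m+k+1)}$, namely $\binom{N}{m+k}$. (Alternatively, substitute $\binom{i+m-1}{m-1}=\binom{i+m-1}{i}$ and invoke Chu--Vandermonde, or prove the convolution by a short induction on $m$ via Pascal's rule.)

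A generating-function-free alternative is a direct induction on $m$ in \eqref{special_euclid}. The base case $m=1$ is the telescoping identity for $(z^{N}-1)/(z-1)$. For the inductive step one checks that the quotient polynomial $Q_m(z)=\sum_{i=0}^{N-m}\binom{i+m-1}{m-1}z^{N-m-i}$ satisfies $Q_m(z)=(z-1)\,Q_{m+1}(z)+\binom{N}{m}$; after cancelling equal powers of $z$ this collapses precisely to Pascal's rule $\binom{i+m}{m}-\binom{i+m-1}{m}=\binom{i+m-1}{m-1}$ together with the two boundary checks at $i=0$ and $i=N-m$. Substituting $(z-1)^{m}Q_m=(z-1)^{m+1}Q_{m+1}+\binom{N}{m}(z-1)^{m}$ back into \eqref{special_euclid} then upgrades case $m$ to case $m+1$. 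Either way, I expect the main (and essentially the only) obstacle to be the index bookkeeping at the ends of the summation ranges and keeping straight which side the constant term lands on; the binomial and convolution identities themselves are all standard.
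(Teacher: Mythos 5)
Your proposal is correct, and your ``generating-function-free alternative'' is in fact precisely the paper's own proof: the paper argues by induction on $m$, with base case the telescoping identity for $(z^N-1)/(z-1)$ and inductive step the relation $Q_m(z)=(z-1)Q_{m+1}(z)+\binom{N}{m}$ for the quotient polynomial, verified via the hockey-stick identity for the constant term and Pascal's rule for the coefficients of $z^k$. Your primary route --- substituting $w=z-1$, expanding by the binomial theorem, and reducing the quotient identity to the convolution $\sum_{i=0}^{N-m-k}\binom{i+m-1}{m-1}\binom{N-m-i}{k}=\binom{N}{m+k}$ --- is genuinely different and arguably cleaner: it produces quotient and remainder in one stroke and isolates the combinatorial content as a single negative-binomial (Chu--Vandermonde) convolution, at the cost of invoking generating functions where the paper needs only Pascal's rule. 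One point you relegate to a bookkeeping aside deserves to be said out loud: your computation shows that the right-hand side of \eqref{special_euclid} as printed sums to $(1+w)^N=z^N$, not $z^N-1$ (check $m=1$: $(z-1)(z^{N-1}+\cdots+1)+1=z^N$). So the identity as displayed is off by the constant term $\binom{N}{0}(z-1)^0=1$; either the left-hand side should read $z^N$ or the remainder sum should start at $j=1$. Your device of proving the companion identity for $z^N$ and then adjusting the constant is exactly the right fix, and it is the $z^N$ form that the paper actually uses in its later applications (the example with $\omega(z)=(z-1)^{-m}$ and Proposition \ref{P:matrix}).
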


\begin{proof}[\bf Proof]
The proof is by induction on $m$. The case $m=1$ is just the well-known formula
$$
z^N-1=(z-1)(z^{N-1} + z^{N-2}+\cdots + z +1).
$$
For $m>1$, to show that \eqref{special_euclid} holds, we have to prove the following:
\begin{equation}\label{to_show}
\sum_{i=0}^{N-m}\!
\begin{pmatrix} i+m-1 \\ m-1 \end{pmatrix}
z^{N-m-i}= (z -1)\! \! \! \sum_{i=0}^{N-m-1}\!
\begin{pmatrix} i+m \\ m \end{pmatrix}
z^{N-m-i-1} +\begin{pmatrix} N \\ m \end{pmatrix}.
\end{equation}
To see this, we shall make use of the so-called hockey-stick formula, which implies that
$\displaystyle\sum_{i=0}^{N-m} \begin{pmatrix} i+m-1 \\ m-1 \end{pmatrix}  =\begin{pmatrix} N \\ m \end{pmatrix}$. Thus, the right hand side of \eqref{to_show} is equal to
\begin{equation}\label{first_step}
 (z -1)\sum_{i=0}^{N-m-1}
\begin{pmatrix} i+m \\ m \end{pmatrix}
z^{N-m-i-1} +\sum_{i=0}^{N-m} \begin{pmatrix} i+m-1 \\ m-1 \end{pmatrix}.
\end{equation}
Note that the remainder of $\displaystyle\sum_{i=0}^{N-m}
\begin{pmatrix} i+m-1 \\ k-1 \end{pmatrix}
z^{N-m-i}$ upon dividing by $z -1$ is equal to $\displaystyle\sum_{i=0}^{N-m} \begin{pmatrix} i+m-1 \\ k-1 \end{pmatrix}$, so the remainder term in \eqref{to_show} is correct.

To finish the proof it remains to compare the coefficients of $z^k$ on the left and right hand sides of
\eqref{to_show} with $k\geq 1$. A straightforward rewriting of the right hand side shows that the equality \eqref{to_show} follows from the basic property of binomial coefficients.
\end{proof}


\begin{example}
Let $\om (z) = (z - 1)^{-m}$, i.e., $s\equiv 1$ and $q(z)=(z - 1)^{m}$. From Proposition \ref{P:welldefclosed} we know $qH^p + \mathcal{P}_{m-1} \subset \Dom (T_\omega)$ which contains all the polynomials. Put
$$a_{-i} = \begin{pmatrix} i+m-1 \\ m-1 \end{pmatrix}, i=0,1,2,\dots $$ and
\[
b_{-j} =
\left\{ \begin{array}{ll}
0 & \qquad j < m \\
a_{m-j} & \qquad j \geq m
\end{array} \right . .
\]
From Lemma \ref{L:division} above, for $N > m$ we can write
\[
\begin{array}{ll}
z^N   & = \displaystyle(z - 1)^m \sum_{i=0}^{N-m}\begin{pmatrix} i+m-1 \\ m-1 \end{pmatrix}z^{N-m-i} + \sum_{j=0}^{m-1}\begin{pmatrix} N \\ j \end{pmatrix}  (z-1)^j + 1 \\
& = \displaystyle q(z) \sum_{i=0}^{N-m}a_{-i}z^{N-m-i} + \sum_{j=0}^{m-1}\begin{pmatrix} N \\ j \end{pmatrix}  (z-1)^j + 1 \\
& = \displaystyle q(z)\sum_{j=0}^{N-m}a_{-(N-m-j)}z^j + \sum_{j=0}^{m-1}\begin{pmatrix} N \\ j \end{pmatrix}  (z-1)^j + 1.
\end{array}
\]
Put $$r(z) = \sum_{j=0}^{m-1}\begin{pmatrix} N \\ j \end{pmatrix}  (z-1)^j + 1.$$
Then $r$ is a polynomial with $\deg (r) < m = \deg q$ and since $s\equiv 1$,
\[
s(z)z^N = q(z)\sum_{j=0}^{N-m}a_{-(N-m-j)}z^j + r
\]
and so from Lemma \ref{L:domain}, for $N>m$ we have
\[
T_\om z^N = \sum_{j=0}^{N-m}a_{-(N-m-j)}z^j = \sum_{j=0}^{N-m}b_{-(N-j)}z^j = \sum_{j=0}^{N}b_{-(N-j)}z^j
\]
since $b_{-j} = 0$ for $j < m$. From Lemma \ref{L:kernel2} we have  $\kernel(T_\om) = \{z^j, j < m \}$ and so the matrix representation of $T_\om$ will be an upper triangular Toeplitz matrix with the first $m$-columns zero.
\end{example}

\begin{proposition}\label{P:matrix}
Let $\om\in\Rat_0(\BT)$, say $\om=s/q$ with  $s,q\in\cP$ co-prime, $q$ having all its roots in $\BT$ and $\deg(s) = n < m = \deg(q)$. Then, for $N \geq m-n$,
$$ T_\omega z^N = \sum_{j=1}^{N-m+n+1} a_{-j} z^{N-m+n+1-j}$$
where $a_{-j} = O(j^{M-1})$ with $M$ the maximum of the orders of the poles of $\om$ on $\BT$. Thus the matrix representation $[T_\omega]$ of $T_\omega$ with respect to the standard basis $\{z^n\}_{n=0}^\infty$ of $H^p$ is given by
$$\begin{array}{ll}
[T_\omega] =&
\left (
\begin{array}{cccccc}
0 \cdots 0 & a_{0} & a_{-1} & a_{-2} & a_{-3} & \cdots \\
0 \cdots 0 & 0 & a_{0} & a_{-1} & a_{-2} &  \cdots \\
0 \cdots 0 & 0 & 0 & a_{0} & a_{-1} &  \cdots \\
\vdots & \multicolumn{5}{c}{\ddots} \\
\end{array}
\right )\\
&\ \ \quad \underbrace{\qquad}_{m-n}
\end{array}
$$
\end{proposition}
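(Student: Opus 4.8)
The plan is to reduce everything to the partial fraction expansion of $\om$ and then apply the explicit Euclidean division of Lemma~\ref{L:division}. Write $\om=s/q$ with $q$ having all its roots on $\BT$, say $q(z)=c\prod_{l=1}^{L}(z-\al_l)^{m_l}$ with the $\al_l\in\BT$ distinct, $\sum_{l}m_l=m$ and $M=\max_l m_l$. Since $\deg s<\deg q$, the partial fraction expansion of $\om$ has no polynomial part, so $\om=\sum_{l=1}^{L}\sum_{i=1}^{m_l}c_{l,i}(z-\al_l)^{-i}$ with every summand in $\Rat_0(\BT)$. From the definition \eqref{Toeplitz}, using that Euclidean division is additive and that a sum of functions in $\Rat_0(\BT)$ is again in $\Rat_0(\BT)$, one checks that for each $n$ one has $T_{\om}z^n=\sum_{l,i}c_{l,i}T_{(z-\al_l)^{-i}}z^n$, so $[T_\om]$ is the corresponding finite linear combination of the matrices $[T_{(z-\al_l)^{-i}}]$. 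It therefore suffices to compute $[T_{(z-\al)^{-i}}]$ for $\al\in\BT$, $1\le i\le M$, and to add up.

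\textbf{The single pole case.}
For this I would substitute $z\mapsto z/\al$ in the identity of Lemma~\ref{L:division} (taking the exponent there to be $i$) and clear denominators; since $|\al|=1$ this produces, for $N\ge i$,
\[
z^N=(z-\al)^i\sum_{k=0}^{N-i}\binom{k+i-1}{i-1}\al^{k}\,z^{N-i-k}+r_N(z),\qquad \deg r_N<i .
\]
By Lemma~\ref{L:domain} this gives $T_{(z-\al)^{-i}}z^N=\sum_{k=0}^{N-i}\binom{k+i-1}{i-1}\al^{k}z^{N-i-k}$, a polynomial of degree $N-i<N$. Its coefficient of $z^{N-i-k}$ is $\binom{k+i-1}{i-1}\al^{k}$, which depends on the row index $N-i-k$ and the column index $N$ only through the difference $-(i+k)$; hence $[T_{(z-\al)^{-i}}]$ is an upper triangular Toeplitz matrix whose entry on the $(-j)$-th diagonal equals $\binom{j-1}{i-1}\al^{\,j-i}$ for $j\ge i$ and $0$ for $0\le j<i$, so in modulus this entry is $\binom{j-1}{i-1}=O(j^{i-1})$.

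\textbf{Assembling.}
Putting the pieces together, $[T_\om]=\sum_{l,i}c_{l,i}[T_{(z-\al_l)^{-i}}]$ is a finite sum of Toeplitz matrices, hence Toeplitz, with $(-j)$-th diagonal entry $\sum_{l}\sum_{i=1}^{m_l}c_{l,i}\binom{j-1}{i-1}\al_l^{\,j-i}$; since each term is $O(j^{M-1})$ (as $i\le M$), the diagonal entry is $O(j^{M-1})$, which is the growth bound claimed. The location of the nonzero part follows from Corollary~\ref{C:Rat0Inject}, which gives $\cP_{m-n-1}\subset\kernel(T_\om)$ so the columns $0,\dots,m-n-1$ vanish, together with the observation that for $N\ge m-n$ the Euclidean division $sz^N=qh_N+r_N$ ($\deg r_N<m$) has $\deg h_N=N+n-m$ with leading coefficient the ratio of the leading coefficients of $s$ and $q$, hence nonzero, so by Lemma~\ref{L:domain} the $N$-th column is exactly the coefficient list of the degree $N+n-m$ polynomial $T_\om z^N=h_N$. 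Combined with the Toeplitz property this yields the displayed matrix and the stated formula for $T_\om z^N$.

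\textbf{Main difficulty.}
The only genuine work is the rotated form of Lemma~\ref{L:division} and the bookkeeping of which diagonal each coefficient occupies; the partial fraction reduction and the additivity of $T_{(\cdot)}$ on $\cP$ are routine. I expect the indexing --- reconciling the binomial coefficients $\binom{j-1}{i-1}$ and the diagonal labels with the labels $a_{-j}$ in the statement --- to be the fussiest point rather than a real obstacle. As an aside, the Toeplitz structure alone can also be obtained directly from the shift relation $T_{z^{-1}}T_\om T_z=T_\om$ of Proposition~\ref{P:welldefclosed}: applying it to $z^N$ and then $T_z$, and using $T_zT_{z^{-1}}g=g-g(0)$ for $g\in H^p$, gives $T_\om z^{N+1}=z\,T_\om z^N+c$ with $c\in\BC$, so $[T_\om]$ is constant along diagonals.
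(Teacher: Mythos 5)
Your proposal is correct and follows essentially the same route as the paper: partial fractions to reduce to a single pole on $\BT$, then the rotated form of Lemma~\ref{L:division} (substituting $z\mapsto z/\al$ and clearing denominators) to compute $T$ on monomials, with the zero columns supplied by Corollary~\ref{C:Rat0Inject}. The only cosmetic difference is that you reduce to the elementary fractions $(z-\al)^{-i}$ while the paper reduces to $z^n/(z-\al)^m$, and your coefficient $\al^{k}$ in the rotated division identity is in fact the correct exponent.
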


\begin{proof}[\bf Proof]
Since $\om\in\Rat_0(\BT)$, by Corollary \ref{C:Rat0Inject} we have $\cP_{m-n-1} \subset \kernel (T_\om)$.

Suppose  $q(z) = \prod_{j=1}^t (z - \alpha_j)^{m_j}$ with $\al_j\in\BT$ the poles of $\om$ with multiplicities $m_j$. By partial fractions decomposition we can write $\omega  = \sum_{j=1}^t \omega_j$, where
$$
\omega_j (z)
 =  \frac{s_j (z)}{(z - \alpha_j)^{m_j}}, \quad \mbox{for } s_j\in\cP_{m_j-1},\ j=1\cdots t.
$$

Note that $[T_\om] = \sum_{j=1}^t [T_{\om_j}]$ and if $s_j(z) = c_0 + c_1 z + c_2 z^2 + \cdots + c_{m_j-1}z^{m_j-1}$ then $[T_{\om_j}] = \sum_{i=0}^{m_j-1} c_i[T_{z^i/(z - \alpha_j)^{m_j}}]$. From this it follows that it suffices to prove the result for $\om (z) = z^n / (z-\alpha)^m$. To this end, assume $\om = s/q$ where $s(z ) = z^n$ and $q(z ) = (z - \alpha)^m$ for some $\alpha\in\BT$.
By Lemma \ref{L:division} we have
\[
z^N = (z - 1)^m \sum_{j=0}^{N-m}
\begin{pmatrix}
j + m - 1\\
m-1
\end{pmatrix}
z^{N-m-j}
+
 \sum_{j=1}^{m-1}
\begin{pmatrix}
N\\
j
\end{pmatrix}
(z - 1)^j + 1.
\]
By replacing $z$ with $\frac{z}{\alpha}$ we can write
\[
\left(\frac{z}{\alpha} \right )^N
=\left (\frac{z}{\alpha} - 1\right )^m \sum_{j=0}^{N-m}
\begin{pmatrix}
j + m - 1\\
m-1
\end{pmatrix}
\left(\frac{z}{\alpha}\right)^{N-m-j}
+  \sum_{j=1}^{m-1}
\begin{pmatrix}
N\\
j
\end{pmatrix}
\left (\frac{z}{\alpha} - 1\right )^j + 1.
\]
Multiplying with $\alpha^N$ results in
\[
z^N
=(z - \alpha)^m \sum_{j=0}^{N-m}
\begin{pmatrix}
j + m - 1\\
m-1
\end{pmatrix}
\alpha^{j-1}z^{N-m-j}
+\sum_{j=1}^{m-1}
\begin{pmatrix}
N\\
j
\end{pmatrix}
\alpha^{N-j}(z - \alpha)^j + \alpha^N.
\]
So, for $N > m-n$,
\[
\begin{array}{ll}
s(z)z^N = z^{N+n} = & \displaystyle (z - \alpha)^m \sum_{j=0}^{N+n-m}
\begin{pmatrix}
j + m - 1\\
m-1
\end{pmatrix}
\alpha^{j-1}z^{N+n-m-j} \\
& \displaystyle + \quad \sum_{j=1}^{m-1}
\begin{pmatrix}
N+n\\
j
\end{pmatrix}
\alpha^{N+n-j}(z - \alpha)^j + \alpha^{N+n}.
\end{array}
\]
Put
\[
a_{-i} = \begin{pmatrix} i+m-1 \\ m-1 \end{pmatrix}, i=0,1,2,\dots,\quad
b_{-j} =
\left\{ \begin{array}{ll}
0 & \mbox{if $j < m - n$} \\
a_{-(j - (m-n))} & \mbox{if $j \geq m - n$}
\end{array} \right .
\]
and
\[
r(z) = \sum_{j=1}^{m-1}
\begin{pmatrix}
N+n\\
j
\end{pmatrix}
\alpha^{N+n-j}(z - \alpha)^j + \alpha^{N+n}.
\]
Then $\deg (r) < m = \deg (q)$ and
\[
\begin{array}{ll}
s(z)z^N & = q(z) \displaystyle \sum_{j=0}^{N+n-m}
\begin{pmatrix}
j + m - 1\\
m-1
\end{pmatrix}
\alpha^{j-1}z^{N+n-m-j} + r(z) \\
& = q(z ) \displaystyle \sum_{j=0}^{N+n-m}
a_{-j}z^{N+n-m-j} + r(z)\\
& = q(z)\displaystyle \sum_{j=0}^{N+n-m}
a_{-((N+n)-m-j)}z^{j} + r(z)\\
& = q(z)\displaystyle \sum_{j=0}^{N+n-m}
b_{-(N-j)}z^{j} + r(z)
\end{array}
\]
from which it follows that
\[
T_\om z^N = \sum_{j=0}^{N+n-m} b_{-(N-j)}z^{j} = \sum_{j=0}^{N}
b_{-(N-j)}z^{j}
\]
as $b_{-j} = 0$ for $j < m-n$. Thus the matrix representation of $T_\om$ is given by
\[
\begin{array}{l}
\left (
\begin{array}{cccccc}
0 \cdots 0 & a_{0} & a_{-1} & a_{-2} & a_{-3} & \cdots \\
0 \cdots 0 & 0 & a_{0} & a_{-1} & a_{-2} &  \cdots \\
0 \cdots 0 & 0 & 0 & a_{0} & a_{-1} &  \cdots \\
\vdots & \multicolumn{5}{c}{\ddots} \\
\end{array}
\right )\\
\qquad \underbrace{\qquad}_{m - n}
\end{array}.
\]
Since
\[
a_{-j} = \begin{pmatrix}
j + m - 1\\
m-1
\end{pmatrix} = \frac{(j+m-1)(j+m-2)\cdots j}{(m-1)!} \leq \frac{(j+m-1)^{m-1}}{(m-1)!}
\]
we have $a_{-j} = O(j^{m-1})$.
\end{proof}

\begin{proof}[\textbf{Proof of Theorem \ref{T:Mainthm2}}]
Let $\omega =s/q\in \Rat$ with $s,q\in\cP$ co-prime and $q$ having a root on $\BT$. By Lemma  \ref{L:reducRatT}, $\om$ can be written uniquely as $\om = \om_0 + \om_1$ with $\om_0\in\Rat_0(\BT)$ and $\om_1\in\Rat$ with no poles on $\BT$. In particular, $\om_1\in  L^\infty(\BT)$. Then $[T_\om] = [T_{\om_0}] + [T_{\om_1}]$ and $[T_{\om_0}]$ is as in Proposition \ref{P:matrix}. Moreover, $[T_{\om_1}]$ has the form as in Theorem \ref{T:Mainthm2} and the Fourier coefficients of $\om_1$ are square summable. This completes the proof.
\end{proof}

\section{Examples}\label{S:Examples}

In the final section we discuss three examples.

\begin{example}\label{E:scalar1}
Let $\om(z) = (z - 1)^{-1}\in\Rat_0(\BT)$. Then $\om = s/q$ with $s \equiv 1$ and $q(z) = z - 1$.  By Theorem \ref{T:Mainthm1a} we have
\[
\Dom(T_\om) = (z-1)H^p + \BC,
\qquad \kernel(T_\om)=\cP_{0}=\BC,
\qquad \Ran (T_\om) = H^p
\]
and so $T_\om$ is Fredholm. These facts can also be shown explicitly. By Proposition \ref{P:DomRanIncl} it suffices to establish that $\Dom(T_\om) \subset (z-1)H^p$ and so consequently $H^p \subset \Ran(T_\om)$. To this end let  $g\in\Dom(T_\om)$. Then there are $h\in H^p$ and $c\in\BC$ with $\om g = (z-1)^{-1}g = h + \frac{c}{z-1}$. Then $g = (z-1)h + c$ showing that $g\in (z-1)H^p + \BC$. For $h\in H^p$ put $g = (z-1)h$, then $g\in H^p$ with $\om g = (z-1)^{-1} (z -1)h = h$, showing that $g\in \Dom(T_\om)$ and $T_\om g = h$.

That $\kernel (T_\om) = \BC$ is also easily verified directly, as for $c\in\BC$, $\om c = 0 + \frac{c}{z-1}$. Thus $T_\om c = 0$ and so $\BC\subset \kernel(T_\om)$. The converse follows from Lemma \ref{L:domain} as for $g\in\kernel(T_\om)$, $g = c$ for some $c\in\BC$.

For the matrix representation,  note that
\[
z^n - 1 = (1 + z + z^2 + \cdots + z^{n-1} )(z - 1)
\]
or equivalently
\[
(z - 1)^{-1}z^n = 1 + z + z^2 + \cdots + z^{n-1}  + (z - 1)^{-1}
\]
and so
\[
T_\om z^n = 1 + z + z^2 + \cdots + z^{n-1}.
\]
From this it follows that the matrix representation $[T_\om]$ with respect to the standard  basis $\{ z^n\}_{n=0}^\infty$ of $H^p$ is given by
\[
[T_\om] =
\left (
\begin{array}{cccccc}
0 & 1 & 1 & 1 & 1 & \cdots \\
0 & 0 & 1 & 1 & 1 & \cdots \\
0 & 0 & 0 & 1 & 1 & \cdots \\
& \vdots &  & \ddots &  &  \cdots\\
\end{array}
\right) .
\]

Let $[T_2]$ be given by
\[
[T_2]=
\left (
\begin{array}{cccccc}
0 & 0 & 0 & 0 & 0 & \cdots \\
1 & -1 & 0 & 0 & 0 & \cdots \\
0 & 1 & -1 & 0 & 0 & \cdots \\
0 & 0 & 1 & -1 & 0 & \cdots \\
& \vdots &  & \ddots &  &  \cdots\\
\end{array}
\right ) .
\]
This is
the matrix representation of $T_2 = S + P_1 - I$ where $S=T_z$ is the forward shift operator, $P_1$ the projection onto the first component and $I$ the identity operator on $H^p$. Then $T_2$ is a generalised inverse of $T_\om$ and a right-sided inverse of $T_\om$.
\end{example}

\begin{example}\label{E:scalar2}
Let $\omega (z ) = \displaystyle\frac{z - \alpha}{z - 1}\in\Rat(\BT)$ for $\alpha\in\mathbb{C}$, $\al\neq 1$. From Lemma \ref{L:domain}
\[
\Dom (T_\om) = \left\{ g\in H^p : \om g = h + \frac{c}{z - 1}, h\in H^p, c\in\BC \right\} .
\]
For $\alpha\not\in\BT$ by Theorem \ref{T:DomRanIds},
\[
\Dom(T_\om) = (z-1)H^p+\BC, \qquad \Ran(T_\om) = (z-\alpha)H^p+\wtil{P} = (z-\alpha) H^p+\BC ,
\]
since $\wtil{P} = \{c\in\BC : c(z-1) = c_1(z-\alpha) + c_2,\quad c_1,c_2\in\BC\}=\BC$ by Lemma \ref{L:tilP}. This can be shown explicitly, and also holds for $\alpha\in\BT$. By Proposition \ref{P:DomRanIncl} it suffices to show $\Ran(T_\om) \subset (z-\alpha)H^p + \BC$. Note that for $h\in H^p$ we have $h\in \Ran(T_\om)$ if and only if there exist $g\in H^p$ and $c\in\BC$ such that
\[
\frac{z-\al}{z-1}g=h+\frac{c}{z-1},\quad\mbox{i.e.,}\quad \frac{z-1}{z-\al}h + \frac{c}{z-\al}=g\in H^p.
\]
Now use that $\frac{z - 1}{z - \alpha } = 1 + \frac{\alpha - 1}{z - \alpha}$ and $h\in H^p$, to arrive at
\begin{align*}
h\in \Ran(T_\om) \quad &\Longleftrightarrow \quad
\frac{\al-1}{z-\al}h + \frac{c}{z-\al}\in H^p,\ \
\mbox{for some $c\in\BC$}\\
&\Longleftrightarrow \quad
h + \frac{c}{\al-1}\in\frac{z-\al}{\al-1} H^p=(z-\al)H^p,\ \
\mbox{for some $c\in\BC$}\\
&\Longleftrightarrow \quad
h\in(z-\al)H^p+ \BC.
\end{align*}
So $\Ran (T_\om) = (z - \alpha)H^p + \BC$.

For the matrix representation with respect to the basis $\{z^n\}_{n=0}^\infty$, note that $\displaystyle \frac{z-\alpha}{z-1} = 1 + \frac{1-\alpha}{z-1} = 1 + c(z-1)^{-1}$ where $c = 1 - \alpha$. Then the matrix representation with respect to the standard basis $\{z^n\}_{n=0}^\infty$ of $H^p$ is given by
\[
[T_\om] =
\left (
\begin{array}{cccccc}
1 & c & c & c & c & \cdots \\
0 & 1 & c & c & c & \cdots \\
0 & 0 & 1 & c & c & \cdots \\
& \vdots &  & \ddots &  &  \cdots\\
\end{array}
\right).
\]

From Theorem \ref{T:Mainthm1}, $T_\om$ is Fredholm if and only if $\alpha\not\in\BT$, which also follows from the fact that $\Ran (T_\om) = (z - \alpha)H^p + \BC$. From Theorem \ref{T:index} $T_\om$ is invertible for $\alpha\in\BD$ and by Lemma \ref{L:kernel2},
\[
\kernel(T_\om) = \{ c/(z - \alpha) \colon c\in\BC\}
\]
in case $\alpha\not\in\BD$.

For $\alpha\in\BD$, let $T^+$ be the operator on $H^p$ with the matrix representation with respect to the standard basis $\{z^n\}_{n=0}^\infty$ of $H^p$ be given by
\[
[T^+] = \left (
\begin{array}{cccccc}
1 & -c & -c\alpha & -c\alpha^2 & -c\alpha^3 & \cdots \\
0 & 1 & -c & -c\alpha & -c\alpha^2 & \cdots \\
0 & 0 & 1 & -c & -c\alpha & \cdots \\
& \vdots &  & \ddots &  &  \cdots\\
\end{array}
\right)
\]
Then $T^+$ is the bounded inverse for $T_\om$.

For $\alpha\not\in\BD$, $T_\om$ is surjective and the operator $T^\sharp$ with the matrix representation with respect to the standard basis of $H^p$ given by
\[
[T^\sharp] = \alpha^{-1}\left (
\begin{array}{cccccc}
1 & 0 & 0 & 0 & 0 & \cdots \\
c\alpha^{-1} & 1 & 0 & 0 & 0 & \cdots \\
c(\alpha)^{-2} & c\alpha^{-1} & 1 & 0 & 0 & \cdots \\
& \vdots &  & \ddots &  &  \cdots\\
\end{array}
\right)
\]
is a right-sided inverse for $T_\om$.
\end{example}

\begin{example}\label{E:scalar3}
Let $\om (z ) = \frac{z + 1}{(z - 1)^2}\in \Rat_0(\BT)$. From Lemma \ref{L:domain}
$$\Dom (T_\om) = \left\{ g\in H^p : \om g = h + \frac{r}{(z - 1)^2}, h\in H^p, \deg (r)\leq 1 \right\}.$$
From Proposition \ref{P:DomRanIncl} we see that $(z - 1)^2H^p + \cP_1 \subset \Dom (T_\om ) $ and $(z + 1)H^p + \mathbb{C} \subset \Ran (T_\om )$. In this case $\om$ has a zero on $\BT$, so $T_\om$ is not Fredholm. Nonetheless, we will show that $(z + 1)H^p + \mathbb{C} = \Ran (T_\om )$ and $(z - 1)^2H^p + \cP_1 = \Dom (T_\om )$.

To this end, let $h\in\Ran(T_\om)$. Then there exists an $f\in H^p$ and a polynomial $r$ with $\deg (r)\leq 1$ with
\[
f = \frac{(z - 1)^2}{z + 1}h + \frac{r}{z + 1} \in H^p.
\]
Note that $\frac{(z - 1)^2}{z + 1} = z - 3 + \frac{4}{z + 1}$ and $\frac{r}{z + 1} = c_1 +  \frac{c_2}{z + 1}$ for $c_1,c_2\in\BC$. So we have
\begin{align*}
h\in\Ran (T_\omega)
& \Longleftrightarrow
(z - 3) h + \frac{4}{z + 1} h +c_1+ \frac{c_2}{z + 1}\in H^p,\ \mbox{for some $c_1,c_2\in\BC$} \\
& \Longleftrightarrow
\frac{4}{z + 1} h + \frac{c_2}{z + 1}\in H^p,\ \mbox{for some $c_2\in\BC$} \\
& \Longleftrightarrow
 h + \frac{c_2}{4}\in \frac{z+1}{4} H^p=(z+1)H^p,\ \mbox{for some $c_2\in\BC$} \\
& \Longleftrightarrow
 h \in (z+1)H^p+\BC.
\end{align*}
So $\Ran (T_\omega )= (z + 1)H^p + \mathbb{C}$. From Theorem \ref{T:Mainthm1} it follows that $T_\om$ is not Fredholm, which can also be seen directly from the fact that $\Ran(T_\om) = (z+1)H^p + \BC$ which is not closed.

To show $\Dom(T_\om) = (z-1)^2H^p + \BP_1$, let $f\in\Dom (T_\om)$ then there are $h\in H^p$ and $az + b = r \in\cP_1$ with
\[
\om f = h + \frac{az + b}{(z - 1 )^2},\quad\mbox{or equivalently,}\quad
f = \frac{(z - 1)^2}{z + 1}h + \frac{az + b}{z + 1}.
\]
Since $h\in\Ran (T_\om) = (z + 1)H^p + \BC$ there are $g\in H^p$ and $c\in\BC$ with $h = (z+1) g + c$. As $\frac{(z - 1)^2}{z + 1} =  z - 3 + \frac{4}{z + 1}$ and $\frac{az + b}{z + 1} = a + \frac{b - a}{z + 1}$, we find that
\begin{align*}
f
& = \displaystyle\frac{(z - 1)^2}{z + 1}\left((z + 1)g + c \right ) + \frac{az + b}{z + 1}\\
& = \displaystyle(z - 1)^2 g + (z - 3)c + \frac{4c}{z + 1} + a + \frac{b-a}{z + 1}.
\end{align*}
This implies that
\[
\frac{4c + b - a}{z + 1} = f - (z - 1)^2g - (cz - 3c + a) \in H^p,
\]
which, by Lemma \ref{L:h2_rat}, can only happen if $\frac{4c + b - a}{z + 1}=0$.

As $\frac{4c + b - a}{z + 1} = f - (z - 1)^2g - (cz - 3c + a) \in H^p$ from Lemma \ref{L:h2_rat} we get $\frac{4c + b - a}{z + 1} = 0$. Thus $f = (z - 1)^2g + (cz - 3c + a) \in (z - 1)^2 H^p + \cP_1$ and so $\Dom (T_\om) = (z - 1)^2 H^p + \cP_1$.

From Lemma \ref{L:kernel2} it follows that $\kernel(T_\om) = \BC$ and for the matrix representation with respect to $\{z^n\}_{n=0}^\infty$ note

\[
\frac{z + 1}{(z - 1)^2}z^k = \sum_{n=0}^{k-1} (2n + 1)z^{k-n-1} + \frac{(2k + 3)z - (2k + 1)}{(z - 1)^2}
\]
and so the matrix representation $[T_\om]$ given by
\[[T_\om] =
\left (
\begin{array}{cccccc}
0 & 1 & 3 & 5 & 7 & \cdots \\
0 & 0 & 1 & 3 & 5 & \cdots \\
0 & 0 & 0 & 1 & 3 & \cdots \\
& \vdots &  & \ddots &  &  \cdots\\
\end{array}
\right) .
\]

\end{example}

\paragraph{\bf Acknowledgement}
The present work is based on research supported in part by the National Research Foundation of South Africa. Any opinion, finding and conclusion or recommendation expressed in this material is that of the authors and the NRF does not accept any liability in this regard.

We would also like to thank the anonymous referee for the useful suggestions, corrections and comments.


\begin{thebibliography}{99}

\bibitem{BS06}
A. B\"ottcher and B. Silbermann, {\em Analysis of Toeplitz operators. Second edition}, Springer Monographs in Mathematics, Springer--Verlag, Berlin, 2006.

\bibitem{C90}
J.B. Conway, {\em A course in Functional analysis} (2nd ed), Springer, 1990.

\bibitem{DG02}
V. Dybin and G.M. Grudsky, {\em Introduction to the theory of Toeplitz operators with infinite index},  Oper.\ Theory Adv.\ Appl.\ {\bf 137}, Birkh\"auser Verlag, Basel, 2002.

\bibitem{G66}
S. Goldberg,
\newblock {\em Unbounded linear operators and applications},
\newblock Macgraw-Hill, New York, 1966.

\bibitem{GGK90}
I Gohberg, S. Goldberg, and M.A. Kaashoek, {\em Classes of linear operators.\ Vol.\ I}, Oper.\ Theory Adv.\ Appl.\ {\bf 49}, Birkh\"auser Verlag, Basel, 1990.


\bibitem{GGK02}
I Gohberg, S. Goldberg, and M.A. Kaashoek, {\em Basic classes of linear operators},  Birkh\"auser Verlag, Basel, 2002.

\bibitem{GtHJR}
G.J. Groenewald, S. ter Horst, J. Jaftha, and A.C.M. Ran, A Toeplitz-like operator with rational symbol having poles on the unit circle II: the spectrum, Oper.\ Theory Adv.\ Appl., accepted.


\bibitem{H63}
P. Hartman, On unbounded Toeplitz Matrices, {\em Am.\ J. Math} {\bf 85} (1963), 59--78.

\bibitem{HW50}
P. Hartman and A. Wintner, On the spectra of Toeplitz's Matrices, {\em Am.\ J. Math} {\bf 72}  (1950), 359--366.

\bibitem{H90}
H. Helson, Large analytic functions, In: {\em Linear Operators and Function spaces} (Timi\c{s}oara, 1988), Oper.\ Theory Adv.\ Appl.\ {\bf 43}, Birkh\"auser Verlag, Basel, 1990, 209 -- 216.


\bibitem{J91}
J. Janas,  Unbounded Toeplitz operators in the Bargmann-Segal space, {\em Studia Mathematica} {\bf 99}, 1991, no.\ 2, 87--99.

\bibitem{K68}
Y. Katznelson, {\em An introduction to Harmonic Analysis}, Dover, 1968.

\bibitem{L14}
V.V. Lebedev, On $\ell^p$-multipliers of functions analytic on the disk (Russian). {\em Funktsional.\ Anal.\ i Prilozhen.} {\bf 48} (2014), no.\ 3, 92--96; translation in:\ {\em Funct.\ Anal.\ Appl.} {\bf 48} (2014), no.\ 3, 231 -- 234.

\bibitem{R69}
J. Rovnyak,  On the theory of unbounded Toeplitz operators,
{\em Pac.\ J. Math.} {\bf 31} (1969),   481 -- 496.

\bibitem{S08}
D. Sarason,  Unbounded Toeplitz operators,
{\em Integr.\ Equ.\ Oper.\ Theory} {\bf 61} (2008), no.\ 2,  281 -- 298.

\bibitem{V08}
N.L. Vasilevski,  {\em Commutative algebras of Toeplitz operators on the Bergman space},
Oper.\ Theory Adv.\ Appl.\ {\bf 185}, Birkh\"auser Verlag, Basel, 2008.

\bibitem{V03}
D. Vukoti\'{c}, Analytic Toeplitz operators on the Hardy space $H^p$:\ a survey, {\em Bull.\ Belg.\ Math.\ Soc.\ Simon Stevin} {\bf 10} (2003), 101 -- 113.



\end{thebibliography}
\end{document}